\newtheorem{thm}{Theorem}[section]
\newtheorem{defn}[thm]{Definition}
\newtheorem{lemma}[thm]{Lemma}
\newtheorem{cor}[thm]{Corollary}
\newtheorem{remark}[thm]{Remark}
\newtheorem{example}[thm]{Example}
\newcommand{\cA}{\mathcal A}
\newcommand{\cT}{\mathcal T}
\newcommand{\bbT}{\mathbb T}
\newcommand{\bbF}{\mathbb F}
\newcommand{\bbY}{\mathbb Y}
\newcommand{\bbA}{\mathbb A}
\newcommand{\bbB}{\mathbb B}
\newcommand{\bbC}{\mathbb C}
\newcommand{\bbU}{\mathbb U}
\newcommand{\bbV}{\mathbb V}
\newcommand{\bbW}{\mathbb W}
\newcommand{\bbZ}{\mathbb Z}
\newcommand{\cZ}{\mathcal Z}
\newcommand{\cC}{\mathcal C}
\newcommand{\cP}{\mathcal P}
\newcommand{\g}{{\mathfrak{g}}}
\newcommand{\Z}{{\mathbb Z}}
\newcommand{\Q}{{\mathbb Q}}
\newcommand{\bm}{{\mathbf m}}
\newcommand{\bM}{{\mathbf M}}
\newcommand{\bx}{{\mathbf x}}
\newcommand{\by}{{\mathbf y}}
\newcommand{\bw}{{\mathbf w}}
\newcommand{\bz}{{\mathbf z}}
\newcommand{\vf}{{\mathbf f}}
\newcommand{\bg}{{\mathbf g}}
\newcommand{\al}{{\alpha}}
\newcommand{\wB}{{\widetilde{B}}}
\newcommand{\wG}{{\widetilde{G}}}
\newcommand{\wS}{{\widetilde{S}}}
\newcommand{\wx}{{\widetilde{\mathbf x}}}
\newcommand{\wt}{{\overline{T}}}
\numberwithin{equation}{section}
\numberwithin{figure}{section}
\begin{document}
\title{Positivity of the T-system cluster algebra}
\author{Philippe Di Francesco and Rinat Kedem}
\address{PDF: Insitut de Physique Th\'eorique du Commissariat \`a l'Energie Atomique, Unit\'e de recherche associe\'ee du CNRS, CEA Saclay/IPhT/Bat 774, F-91191 Gif sur Yvette Cedex, France. e-mail: philippe.di-francesco@cea.fr}
\address{RK: Department of Mathematics, University of Illinois, Urbana, IL 61821. e-mail: rinat@illinois.edu}
\date{\today}
\begin{abstract}
We give the path model solution for the cluster algebra variables of the $A_r$ $T$-system 
with generic boundary conditions. The solutions are 
partition functions of (strongly) non-intersecting paths on weighted graphs. The 
graphs are the same as those constructed for the $Q$-system
in our earlier work, and depend on the seed or initial data in terms of which the 
solutions are given. 
The weights are ``time-dependent'' where ``time'' is the extra parameter which 
distinguishes the $T$-system from the $Q$-system, 
usually identified as the spectral parameter in the context of representation 
theory. The path model is alternatively described on a graph with 
non-commutative weights, and cluster mutations are interpreted as non-commutative
continued fraction rearrangements. As a consequence, the solution is a positive Laurent polynomial 
of the seed data. 
\end{abstract}

\maketitle

\section{Introduction}
In this paper we study solutions of the $T$-system associated to the Lie 
algebras $A_r$, which we write in the following form:
\begin{equation}\label{Tsys}
T_{\al,j,k+1}T_{\al,j,k-1} = T_{\al,j+1,k}T_{\al,j-1,k}+T_{\al+1,j,k}T_{\al-1,j,k},
\end{equation}
where $ j,k\in \Z,\ \al\in I_r=\{1,...,r\},$ and
with boundary conditions 
\begin{equation}\label{boundary}
T_{0,j,k}=T_{r+1,j,k}=1,\quad j,k\in \Z.
\end{equation} 
We consider these equations to be discrete evolution equations for the 
commutative variables $\{T_{\al,j,k}\}$ in the direction of the discrete variable 
$k$. 

Originally, this relation appeared as the fusion relation for the commuting 
transfer matrices  of the generalized Heisenberg model \cite{BR,KNS} 
associated with a simply-laced Lie algebra $\g$, where it is written in the form
\begin{equation}\label{tsys}
\wt_{\al,j,k+1}\wt_{\al,j,k-1} = \wt_{\al,j+1,k}\wt_{\al,j-1,k}-\prod_{\beta\neq\al} 
\wt_{\beta,j,k}^{-C_{\beta,\al}},
\end{equation}
with appropriate boundary conditions. The matrix $C$ is the symmetric Cartan 
matrix of one of the Lie algebra of type $ADE$. Our relation \eqref{Tsys} is 
obtained by a rescaling  of the variables $\wt_{\al,j,k}$ and specializing to the 
Cartan matrix  of  $A_r$.

With special initial condition at $k=0$, it has been proved that the solutions to 
\eqref{tsys} are the $q$-characters \cite{FR} of the Kirillov-Reshetikhin 
modules of the affine Lie algebra $U_q(\widehat{sl}_{r+1})$ \cite{Nakajima}. 

The $T$-system also appears in several other contexts. Of particular 
relevance here is the fact  \cite{LWZ} that the system is a discrete integrable 
equation, the discrete Hirota equation. It is therefore to be expected that the 
system has a complete set of integrals of motion, and that it is exactly 
solvable.
This equation also appears in a related combinatorial context, as the 
octahedron equation, which was studied by \cite{KnutsonTao,Speyer}. 

In this paper, we do not impose any special boundary conditions, but express 
the general solution of the $T$-system in terms of arbitrary initial conditions. 
For example, initial conditions can be chosen by specifying the values of the 
parameters $T_{\al,j,k}$ at $k=0$ and $k=1$, or a more exotic boundary can 
be specified. To solve the system, we  use a path model which is a simple 
generalization of the path model we constructed for the solutions of the $Q$-
system of $A_r$\cite{DFK3,DFK4}.  

In our previous work, we constructed a set of path models, and proved that the 
solutions of the $Q$-system of $A_r$ \cite{KR},
$$
Q_{\al,k+1}Q_{\al,k-1}=Q_{\al,k}^2+Q_{\al-1,k}Q_{\al+1,k},
\quad Q_{0,k}=Q_{r+1,k}=1; k\in\Z, \al\in I_r,
$$
are the generating functions for paths on a positively weighted graph, where 
the weights are a function of the initial conditions. 

With special initial conditions at $k=0$ and $k=1$ (together with a rescaling as 
in \eqref{tsys} which restores the minus sign in the second term on the right 
hand side of the $Q$-system), the solutions are the characters the finite-
dimensional, irreducible modules of $A_r$ with highest weights which are 
multiples of one of the fundamental weights. 

Note that this $Q$-system is obtained by ``forgetting" the spectral parameter 
$j$ in Equation \eqref{Tsys}. Thus the $T$-system can be regarded as an 
affinization or $q$-deformation of the $Q$-system, and the path model we 
present here is therefore a deformation of the path model for the $Q$-system.

Without fixing any special initial conditions, it was shown in \cite{Ke} that the 
solutions of the $Q$-system are cluster variables in a cluster algebra \cite{FZ}.  
We showed in \cite{DFK2} that all $Q$-systems, corresponding to any simple 
Lie algebra, can be formulated as cluster algebras. Thus, the solution of the 
$Q$-system in terms of the statistical model allowed us to prove the positivity 
conjecture of \cite{FZ} for these cluster variables. In fact, as we showed in 
\cite{DFK4}, the solutions are related to the totally positive matrices of 
\cite{FZtotalposit} corresponding to pairs of coxeter elements.

Similarly, we showed in \cite{DFK2} that a large class of equations which we 
call generalized bipartite $T$-systems can be 
formulated as cluster algebras. Equation \eqref{Tsys} is perhaps the simplest 
example of such a system. Motivated by our statistical model introduced in 
\cite{DFK3}, we introduce a path model which provides us with the solution to 
the $T$-system, in terms of a set of initial conditions, as the partition function 
of a path model with time-dependent (or non-commutative) weights. Here, we 
refer to the variable normally identified as the spectral parameter as the time 
parameter, as it is a natural interpretation from the point of view of paths.

This paper is organized as follows. In Section \ref{sec:cluster}, we review the 
necessary definition of a cluster algebra. We recall our formulation \cite{DFK2} 
of $T$-systems as cluster algebras. We describe the conserved quantities of 
the $T$-system in terms of discrete Wronskian determinants in Section 
\ref{sec:wronskians}. We define a generalized notion of hard particle models 
on a graph in Section \ref{sec:conserved} and identify the conserved quantities as hard 
particle partition functions on a specific graph. In Section \ref{firpositsec}, we use our 
conserved quantities to write the solutions of the $T$-system as the partition functions 
of paths on a weighted graph. 
The weight of a step in a path depends on the order in which the steps are taken, that is, 
the weights are time-dependent. The solutions are written as functions of the 
fundamental initial data, and the graph is the same as the one used in the 
$Q$-system solution. Positivity of the $T$-system solutions in terms of the 
fundamental seed variables follows from this formulation. 

To prove the positivity in terms of other seeds, we give a formulation of our model 
in terms of non-commutative weights in Section \ref{sec:otherpaths}. We are then 
able to describe the  solutions of the $T$-system as a function of other seed data 
as partition functions on new graphs with weights which depend on the mutated 
seeds. The key to the construction is an operator version of the fraction rearrangement 
lemmas used in \cite{DFK2}. These rearrangements are equivalent to mutations in the 
case of the $Q$-system. Here, they are equivalent to compound mutations. 
We are thus able to write the $T$-system solution explicitly in terms of its initial
data, for a subset of cluster seeds. 

This paper should be considered as a (special case of) non-commutative generalization 
of our work on the solutions of $Q$-system \cite{DFK3,DFK4}. In particular, 
the graphs on which we build our path models are the same as for the $Q$-system, 
and the only difference is the time-dependence or non-commutativity of the weights. 
The various key properties, such as the rearrangement lemmas for continued 
fractions and the generalization of  the Lindstr\"om-Gessel-Viennot theorem 
for strongly non-intersecting paths, all have straightforward non-commutative 
counterparts which are used here.

\noindent{\bf Acknowledgements:} 
P.D.F.'s research is supported in part by the 
ANR Grant GranMa, the
ENIGMA research training network MRTN-CT-2004-5652,
and the ESF program MISGAM.  R.K.'s 
research is supported by NSF grant DMS-0802511. R.K. thanks IPhT at CEA/
Saclay for their kind hospitality. We also acknowledge the hospitality of the
Mathematisches Forschungsinstituts Oberwolfach (RIP program), 
where this paper was completed.

\section{$T$-systems as cluster algebras}\label{sec:cluster}

\subsection{Cluster algebras}
We use the following definition of a cluster algebra \cite{FZ,Zel04}, slightly 
specialized to suit our needs in this paper. 

Let $S\subset \wS$ be two discrete sets (possibly infinite) and consider the 
field $\mathcal F$ of rational 
functions over $\Q$ in a set of independent variables indexed by $\wS$.  

We define a {\em seed} in 
$\mathcal F$ to be a pair $(\widetilde{\bx}, \widetilde{B})$, where $\wx=\{x_m: 
m\in \widetilde{S}\}$ is a set of 
commuting variables, and $\widetilde{B}$ is an integer matrix, with rows 
indexed by $\widetilde{S}$ and 
columns indexed by $S$. The matrix $B$, which is the square submatrix of $
\wB$ made up of the rows of $\widetilde{B}$ indexed by $S$, is skew 
symmetric. 

The {\em cluster} of the seed $(\widetilde{x},\widetilde{B})$ is the set of 
variables $\{x_m :  m\in S\}$, and the {\em coefficients} are the set of variables 
$\{x_m:  m\in \widetilde{S}\setminus S\}$.

Next, we define a {\em seed mutation}. For any $m\in S$, a mutation in the 
direction $m$, $\mu_m: (\widetilde{x},\widetilde{B})\mapsto (\widetilde{x}',
\widetilde{B'})$, is a discrete evolution of the seed. Explicitly,
\begin{itemize}
\item The mutation $\mu_m$ leaves $x_n$ with $n\neq m$ invariant, and 
updates the variable $x_m$ only, via the exchange relation
\begin{equation}\label{exchange}
x_m' = x_m^{-1} \left(\prod_{n\in \widetilde{S}} x_n^{[\widetilde{B}_{n,m}]_+}+ 
\prod_{n\in \widetilde{S}} x_n^{[-\widetilde{B}_{n,m}]_+}\right)
\end{equation}
where $[n]_+= {\rm max}(n,0)$.
\item The exchange matrix $\widetilde{B}'$ has entries  
\begin{equation}
\wB'_{i,j} = \left\{ \begin{array}{ll} -\wB_{i,j} & \hbox{if $i=m$ or $j=m$} ;\\
\wB_{i,j} + {\rm sign}(\wB_{i,m})[\wB_{i,m}\wB_{m,j}]_+) & \hbox{otherwise}.
\end{array}\right.
\end{equation}
\end{itemize}

Note that we only define mutations for the set $S$, and not for the coefficient 
set $\wS\setminus S$. That is, coefficients do not evolve.

Fix a seed $(\widetilde{\bx},\widetilde{B})$ and consider the orbit $\mathcal X
\subset\mathcal F$ of the cluster variables under all combinations of the 
mutations $\mu_m,\ m\in S$. The cluster algebra is the $\Z[\mathbf c^{\pm1}]$-
subalgebra of $\mathcal F$ generated by $\mathcal X$, where $\mathbf c$ is 
the common coefficient set of the orbit of the seed.

\begin{remark} The particular system which we solve in this paper does not 
require us to have a coefficient set, that is, we can set $S=\widetilde{S}$. 
However, to make more direct contact with representation theory, it is desirable 
to have the coefficient set be enumerated by the roots of the Lie algebra. In 
this context, we need to set the values of the coefficients to the special points 
$ -1$.
\end{remark}

Cluster algebras can be considered to be discrete dynamical systems, which is 
the point of view we adopt in this paper. 

\subsection{bipartite $T$-systems as cluster algebras}\label{sec:Tcluster}
In this section we review some of the definitions of Appendix B of \cite{DFK2}, 
where generalized bipartite $T$-systems were shown to have a cluster algebra 
structure. 

\begin{defn}
A generalized bipartite $T$-system is a recursion relation for the commuting, 
invertible variables $\{T_{\al,j;k}\}$, where $\al\in I_r$ and $j,k\in \Z$, of the 
form
\begin{equation}\label{genT}
T_{\al,j;k+1}T_{\al,j;k-1} = T_{\al,j+1;k}T_{\al,j-1;k} + q_\al \prod_{j'} \prod_{\al'} 
(T_{\al',j';k})^{A_{\al',\al}^{j',j}}
\end{equation}
where $A$ is an incidence matrix, that is, a symmetric matrix with positive 
integer entries.
\end{defn}

The matrix $A$ is generally of infinite size, unless special boundary conditions 
are imposed on the system which truncate the range of the variables $j$. We 
do not impose such boundary conditions in this paper, although they are 
clearly of interest \cite{HL,N}. The symmetry of $A$ is required for  the {\em 
bipartite} property to hold (see below). $T$-systems which are not bipartite can 
also be defined, and in that case, the matrix $A$ is not symmetric.

\begin{example}
The first example of such a $T$ system is the one described in \eqref{tsys}. In 
that case, we take the matrix $A$ to be as follows:
\begin{equation}\label{Amatrix}
A_{\al,\beta}^{j,j'} = \mathcal I_{\al,\beta} \delta_{j,j'} ,
\end{equation}
where $\mathcal I_{\al,\beta}=C-2 I$ is the incidence matrix of the Dynkin 
diagram associated with a simply-laced Lie algebra $\g$. The coefficients $q_
\al$ are all set to be $-1$. However, it is always possible to renormalize the 
variables so that $q_\al=1$ in these cases \cite{Ke}, and we use this approach 
here. 

In particular, if $\g=A_r$, $(\mathcal I)_{\al,\beta} = \delta_{\al,\beta+1}+
\delta_{\al,\beta-1}$. This is the case we solve in this paper.
\end{example}

We note that another example of generalized $T$-systems appeared in the 
context of preprojective algebras and the categorification program of 
\cite{GLS}. The explicit connection was made in \cite{DFK2}, Example 4.4.

Finally, define the (possibly infinite) matrix $P$ with entries 
\begin{equation}\label{Pmat}
P_{\al,\beta}^{j,l}=\delta_{\al,\beta}(\delta_{i,j+1}+\delta_{i,j-1}).
\end{equation}
 Then we can rewrite \eqref{genT} as
\begin{equation}\label{gengenT}
T_{\al,j;k+1}T_{\al,j;k-1} = \prod_{\al,j} T_{\beta,j'}^{P_{\beta,\al}^{j',j}} + q_\al 
\prod_{j'} \prod_{\al'} (T_{\al',j';k})^{A_{\al',\al}^{j',j}}.
\end{equation}
In the systems considered in \cite{DFK2}, we allowed the matrix $P$ to be a 
matrix with positive integer entries, such that it commutes with the matrix $A$, 
together with another condition on the sum of its entries (see Lemma 
\ref{lemmaTcluster} below). Such a system is also a generalized bipartite $T$-
system.

\subsection{Cluster algebra structure}
We recall the formulation found in Appendix B of \cite{DFK2} of the cluster 
algebra associated with generalized (bipartite) $T$-systems. 

In the notations of Section \ref{sec:cluster}, let $S= (I_r \sqcup \overline{I}_r)
\times Z$, and $\wS=S\sqcup I_r'$. Each set $I_r, \overline{I}_r$ and $I_r'$ is 
just the set with $r$ elements. For convenience, if $\al\in I_r$, then by $
\overline{\al}$ we mean the $\al$th element of $\overline{I}_r$, etc.

We define the {\em fundamental} seed $(\wx,\wB)_0$ as follows. The variables 
$\wx_0$ are
\begin{eqnarray}\label{Tclusterseed}
x_{\al,j}&=& T_{\al,j;0},\ (\al\in I_r, j\in \Z); \nonumber\\
x_{\overline{\al},j} & =& T_{\al,j;1},\ (\overline{\al}\in \overline{I}_r, j\in \Z);
\nonumber \\
x_{\al'} &=& q_\al , \ \al'\in I_r';\\
\end{eqnarray}
The elements of the set $\{x_{\al,j}\}\sqcup \{x_{\overline{\al},j}\}$ are the 
cluster variables and $\{x_{\al'}\}$ are the coefficients. The exchange matrix of 
the fundamental seed is defined as follows:
\begin{eqnarray}\label{Texchangeseed}
&&B_{\al,j;\beta,l} = 0 , \ (\al,\beta \in I_r,\ j,l\in \Z), \quad 
B_{\overline{\al},j;\overline{\beta},l} = 0 , \ (\overline{\al},\overline{\beta} \in 
\overline{I}_r,\ j,l\in \Z),\nonumber\\
&&  
B_{\al,j;\overline{\beta},l} = -P_{\al,\beta}^{j,l} + A_{\al,\beta}^{j,l} =
-B_{\overline{\beta},l;\al,j}\nonumber\\
&& \wB_{\al';\beta,j}=-\wB_{\al',\overline{\beta},j}=-\delta_{\al,\beta}.
\end{eqnarray}
The last equation above denotes the entries of the extended $B$-matrix, 
corresponding to the coefficients, which do not mutate. The matrices $A,P$ 
are those of equation \eqref{gengenT} for the generalized $T$-system. 

\begin{example}
In the case of the $A_r$ system \eqref{Tsys}, we have the matrix $A$ as in 
\eqref{Amatrix}, $P$ as in \eqref{Pmat} and $q_\al=1$. In that case we do not 
need to include the coefficients $q_\al$, and the matrix $\widetilde{B}$ is equal 
to the matrix $B$. To recover the original $T$-system \eqref{tsys}, we take $q_
\al=-1$. 
\end{example}

It is clear that each of the mutations $\mu_{\al,j}$ and $\mu_{\overline{\al},j}$ 
exchanges one of the cluster variables in $\wx_0$ via one of the $T$-system 
equation relations \eqref{gengenT}.
The mutation $\mu_{\al,j}$ acts on $\wx_0$ as one of the $T$-system 
evolutions \eqref{gengenT}, where we specialize to $k=1$: $\mu_{\al,j}(T_{\al,j;
0})=T_{\al,j;2}$. Similarly, 
$\mu_{\overline{\al},j} T_{\al,j;1}=T_{\al,j;-1}$ is a $T$-system equation 
specialized to $k=0$.

Quite generally, if $B_{a,b}=0$ then $\mu_a\circ \mu_b = \mu_b \circ \mu_a$. 
Since $B_{\al,j;\beta,l}=0$ for all $\al,\beta\in I_r$ and $j,l\in \Z$,
when acting on the initial seed $(\wx,\wB)_0$, the mutations $\mu_{\al,m}$ 
commute with each other for all $\al,m$. Similarly the mutations $
\mu_{\overline{\al},m}$ also commute among themselves.

Therefore we can define the compound mutations $$\mu:=\prod_{\al,m} 
\mu_{\al,m},\quad \overline{\mu}:=\prod_{\overline{\al},m} 
\mu_{\overline{\al},m}$$
which act on  $(\wx,\wB)_0$. More generally,
Define $(\wx,\wB)_{2k}$ to be the seed with $x_{\al,j}=T_{\al,j;2k}, 
x_{\overline{\al},j}=T_{\al,j;2k+1}$ and 
$\wB_{2k}=\wB$. Define $(\wx,\wB)_{2k+1}$ to be the seed with 
$x_{\al,j}=T_{\al,j;2k+2}, x_{\overline{\al},j}=T_{\al,j;2k}$ and 
$\wB_{2k+1}=-\wB$. 
Then it is clear that $\mu(\wx_{2k})=\wx_{2k+1}$: Each mutation $\mu_{\al,j}$ 
mutates the variable $T_{\al,j;2k}$ into the variable $T_{\al,j;2k+2}$. Similarly, 
it is easy to check that $\overline{\mu}(\wx_{2k})=\wx_{2k-1}$, $\mu(\wx_{2k
+1})=\wx_{2k}$ and $\overline{\mu}(\wx_{2k+1})=\wx_{2k+2}$.

The following statement is  Lemma 4.6 of \cite{DFK2}:
\begin{lemma}\label{lemmaTcluster}
Assume that the matrix $A$ commutes with the matrix $P$, and that 
\begin{equation}\label{Prest}
\sum_k P_{\al,\beta}^{kj} = 2 \delta_{\al,\beta}
\end{equation}
for any $j$. 
Then the cluster algebra $\mathcal X$ which includes the seed $(\wx,\wB)_0$ 
as in \eqref{Tclusterseed}, \eqref{Texchangeseed} includes all the solutions of 
the $T$-system \eqref{gengenT}. All the $T$-system relations are exchange 
relations in this cluster algebra.
\end{lemma}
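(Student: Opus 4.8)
The plan is to verify the two claims of the lemma directly from the mutation rules and the explicit form of the fundamental seed. There are really two things to check: (i) that each compound mutation $\mu$ (respectively $\overline{\mu}$) implements one ``time step'' of the $T$-system, so that iterating $\mu,\overline{\mu}$ generates all $T_{\al,j;k}$; and (ii) that the exchange matrix $\wB$ is correctly restored (up to the expected sign flip) after each compound mutation, so that the next compound mutation again reproduces a $T$-system relation. The hypotheses --- that $A$ commutes with $P$ and that the row sums of $P$ equal $2\delta_{\al,\beta}$ --- are precisely what is needed to make (ii) work, so I expect the matrix bookkeeping in (ii) to be the heart of the argument.

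\textbf{Step 1: the exchange relation reproduces the $T$-system.} I would first compute, for the fundamental seed, the single mutation $\mu_{\al,j}$ applied to $x_{\al,j}=T_{\al,j;0}$. Reading off the $B$-matrix entries from \eqref{Texchangeseed}, the only nonzero entries in column $(\al,j)$ come from the pairing $B_{\al,j;\overline\beta,l}=-P_{\al,\beta}^{j,l}+A_{\al,\beta}^{j,l}$ and from the coefficient rows $\wB_{\al';\beta,j}$. Splitting these into their positive and negative parts $[\,\cdot\,]_+$ and substituting into the exchange relation \eqref{exchange}, one sees that the $-P$ part contributes $\prod T_{\beta,j'}^{P_{\beta,\al}^{j',j}}$ and the $+A$ part contributes $q_\al\prod(T_{\al',j'})^{A^{j',j}_{\al',\al}}$, which is exactly the right-hand side of \eqref{gengenT}. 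Here I use that $P$ has nonnegative integer entries and $A$ is an incidence matrix, so that the two terms do not mix in the $[\,\cdot\,]_+$ decomposition. This gives $\mu_{\al,j}(T_{\al,j;0})=T_{\al,j;2}$, and the analogous computation for $\mu_{\overline{\al},j}$ gives $T_{\al,j;1}\mapsto T_{\al,j;-1}$.

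\textbf{Step 2: the compound mutations are well-defined and shift $k$.} Because $B_{\al,j;\beta,l}=0$ and $B_{\overline\al,j;\overline\beta,l}=0$, the individual mutations within each family commute, as already noted in the excerpt; this makes $\mu=\prod_{\al,m}\mu_{\al,m}$ and $\overline\mu=\prod_{\overline\al,m}\mu_{\overline\al,m}$ unambiguous. Applying Step 1 simultaneously to every cluster variable, $\mu$ sends $\wx_0=\{T_{\al,j;0}\}\cup\{T_{\al,j;1}\}$ to $\wx_1=\{T_{\al,j;2}\}\cup\{T_{\al,j;1}\}$ and $\overline\mu$ sends it to $\{T_{\al,j;0}\}\cup\{T_{\al,j;-1}\}$, matching the declared seeds $(\wx,\wB)_{2k\pm 1}$.

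\textbf{Step 3: the $B$-matrix transforms correctly (the main obstacle).} The crux is to show that after the compound mutation $\mu$ the exchange matrix becomes $-\wB$ (and after $\overline\mu$ likewise, so that two compound mutations restore $\wB$); only then does the next compound mutation again produce genuine $T$-system relations rather than spurious monomials. I would track the matrix update \eqref{exchange}'s companion rule through the whole family $\{\mu_{\al,m}\}$ at once. The diagonal-block vanishing $B_{\al,j;\beta,l}=0$ means the ``if $i=m$ or $j=m$'' clause simply flips the signs of the relevant entries, while the quadratic correction term $\mathrm{sign}(\wB_{i,m})[\wB_{i,m}\wB_{m,j}]_+$ must be shown to cancel in the aggregate. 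This is exactly where the hypotheses enter: writing $B$ in block form with off-diagonal block $M=-P+A$, the commutation $[A,P]=0$ controls how the quadratic terms reorganize, and the row-sum condition \eqref{Prest}, $\sum_k P_{\al,\beta}^{kj}=2\delta_{\al,\beta}$, pins down the total multiplicity so that the corrections telescope to zero and leave only the overall sign reversal. Once the sign-reversal of $\wB$ under each compound mutation is established, an immediate induction on $k$ shows that $(\wx,\wB)_{2k}\xrightarrow{\mu}(\wx,\wB)_{2k+1}\xrightarrow{\mu}(\wx,\wB)_{2k+2}$ and the $\overline\mu$-analogues hold, so every $T_{\al,j;k}$ lies in the orbit $\mathcal X$ and every instance of \eqref{gengenT} is realized as an exchange relation. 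I expect essentially all genuine difficulty to be concentrated in verifying this cancellation of the quadratic $B$-matrix corrections under the compound mutation; the rest is direct substitution.
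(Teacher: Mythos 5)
Your proposal is correct and follows essentially the same route as the paper: verify that each single mutation realizes a $T$-system exchange, use the vanishing diagonal blocks to define the compound mutations $\mu,\overline\mu$, and then show $\mu(\wB)=\overline\mu(\wB)=-\wB$ by tracking the quadratic corrections, with $[A,P]=0$ cancelling the barred-barred block and the row-sum condition \eqref{Prest} producing the sign flip in the coefficient rows. The paper's proof is exactly this entry-by-entry verification, so your Step 3, once written out block by block, coincides with it.
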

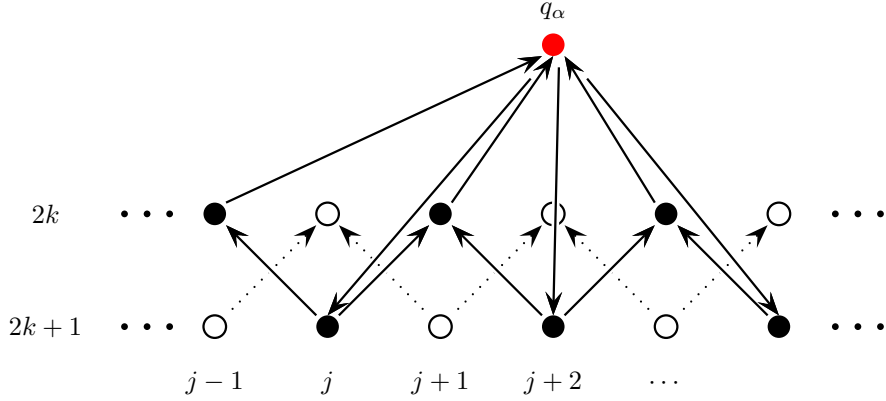
\begin{figure}\label{alphaslicefigure}
\begin{center}
\psset{unit=1.5mm,linewidth=.3mm,dimen=middle}
\begin{pspicture}(-20,-10)(70,30)
\multips(10,0)(20,0){3}{\rput(0,0){\pscircle*(0,0){1}}}
\multips(0,0)(20,0){3}{\rput(0,0){\pscircle(0,0){1}}}
\multips(0,10)(20,0){3}{\rput(0,0){\pscircle*(0,0){1}}}
\multips(10,10)(20,0){3}{\rput(0,0){\pscircle[border=1pt](0,0){1}}}
\rput(10,-5){$j$}
\rput(0,-5){$j-1$}
\rput(20,-5){$j+1$}
\rput(30,-5){$j+2$}
\rput(40,-5){$\cdots$}
\multips(55,0)(2,0){3}{\rput(0,0){\pscircle*(0,0){0.3}}}
\multips(55,10)(2,0){3}{\rput(0,0){\pscircle*(0,0){0.3}}}%
\multips(-8,0)(2,0){3}{\rput(0,0){\pscircle*(0,0){0.3}}}
\multips(-8,10)(2,0){3}{\rput(0,0){\pscircle*(0,0){0.3}}}
\rput(-15,0){$2k+1$}
\rput(-15,10){$2k$}
\multips(0,0)(20,0){3}{\psline[linestyle=dotted,arrowsize=3pt
    4]{->}(1,1)(9,9)}
\multips(0,0)(20,0){2}{\psline[linestyle=dotted,arrowsize=3pt 4]{->}(19,1)(11,9)}
\multips(0,0)(20,0){3}{\psline[border=1pt,arrowsize=3pt 4]{->}(9,1)(1,9)}
\multips(0,0)(20,0){2}{\psline[border=1pt,arrowsize=3pt 4]{->}(11,1)(19,9)}
\rput(30,25){\pscircle*[linecolor=red](0,0){1}}
\psline[border=1pt,arrowsize=3pt 4]{<-}(10,1)(28,22)
\psline[border=1pt,arrowsize=3pt 4]{<-}(30,1)(30.5,23)
\psline[border=1pt,arrowsize=3pt 4]{<-}(50,1)(33,22)
\psline[border=1pt,arrowsize=3pt 4]{->}(1,11)(29,24)
\psline[border=1pt,arrowsize=3pt 4]{->}(21,11)(30,24)
\psline[border=1pt,arrowsize=3pt 4]{->}(39,11)(31,24)
\rput(30,28){$q_\alpha$}
\end{pspicture}
\caption{A slice of the quiver graph of $\wB$, corresponding to constant $\al$. 
The nodes in the strip are labeled by $(j,k)$ of $T_{\al,j;k}$. The two 
subgraphs with even and odd $j+k$ decouple in this slice, so we illustrate the 
only the connectivity of  nodes of the same parity to node $q_\al$. The 
mutation $\mu$ reverses all arrows connected to $q_\al$.}
\end{center}
\end{figure}
To prove this Lemma, we need
\begin{lemma}
$$
 \mu\left( (\wx,\wB)_{2k}\right) = (\wx,\wB)_{2k+1},\quad \overline{\mu}
\left((\wx,\wB)_{2k} \right) = (\wx,\wB)_{2k-1}.
$$
\end{lemma}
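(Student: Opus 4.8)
The plan is to prove each seed identity by checking separately how the compound mutation acts on the cluster variables and on the exchange matrix, and to deduce the $\overline{\mu}$ statement from the $\mu$ statement by running the identical argument on the barred vertex set. The single structural fact that makes this feasible is that $B_{\al,j;\beta,l}=0$ for all unbarred pairs: the constituent mutations $\mu_{\al,m}$ then pairwise commute, so $\mu=\prod_{\al,m}\mu_{\al,m}$ is mutation at an \emph{independent set} of vertices, and its total effect can be written down in closed form rather than tracked one mutation at a time.

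For the cluster variables I would expand the exchange relation \eqref{exchange} for a single $\mu_{\al,j}$ on $(\wx,\wB)_{2k}$. The only nonzero entries in the column $(\al,j)$ are $\wB_{\overline{\beta},l;\al,j}=P_{\al,\beta}^{j,l}-A_{\al,\beta}^{j,l}$ together with the coefficient entry, and since $P$ and $A$ have disjoint supports (for the $A_r$ data $P$ lives on $\beta=\al,\ l=j\pm1$ while $A$ lives on $l=j$ with $\beta$ a Dynkin neighbour of $\al$) the positive part $[\,\cdot\,]_+$ isolates $P$ in the first monomial and $A$ in the second, the coefficient exponent supplying the factor $q_\al$. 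This is exactly the right-hand side of the $T$-system \eqref{gengenT} at level $k\mapsto 2k+1$, so $\mu_{\al,j}$ replaces $x_{\al,j}=T_{\al,j;2k}$ by $T_{\al,j;2k+2}$ and fixes every barred variable, giving the variable content of $(\wx,\wB)_{2k+1}$.

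The matrix identity $\mu(\wB)=-\wB$ is the substantive part. Because the unbarred vertices are pairwise non-adjacent, mutating at each of them merely reverses its incident arrows with no interference, so every unbarred--barred entry and every coefficient-to-unbarred entry is negated, which is precisely what $\wB\mapsto-\wB$ demands on those blocks. What remains is to show that no spurious entries are produced among the vertices that are \emph{not} mutated. Writing the accumulated quadratic correction in the symmetric form $\tfrac12(|\wB_{i,m}|\wB_{m,j}+\wB_{i,m}|\wB_{m,j}|)$ summed over $m$ in the independent set, I would verify two points: the coefficient-to-barred entry $\wB_{\al';\overline{\beta},n}=\delta_{\al,\beta}$ acquires the correction $-\sum_{s}[P_{\al,\beta}^{s,n}-A_{\al,\beta}^{s,n}]_+$, which equals $-2\delta_{\al,\beta}$ by the normalization \eqref{Prest} and hence flips its sign correctly; and the barred--barred block, which starts at $0$, must remain $0$.

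I expect the barred--barred cancellation to be the only real obstacle, and it is the single place where the hypothesis $[A,P]=0$ of Lemma \ref{lemmaTcluster} is genuinely needed. The correction to $\wB_{\overline{\beta},l;\overline{\gamma},p}$ is a sum over unbarred $(\al,s)$ of products of $(P_{\al,\beta}^{s,l}-A_{\al,\beta}^{s,l})$ with $(-P_{\al,\gamma}^{s,p}+A_{\al,\gamma}^{s,p})$, and proving that these annihilate in total is where the commutation of $A$ and $P$, together with the disjointness of their supports, enters. Granting this, the off-diagonal blocks are negated, the diagonal blocks stay zero and the coefficient rows are negated, so $\mu(\wB)=-\wB$; combined with the variable computation this yields $\mu((\wx,\wB)_{2k})=(\wx,\wB)_{2k+1}$, and the same computation performed on the barred independent set (sending $T_{\al,j;2k+1}$ to $T_{\al,j;2k-1}$ and again flipping $\wB$) gives $\overline{\mu}((\wx,\wB)_{2k})=(\wx,\wB)_{2k-1}$.
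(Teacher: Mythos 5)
Your proposal is correct and follows essentially the same route as the paper: the variable update is the $T$-system exchange relation (which the paper handles in the discussion preceding the lemma), and the matrix identity $\mu(\wB)=-\wB$ is verified block by block using the independence of the unbarred vertices, the disjoint supports of $P$ and $A$, the normalization \eqref{Prest} for the coefficient-to-barred entries, and $[P,A]=0$ for the barred--barred cancellation. The only difference is presentational --- you defer the explicit computation showing the barred--barred correction equals $(PA-AP)_{\beta,\gamma}^{j,k}$, which the paper writes out --- but you have correctly identified it as the one step where the commutation hypothesis is genuinely used.
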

\begin{proof}
In light of the preceding discussion, 
all that needs to be proved is that $\mu(\wB)=\overline{\mu}(\wB)=-\wB$. Let $
\wB'=\mu(\wB)$. Then, since $B_{\al,j;\beta,k}=0$, we have
\begin{itemize}
\item $\mu_{\al,i}(B_{\beta,j;\gamma,k})=  {\rm sign} (B_{\beta,j;\alpha,i})
[B_{\beta,j;\alpha,i}B_{\alpha,i;\gamma, k}]_+=0$;
\item $\mu_{\al,i}(B_{\overline{\beta},j;\gamma,k})=-B_{\overline{\beta},j;
\gamma,k}$ if $(\al,i)=(\gamma,k)$, and is otherwise unchanged, since if $
(\al,i)\neq(\gamma,k)$,
$$\mu_{\al,i}(B_{\overline{\beta},j;\gamma,k})=B_{\overline{\beta},j;\gamma,k} 
+ {\rm sign}(B_{\overline{\beta},j;\al,i})[B_{\overline{\beta},j;\al,i} B_{\al,i;
\gamma,k}]_+=B_{\overline{\beta},j;\gamma,k}.$$
Similarly, $\mu_{\al,i}(B_{\beta,j;\overline{\gamma},k})= -B_{\beta,j;
\overline{\gamma},k}.$
\item  Recall the restriction that $[P,A]=0$. Then
$$
\mu(B_{\overline{\beta},j;\overline{\gamma},k})=
\sum_{\al,i} {\rm sign}(B_{\overline{\beta},j;\al,i})[B_{\overline{\beta},j;\al,i} 
B_{\al,i;\overline{\gamma},k}]_+\\
= (P A - A P)_{\beta,\gamma}^{j,k}=0.
$$
\item We have $\mu_{\al,i}(B_{\beta';\gamma,k})=-B_{\beta';\gamma,k}$, and 
otherwise,
if $(\al,i)\neq (\gamma,k)$ then $\mu_{\al,i}$
$$
\mu_{\al,i}(\wB_{\beta';\gamma,k})=\wB_{ \beta';\gamma,k}+ \sum_{\al,i} {\rm 
sign}(\wB_{\beta';\al,i})[\wB_{\beta';\al,i} B_{\al,i;\gamma,k}] = \delta_{\beta,
\gamma},
$$
so that $\mu(\wB_{\beta';\gamma,k})=-\wB_{\beta';\gamma,k}$.
\item Finally, using the restriction \eqref{Prest} on the summation of elements 
of $P$,
$$
\mu(\wB_{\beta';\overline{\gamma},k}) = \delta_{\beta,\gamma} +\sum_{\al,i}
{\rm sign}(\wB_{\beta';\al,i})[\wB_{\beta';\al,i} B_{\al,i;\overline{\gamma},k}]_+ 
= \delta_{\beta,\gamma}-\sum_\al\delta_{\al,\beta}
\sum_i P_{\al,\gamma}^{i,k} = -\delta_{\beta,\gamma}.
$$
\end{itemize}
In the quiver graph corresponding to $\wB$, the last two statements are about 
how nodes $x_{\al,j}=T_{\al,2k}$ and  $x_{\overline{\al},j}=T_{\al,j;2k+1}$ are 
connected to node $x_{\beta'}=q_\beta$. If $\al\neq \beta$, they are not 
connected, and if $\al=\beta$, the connectivity is illustrated in Figure 
\ref{alphaslicefigure} and the mutations in Figure \ref{mutationfigure}.

\begin{figure}\label{mutationfigure}
\begin{center}
\psset{unit=.9mm,linewidth=.3mm,dimen=middle}
\begin{pspicture}(0,-5)(70,80)
\rput(0,50){\rput(10,0){\pscircle*(0,0){1}}
\rput(10,-3){$j$}
\rput(0,10){\pscircle*(0,0){1}}
\rput(0,-3){$j-1$}
\rput(20,-3){$j+1$}
\rput(20,10){\pscircle*(0,0){1}}
\rput(10,25){\pscircle*[linecolor=red](0,0){1}}
\rput(13,25){$q_\alpha$}
\psline[arrowsize=3pt 4]{->}(9.5,1)(0.5,9)
\psline[arrowsize=3pt 4]{->}(10.5,1)(19.5,9)
\psline[arrowsize=3pt 4]{->}(1,11)(9,24)
\psline[arrowsize=3pt 4]{->}(19,11)(11,24)
\psline[arrowsize=3pt 4]{->}(10,24)(10,1)
\psline[arrowsize=3pt 4]{->}(25,10)(45,10)}
\rput(35,63){$\mu_{\alpha,j-1}\mu_{\alpha,j+1}$}
\rput(50,50){\rput(10,0){\pscircle*(0,0){1}}
\rput(0,10){\pscircle*(0,0){1}}
\rput(20,10){\pscircle*(0,0){1}}
\rput(10,25){\pscircle*[linecolor=red](0,0){1}}
\psline[arrowsize=3pt 4]{<-}(9.5,1)(0.5,9)
\psline[arrowsize=3pt 4]{->}(10.5,1)(19.5,9)
\psline[arrowsize=3pt 4]{<-}(1,11)(9,24)
\psline[arrowsize=3pt 4]{->}(19,11)(11,24)}
\psline[arrowsize=3pt 4]{->}(57,47)(37,37)
\rput(47,45){$\mu_{\alpha,j}$}
\rput(20,10){\rput(10,0){\pscircle*(0,0){1}}
\rput(0,10){\pscircle*(0,0){1}}
\rput(20,10){\pscircle*(0,0){1}}
\rput(10,25){\pscircle*[linecolor=red](0,0){1}}
\psline[arrowsize=3pt 4]{<-}(9.5,1)(0.5,9)
\psline[arrowsize=3pt 4]{<-}(10.5,1)(19.5,9)
\psline[arrowsize=3pt 4]{<-}(1,11)(9,24)
\psline[arrowsize=3pt 4]{<-}(19,11)(11,24)
\psline[arrowsize=3pt 4]{<-}(10,24)(10,1)}
\end{pspicture}\caption{ The local action of the mutation $\mu$ on a section of 
the quiver graph. The compound mutation reverses all arrows connected to 
$q_\al$.}
\end{center}
\end{figure}
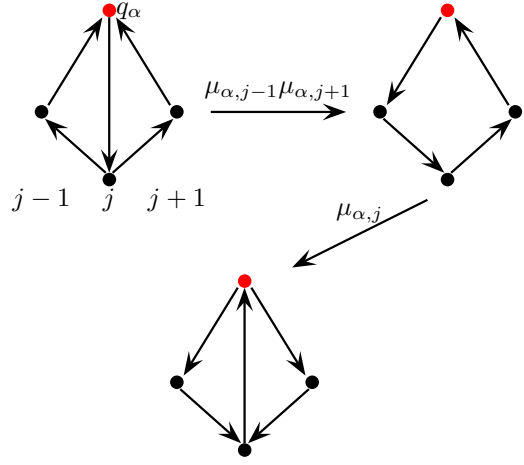

We have shown that $\mu(\wB)=-\wB$. The proof that $\overline{\mu}(\wB)=-
\wB$ is similar.
\end{proof}

Thus, we have shown that all the variables $T_{\al,j;k}$ appear in the cluster 
algebra, in fact, within a bipartite graph composed of the nodes reached from $
(\wx,\wB)_0$ via combinations of the compound mutations $\mu$ and $
\overline{\mu}$ only.

In this paper, we study the $A_r$ $T$-system solutions in terms of the 
fundamental seed cluster $\wx_0$. The result will be an explicit interpretation 
of the solutions as partition functions of paths on a graph whose weights which 
are positive monomials in the variables $\wx_0$. This will imply the positivity 
property \cite{FZ} for the cluster variables $T_{\al,j;k}$: They can be expressed 
as Laurent polynomials with non-negative coefficients in terms of the initial 
data.

\section{Basic properties of the $T$-system}\label{sec:wronskians}

From here on, we specialize the discussion to the $T$-system
\eqref{Tsys}.  Note that the equation \eqref{Tsys} is a three-term
recursion in the index $k$, and allows to determine all the
$\{T_{\al,j,k+1}\}_{\al\in I_r,j\in\Z}$ in terms of the $\{
\{T_{\al,j,k},T_{\al,j,k-1}\}_{\al\in I_r,j\in\Z}$.  We wish to first
study the solution $T_{\al,j,k}$ to Equation \eqref{Tsys} in terms of
the ``fundamental" initial data $\bx_0=(T_{\al,j,0},T_{\al,j,1})_{\al\in
I_r,j\in \Z}$, that is, $\bx_0$. The techniques used in this section
are a straightforward generalization of the methods used for the
$Q$-system in \cite{DFK3}.  We therefore present the proofs of the
theorems in the Appendix, as they use standard techniques in the
theory of determinants.

\subsection{Discrete Wronskians and conserved quantities}

We can express the subset of variables $\{T_{\alpha,j,k}: j,k\in \Z, \al>1\}$  as 
polynomials of the variables in the set $\{ T_{1,j,k}:j,k\in\Z\}$, {\em cf} 
\cite{KNS}:
\begin{thm}\label{elim}
\begin{equation}\label{wronsal}
T_{\al,j,k}=\det_{1\leq a,b\leq \al} \, \left(T_{1,j-a+b,k+a+b-\al-1}\right), \quad \al
\in I_r, \ j,k\in \Z
\end{equation}
\end{thm}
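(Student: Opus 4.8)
The plan is to prove the determinant formula \eqref{wronsal} by induction on $\al$, using the $T$-system relation \eqref{Tsys} to identify a three-term recursion in $\al$ that the determinants automatically satisfy. The key observation is that \eqref{Tsys}, read with the boundary conditions \eqref{boundary}, is precisely a Desnanot--Jacobi (Dodgson condensation) identity for the Wronskian-type determinants on the right-hand side. First I would define $W_{\al,j,k}$ to be the $\al\times\al$ determinant $\det_{1\le a,b\le\al}\left(T_{1,j-a+b,k+a+b-\al-1}\right)$ and check the base cases: $W_{0,j,k}=1$ (empty determinant) matches $T_{0,j,k}=1$ from \eqref{boundary}, and $W_{1,j,k}=T_{1,j,k}$ trivially. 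It then suffices to show that the $W_{\al,j,k}$ satisfy the same recursion \eqref{Tsys} as the $T_{\al,j,k}$, with the same initial data at $\al=0,1$; since \eqref{Tsys} determines $T_{\al+1,j,k}$ from lower $\al$ uniquely (the boundary value $T_{0,j,k}=1$ being invertible), equality propagates.

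The heart of the argument is verifying the recursion
\begin{equation*}
W_{\al,j,k+1}\,W_{\al,j,k-1} = W_{\al,j+1,k}\,W_{\al,j-1,k} + W_{\al+1,j,k}\,W_{\al-1,j,k}.
\end{equation*}
I would recognize that the four shifted determinants $W_{\al,j,k\pm1}$, $W_{\al,j\pm1,k}$ are the four $\al\times\al$ minors of the $(\al+1)\times(\al+1)$ matrix defining $W_{\al+1,j,k}$ obtained by deleting a top/bottom row together with a left/right column, while $W_{\al-1,j,k}$ is the central $(\al-1)\times(\al-1)$ minor. The entries $T_{1,m,n}$ are indexed so that shifting $j$ moves along antidiagonals and shifting $k$ moves along diagonals of a single doubly-indexed array; this is exactly the setup in which the Desnanot--Jacobi identity expresses the product of the determinant with its central minor in terms of the four corner minors. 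Carrying out this identification of indices is the main technical step, and I expect it to be the principal obstacle: one must check that the argument shifts $j\mapsto j\pm1$, $k\mapsto k\pm1$ correspond correctly to deleting the first/last rows and columns of the bordered matrix, so that Dodgson condensation reproduces \eqref{Tsys} on the nose rather than some sign- or index-shifted variant.

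Once the recursion and base cases are established, the induction closes: assuming $T_{\beta,j,k}=W_{\beta,j,k}$ for all $\beta\le\al$ and all $j,k$, the relation \eqref{Tsys} gives
\begin{equation*}
T_{\al+1,j,k} = \frac{T_{\al,j,k+1}T_{\al,j,k-1}-T_{\al,j+1,k}T_{\al,j-1,k}}{T_{\al-1,j,k}} = \frac{W_{\al,j,k+1}W_{\al,j,k-1}-W_{\al,j+1,k}W_{\al,j-1,k}}{W_{\al-1,j,k}} = W_{\al+1,j,k},
\end{equation*}
using the Desnanot--Jacobi identity in the last equality. Since the paper notes these are standard determinant techniques deferred to the Appendix, I would present the Dodgson condensation identity cleanly as the engine and relegate the careful bookkeeping of the index shifts to that Appendix, taking care only to confirm that the range $\al\in I_r$ together with $T_{r+1,j,k}=1$ is consistent with the top boundary condition (which furnishes an independent check: $W_{r+1,j,k}=1$ should hold, giving a Plücker-type constraint on the fundamental data).
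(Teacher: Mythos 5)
Your proposal is correct and follows essentially the same route as the paper's own proof: the appendix defines $W_{\al,j,k}$ as the discrete Wronskian, applies the Desnanot--Jacobi identity \eqref{desna} to the $(\al+1)\times(\al+1)$ matrix to obtain exactly the recursion \eqref{Tsys}, and concludes by uniqueness of the three-term recursion in $\al$ given the initial data at $\al=0,1$. The only cosmetic difference is that the paper works with the transposed index convention $M_{a,b}=T_{1,j+a-b,k+a+b-\al-2}$, which leaves the determinant unchanged.
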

The proof of this theorem uses the standard Pl\"ucker relations, and is similar 
to the case of the $Q$-system. We therefore present the details of the proof in 
the Appendix, Section \ref{wronskianApp}.

If we consider $\al=r+1$ in Equation \eqref{wronsal}, since $T_{r+1,j;k}=1$, we 
have the polynomial relation among the variables $\{T_{1,j;k}\}$:
\begin{equation}\varphi_{j,k}\equiv \left\vert
\begin{matrix}
T_{1,j,k-r} & T_{1,j-1,k+1-r} & \cdots & T_{1,j-r+1,k-1} & T_{1,j-r,k}\\
T_{1,j+1,k+1-r} & T_{1,j,k+2-r} & \cdots & T_{1,j-r+2,k} & T_{1,j-r+1,k+1}\\
\vdots & \vdots & \ddots & \vdots & \vdots \\
T_{1,j+r-1,k-1} & T_{1,j+r-2,k} & \cdots & T_{1,j,k+r-2} & T_{1,j-1,k+r-1}\\
T_{1,j+r,k} & T_{1,j+r-1,k+1} & \cdots & T_{1,j+1,k+r-1} & T_{1,j,k+r}
\end{matrix} \right\vert =1
\end{equation}
This is the ``equation of motion" for the system. Since $\varphi_{j,k}$ is  a 
discrete  Wronskian
determinant, it remains constant  for solutions of a difference equation.
The difference equation can be  found by taking the difference of two 
Wronskians and arguing that
a non-trivial linear combination of its columns must vanish. 

\begin{thm}\label{recut}
We have the following linear recursion relations
\begin{equation}\label{linrecone}
\sum_{b=0}^{r+1} T_{1,j-b,k+b} (-1)^b c_{r+1-b}(j-k)=0 \quad j,k\in \Z
\end{equation}
where the coefficients $c_{r+1-b}(j-k)$ depend only on the difference $j-k$, 
with $c_0(m)=c_{r+1}(m)=1$ for all $m\in\Z$, and:
\begin{equation}\label{linrectwo}
\sum_{a=0}^{r+1} T_{1,j+a,k+a} (-1)^a d_{r+1-a}(j+k)=0 \quad j,k\in \Z
\end{equation}
where the coefficients $d_{r+1-a}(j+k)$ depend only on the sum $j+k$, 
with $d_0(m)=d_{r+1}(m)=1$ for all $m\in\Z$.
\end{thm}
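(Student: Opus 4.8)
The plan is to establish both recursions from the single fact that $\varphi_{j,k}\equiv 1$ is a conserved quantity, reading it as the statement that consecutive discrete Wronskians of the sequence $\{T_{1,\bullet,\bullet}\}$ agree. Both relations \eqref{linrecone} and \eqref{linrectwo} assert that the $r+2$ vectors built from successive ``shifts'' of the fundamental variable $T_{1,\cdot,\cdot}$ along a fixed lattice direction are linearly dependent, with a dependence whose coefficients depend only on the transverse coordinate ($j-k$ or $j+k$). I would treat the two relations as mirror images of one another under the lattice reflection that swaps the two diagonal directions, prove the first in detail, and obtain the second by symmetry.

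For \eqref{linrecone}, fix the anti-diagonal direction in which $j$ increases and $k$ decreases (so the combination $j+k$ is held fixed within a Wronskian, and $j-k$ labels the family). Introduce the column vectors $v_m=(T_{1,j-a+b,\,k+a+b-\al-1})_a$ that appear in Theorem~\ref{elim}, and observe that the relation $T_{r+1,j,k}=1$ together with $T_{r+2,j,k}=0$ (which follows from the boundary condition $T_{r+1}=1$ fed back into \eqref{wronsal}, making the $(r+2)\times(r+2)$ Wronskian vanish identically) forces the $r+2$ consecutive columns entering that vanishing determinant to be linearly dependent. Expanding $T_{r+2,j,k}=0$ along a row, or equivalently applying the Pl\"ucker/Jacobi relation to the $(r+1)$-minors, yields a linear combination $\sum_{b=0}^{r+1}(-1)^b c_{r+1-b}\,T_{1,j-b,k+b}=0$ in which the coefficients $c_{r+1-b}$ are themselves ratios of $(r+1)$-minors, i.e.\ Wronskians of size $r+1$. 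The normalization $c_0=c_{r+1}=1$ is exactly the statement $\varphi=T_{r+1}=1$ for the two extreme minors.

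The crucial point — and the step I expect to be the main obstacle — is proving that these coefficients $c_{r+1-b}$ depend only on $j-k$ and not on $j+k$, i.e.\ that they are \emph{constant along the anti-diagonal} on which the Wronskian lives. The mechanism is the same as the one that makes $\varphi_{j,k}=1$ a conserved quantity: a discrete Wronskian is invariant under the shift generated by the recursion \eqref{Tsys}. Concretely, I would show that shifting $(j,k)\mapsto(j+1,k-1)$ maps the relevant $(r+1)$-minor to itself (up to the already-accounted sign), so that each $c_{r+1-b}$ is a function of the orthogonal coordinate alone; this is where one argues, as the excerpt indicates, by ``taking the difference of two Wronskians and arguing that a non-trivial linear combination of its columns must vanish.'' The $T$-system equation \eqref{Tsys} is precisely what guarantees that the off-by-one-shifted column lies in the span of the others with the same coefficients, giving the translation invariance of the $c$'s.

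Finally I would deduce \eqref{linrectwo} by the dual argument, now freezing $j-k$ and letting $j+k$ index the family: the determinant \eqref{wronsal} has an evident symmetry under exchanging the two diagonal directions (equivalently, transposing the matrix together with the reflection $j\leftrightarrow k$), which interchanges the roles of $j-k$ and $j+k$ and carries the recursion along $k+b$ into the recursion along $a+a$. Under this reflection the coefficients $c_{r+1-b}(j-k)$ go to $d_{r+1-a}(j+k)$, and the normalization $d_0=d_{r+1}=1$ is inherited from $c_0=c_{r+1}=1$. Since, as the text notes, these are standard determinantal manipulations, I would record the Pl\"ucker-relation bookkeeping in the Appendix and keep the main-text argument at the level of ``vanishing of an $(r+2)$-Wronskian plus translation invariance along one diagonal.''
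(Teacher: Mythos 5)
Your overall architecture is essentially the paper's: the linear dependence comes from the vanishing of an order-$(r+2)$ discrete Wronskian (the paper's appendix realizes this as the difference $\varphi_{j-1,k+1}-\varphi_{j,k}=0$ of two unit Wronskians sharing $r$ columns, while the main text sketches exactly your variant, namely expanding $T_{r+2,j,k}=0$ along a row or column); the normalization $c_0=c_{r+1}=1$ comes from the two extreme minors being unit Wronskians; and the second relation is the mirror argument on rows. The one substantive step, however --- the one you yourself flag as the main obstacle --- is stated in the wrong direction, and as written it would fail.

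The terms $T_{1,j-b,k+b}$ of a single instance of \eqref{linrecone} all lie on one anti-diagonal $j+k=\mathrm{const}$, and the claim that the coefficients depend only on $j-k$ means precisely that they are \emph{not} constant as the window slides along that anti-diagonal: sliding the base point by $(+1,-1)$ changes $j-k$ by $2$, and the explicit $A_1$ formula for $c_1$ confirms the coefficient changes. The invariance to be proved is under the transverse shift $(j,k)\mapsto(j+1,k+1)$, which preserves $j-k$ and moves the whole window to a \emph{different} anti-diagonal. Your plan --- ``show that shifting $(j,k)\mapsto(j+1,k-1)$ maps the relevant $(r+1)$-minor to itself, so that each $c$ is a function of the orthogonal coordinate alone'' --- would, if it succeeded, prove that the coefficients depend only on $j+k$ (the statement appropriate to the $d$'s of \eqref{linrectwo}, not the $c$'s of \eqref{linrecone}), and in fact the minors are not invariant under that shift. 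The same inversion appears in your opening paragraph (``coefficients depend only on the transverse coordinate''): they depend on the coordinate running \emph{along} the recursion direction and are conserved transversally. The correct mechanism, which is the paper's, is: under $(j,k)\mapsto(j+1,k+1)$ the columns entering the dependence satisfy $(\vf_b(j,k))_{a+1}=(\vf_b(j+1,k+1))_a$, i.e.\ they are the same infinite column vectors read one row lower; since $\varphi=T_{r+1}=1\neq 0$ forces the vanishing linear combination to be unique up to scale, the normalized coefficients at $(j,k)$ and $(j+1,k+1)$ must coincide, whence they depend on $j-k$ only. Once you disentangle the two shifts --- $(-1,+1)$ produces the linear dependence (consecutive unit Wronskians share $r$ columns), while $(+1,+1)$ is the direction of invariance of its coefficients --- your argument goes through and coincides with the paper's.
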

 Such linear recursion relations can be obtained by noting that $T_{r+2,j,k}=0$ 
and expanding the corresponding Wronskian determinant along the first row or 
column. The key fact to be proven is that the minors depend only on the 
difference $j-k$ or the sum $j+k$.
The proof is presented in the Appendix, Section \ref{linearrecursion}.

By analogy with the case of the $Q$-systems \cite{DFK3,DFK4}, we may still 
call the variables $c_b(k)$ and $d_b(k)$ integrals
of motion of the $T$-system, as they depend on one less variable than $T$.
 Moreover, they can be expressed entirely
in terms of the fundamental
initial data for the $T$-system, $\wx_0$.

\begin{example}
In the $A_1$ case, we have
\begin{eqnarray*}
T_{1,j,k} -c_1(j-k) T_{1,j-1,k+1}+T_{1,j-2,k+2} &=&0\\
T_{1,j,k} -d_1(j+k) T_{1,j+1,k+1}+T_{1,j+2,k+2} &=&0
\end{eqnarray*}
with the integrals of motion
\begin{eqnarray*}
c_1(j)&=&{T_{1,j,0}\over T_{1,j-1,1}}+{1\over T_{1,j-1,1}T_{1,j-2,0}}
+{T_{1,j-3,1}\over T_{1,j-2,0}}\\
d_1(j)&=&{T_{1,j,0}\over T_{1,j+1,1}} +{1\over T_{1,j+1,1}T_{1,j+2,0}}
+{T_{1,j+3,1}\over T_{1,j+2,0}}
\end{eqnarray*}
\end{example}

An explicit expression for the conserved quantities  of 
Theorem \ref{recut} is as Wronskian determinants with a ``defect":
\begin{lemma}\label{conwron}
The conserved quantities $c_m(j)$ ($m=0,1,...,r+1$, $j\in \Z$) of Equation 
\eqref{linrecone} are 
\begin{equation}\label{consW}
c_m(j)=\det_{1\leq a\leq r+1\atop 1\leq b\leq r+2,\ b\neq r+2-m}\,  \left(T_{1,j+n
+a-b,n+a+b-2}\right)
\end{equation}
for any $n\in \Z$.
\end{lemma}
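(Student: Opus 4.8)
The plan is to establish the claimed Wronskian-with-defect formula for the conserved quantities $c_m(j)$ by connecting it to the linear recursion \eqref{linrecone} of Theorem \ref{recut}. The key observation is that Theorem \ref{recut} produces the $c_b$ essentially as the coefficients in a linear recursion satisfied by the sequence $\{T_{1,j-b,k+b}\}$, and such coefficients are precisely expressible by Cramer's rule. First I would set up the $(r+2)$-term recursion and observe that, since $c_0(m)=c_{r+1}(m)=1$ by normalization, the relation \eqref{linrecone} can be viewed as a system of linear equations for the intermediate coefficients $c_1,\dots,c_r$. The standard way to extract these coefficients is to consider the $(r+1)\times(r+2)$ matrix whose columns are successive ``shifts'' of the fundamental data $T_{1,\bullet,\bullet}$, and solve for the coefficients via the alternating signed minors obtained by deleting columns.

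\emph{The main steps I would carry out.} First, I would write down the defining relation from Theorem \ref{recut} applied along a whole strip of values, generating enough instances of \eqref{linrecone} to pin down the coefficients uniquely. Second, I would recognize that the vanishing of $T_{r+2,j,k}$ (via Theorem \ref{elim}, the Wronskian formula \eqref{wronsal} extended to $\al=r+2$) is exactly the statement that an $(r+2)\times(r+2)$ Wronskian determinant in the $T_{1,\bullet,\bullet}$ vanishes. Expanding \emph{that} determinant along its first row (or first column) produces a linear relation among the $r+2$ entries of that row whose coefficients are, up to sign, the $(r+1)\times(r+1)$ complementary minors --- precisely the defect-Wronskians $c_m(j)$ written in \eqref{consW}, where deleting column $b=r+2-m$ corresponds to the cofactor of the $m$-th entry. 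Third, I would verify the index bookkeeping: that the entry pattern $T_{1,j+n+a-b,\,n+a+b-2}$ in \eqref{consW} matches the minors of the $T_{r+2}$-Wronskian of \eqref{wronsal}, and that the antidiagonal shift structure makes the result independent of the auxiliary parameter $n$ (this is the same translation-covariance already used to prove that the coefficients depend only on $j-k$ in Theorem \ref{recut}).

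\emph{Where the real work lies.} The main obstacle is the careful index matching and the proof of $n$-independence. Theorem \ref{recut} already asserts that the coefficients depend only on $j-k$, so the content of Lemma \ref{conwron} is the \emph{explicit} determinantal form plus the claim that one may choose \emph{any} $n$; these two facts are intertwined. I expect to handle $n$-independence by showing that incrementing $n$ shifts all indices in a way that permutes rows/columns of the defect-Wronskian by a common diagonal translation, which (because the omitted column moves in lockstep) leaves the determinant unchanged --- or alternatively, by invoking the uniqueness of the recursion coefficients: since the $c_m(j)$ are uniquely determined by \eqref{linrecone} as functions of $j-k$ alone, and each choice of $n$ yields via cofactor expansion a valid set of coefficients for the \emph{same} recursion, they must all coincide. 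Finally I would check the boundary normalizations $c_0=c_{r+1}=1$ against the formula: the extreme deletions $b=r+2$ and $b=1$ give Wronskians that by Theorem \ref{elim} equal $T_{r+1,\bullet,\bullet}=1$, confirming the endpoints. Since these are standard determinant manipulations of exactly the type flagged in the excerpt as being relegated to the Appendix, I would present only the structural argument here and defer the bordered-determinant and Plücker computations.
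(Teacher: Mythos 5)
Your proposal is correct and follows essentially the same route as the paper's proof in Section \ref{conservedwronskian}: a vanishing $(r+2)\times(r+2)$ bordered determinant is expanded along its first row, the cofactors are recognized as the defect-Wronskians $\gamma_m(j,n)$, and the resulting relation is identified with \eqref{linrecone} by uniqueness of the recursion coefficients (guaranteed by $T_{r+1,j,k}=1\neq0$) together with the endpoint normalization $\gamma_0=\gamma_{r+1}=1$ from Theorem \ref{elim}. The only cosmetic difference is how the vanishing determinant is produced: you invoke $T_{r+2,j,k}=0$ via the extended Wronskian formula, while the paper builds a matrix $D$ with an independently shifted first row and observes that all its rows satisfy the same column relation \eqref{linrecone}; both devices yield the same cofactor identity, and your treatment of $n$-independence via uniqueness matches the paper's.
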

Again the proof uses the standard techniques, and is found in Section 
\ref{conservedwronskian} of the Appendix.

\section{Conserved quantities and hard particles}\label{sec:conserved}

\subsection{Recursion relations for conserved quantities}
The conserved quantities \eqref{consW} satisfy linear recursion relations, 
which allow us to express them in terms of 
the initial data $\bx_0$.  We use recursion
relations on the size $r$, so we first relax the boundary conditions $T_{r
+1,j,k}=1$ for all $j,k\in\Z$. 

Consider the $A_{\infty/2}$
$T$-system:
\begin{eqnarray}\label{ainfTsys}
\ \ t_{\al,j,k+1}t_{\al,j,k-1}&=&t_{\al,j+1,k}t_{\al,j-1,k}+t_{\al+1,j,k}t_{\al-1,j,k}, 
\quad  t_{0,j,k}=1,\ (j,k\in \Z, \, \al\in\Z_{>0}).
\end{eqnarray}
Solutions of this system are expressible in terms of the initial data $(t_{\al,j,
0},t_{\al,j,1})_{\al\in \Z_{>0},j\in\Z}$.
By definition,  $T_{\al,j,k}=t_{\al,j,k}$ if we impose the boundary condition
$t_{r+1,j,k}=1$ for all $j,k\in\Z$. 

The proof of Theorem \ref{elim} does not involve
the boundary condition $T_{r+1,j;k}=1$, so the determinant expression for 
$t_{\al,j;k}$ still holds: 
\begin{equation}
t_{\al,j,k}=\det_{1\leq a,b\leq \al} \, \left(t_{1,j+a-b,k+a+b-\al-1}\right), \quad 
\al>1.
\end{equation}

Define the Wronskians of size $N$ with a defect in position $N-m$:
\begin{equation}\label{defcamjk}
c_{N,m,j,k}=\det_{1\leq a\leq N\atop 1\leq b\leq N+1,\, b\neq N+1-m}\, 
\left(t_{1,j+a-b,k+a+b-N-1}\right),
\end{equation}
where $c_{N,m,j,k}=0$ if $m>N$ or $m<0$. 

Note that 
$c_{N,0,j,k}=T_{N,j,k}$, and $c_{N,N,j,k}=T_{N,j-1,k+1}$ by Theorem 
\ref{elim}.
If we impose the second boundary condition of the $T$-system on the $t$'s, 
then 
$c_{r+1,m,j,k}=c_{m}(j-k+r)$. 

\begin{lemma}\label{lemrec}
The Wronskians with a defect $c_{\al,m,j,k}$ satisfy the following recursion 
relations:
\begin{eqnarray}
t_{\al-1,j-1,k-1}\, c_{\al-1,m,j,k}&=&t_{\al,j-1,k}\, c_{\al-2,m-1,j,k-1}+t_{\al-1,j,k}\, 
c_{\al-1,m,j-1,k-1}\label{firstrec}\\
t_{\al-1,j-1,k}\, c_{\al,m,j,k}&=&t_{\al,j-1,k+1}\, c_{\al-1,m-1,j,k-1}+t_{\al,j,k}\, 
c_{\al-1,m,j-1,k}\label{secrec}
\end{eqnarray} 
for $\al\geq 2$ and $m,j,k\geq 1$.
\end{lemma}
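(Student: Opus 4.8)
The plan is to prove both recursion relations \eqref{firstrec} and \eqref{secrec} as instances of the \emph{Desnanot--Jacobi (Dodgson condensation) identity} applied to suitable bordered determinants built from the entries $t_{1,p,q}$. Recall that for any square matrix $M$, writing $M^{i}_{j}$ for the minor obtained by deleting row $i$ and column $j$, one has
\begin{equation*}
\det(M)\,\det\left(M^{1,N}_{1,N}\right)=\det\left(M^{1}_{1}\right)\det\left(M^{N}_{N}\right)-\det\left(M^{1}_{N}\right)\det\left(M^{N}_{1}\right).
\end{equation*}
The key observation is that all the Wronskians $c_{\al,m,j,k}$ appearing in Lemma~\ref{lemrec}, together with the fused variables $t_{\al,j,k}=c_{\al,0,j,k}$ (which by Theorem~\ref{elim} are themselves defect-free Wronskians), are minors of one common $(\al+1)\times(\al+1)$ matrix whose $(a,b)$ entry is $t_{1,j+a-b,\,k+a+b-\al-1}$ for an appropriate shift of indices. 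So my first step is to identify, for each of the two relations, the ambient matrix and the precise rows/columns whose deletion yields each of the five Wronskians in the identity.

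For \eqref{firstrec}, I would start from the $(\al{+}1)$-minor structure that computes $t_{\al-1,\cdot,\cdot}$ and $c_{\al-1,m,\cdot,\cdot}$: these are both $(\al{-}1)\times(\al{-}1)$ and $(\al{-}1)\times\al$ objects, so I expect the natural ambient array to be the $\al\times(\al{+}1)$ defect-Wronskian window, from which the left-hand factor $t_{\al-1,j-1,k-1}\,c_{\al-1,m,j,k}$ and the two right-hand products arise by deleting a corner row and a corner column in the four possible ways. The recursion is a three-term (rather than the generic four-term Desnanot--Jacobi) relation, which signals that one of the four corner minors in the condensation identity either vanishes or, more likely, factors as a product of a boundary term $t_{0,\cdot,\cdot}=1$ with another Wronskian; tracking exactly which corner degenerates is what produces the clean two-term right-hand side. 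I would carry out the same bookkeeping for \eqref{secrec}, where the shift pattern $k{+}1$ versus $k$ in the two right-hand terms indicates a transposed or diagonally-shifted choice of the deleted column relative to \eqref{firstrec}.

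The main obstacle, and the step I would spend the most care on, is the index accounting: matching the arguments $(j,k)$, $(j{-}1,k{-}1)$, $(j,k{-}1)$, $(j{-}1,k)$, $(j,k{+}1)$ appearing on the two sides against the row/column deletions in a single fixed matrix, while respecting the defect position $b\neq N{+}1{-}m$ in \eqref{defcamjk}. Because the defect column is removed from an otherwise ``rectangular-looking'' Wronskian, I must verify that deleting a further row or column interacts correctly with the missing defect column—in particular that the defect offsets $m$, $m{-}1$, $m$ on the right-hand side are exactly what the Desnanot--Jacobi surviving minors produce. I would handle this by fixing $n$ (using the stated $n$-independence of $c_{N,m,j,k}$, or by a direct shift argument) and normalizing the antidiagonal argument $k+a+b-N-1$ so that the corner entries are identifiable; once the five minors are correctly labeled, the two identities follow immediately from the condensation formula together with the boundary conventions $t_{0,j,k}=1$ and $c_{N,m,j,k}=0$ for $m<0$ or $m>N$, which is presumably how degenerate corner terms are absorbed. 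As the statement notes, these are standard determinant manipulations, so after establishing the dictionary I expect no further conceptual difficulty; the verification is, as promised in the text, relegated to the Appendix.
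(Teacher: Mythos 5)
Your plan is on target for the first relation but has a real gap for the second. For \eqref{firstrec} you are essentially reproducing the paper's argument: the paper applies the Desnanot--Jacobi relation \eqref{desna} with $N=\al$, $i_1=1$, $i_2=\al$, $j_1=\al-m$, $j_2=\al$ to the $\al\times\al$ matrix $M_{a,b}=T_{1,j+a-b-1,k+a+b-\al-1}$, and the size pattern $(\al-1)(\al-1)-(\al-1)(\al-1)=\al\cdot(\al-2)$ matches \eqref{firstrec} exactly. Two small corrections to your description: Desnanot--Jacobi is already a three-product identity, so nothing needs to degenerate or be absorbed by $t_{0,\cdot,\cdot}=1$ to get a three-term relation; and the deleted column $j_1=\al-m$ is the interior defect position, not a corner (only the deleted rows are extremal), so the ``four corner deletions'' picture is not the right bookkeeping.

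The gap is in \eqref{secrec}, which you propose to handle by ``the same bookkeeping'' with a transposed or shifted column choice. That cannot work: the determinant sizes in \eqref{secrec} are $(\al-1)\text{-det}\times\al\text{-det}=\al\text{-det}\times(\al-1)\text{-det}+\al\text{-det}\times(\al-1)\text{-det}$, whereas Dodgson condensation of any square array of $t_{1,p,q}$-entries produces the pattern $N\text{-det}\times(N-2)\text{-det}=(N-1)\text{-det}\times(N-1)\text{-det}+\cdots$; the two factors on the left differ by $2$ in size, not by $1$, for every choice of rows and columns. The paper instead invokes the genuine Pl\"ucker relation \eqref{specplu} for the non-square $\al\times(\al+2)$ matrix obtained by adjoining the coordinate-vector column $P_{a,1}=\delta_{a,\al}$ to the Wronskian window $P_{a,b}=T_{1,j+a-b,k+a+b-\al-1}$, choosing $a_1=1$, $a_2=2$, $b_1=\al+2-m$, $b_2=\al+2$. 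The point of the adjoined column is that any minor retaining it collapses, by expansion along that column, from an $\al$-row determinant to an $(\al-1)$-row determinant, which is precisely what produces the mixed size pattern of \eqref{secrec}. Your opening mention of ``bordered determinants'' gestures in this direction, but you never identify the coordinate-vector column as the needed ingredient, and the concrete plan you state (a single common square matrix of $t_{1}$-entries, condensed with a different column choice) would fail on size grounds before any index accounting begins. The index bookkeeping you flag as the main obstacle is indeed routine once the correct ambient matrices are fixed, but fixing them is the actual content of the proof.
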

\begin{proof} The first equation \eqref{firstrec}
follows from the Desnanot-Jacobi relation \eqref{desna}, 
with $N=\al$, $i_1=1$, $i_2=\al$,
$j_1=\al-m$, $j_2=\al$, for the matrix $M$ with entries $M_{a,b}=T_{1,j+a-
b-1,k+a+b-\al-1}$, 
$a,b=1,2,...,\al$. 

The second equation \eqref{secrec} follows from the Pl\"ucker relation
\eqref{specplu}, with $N=\al$, and the $N\times (N+2)$
matrix $P$ with entries $P_{a,1}=\delta_{a,\al}$, and $P_{a,b}= T_{1,j+a-b,k+a
+b-\al-1}$ for
$b=2,3,...,\al+2$ and $a=1,2,...,\al$, and by further
picking $a_1=1$, $a_2=2$, $b_1=\al+2-m$, and $b_2=\al+2$.
\end{proof}

\begin{thm}\label{recutheo}
The Wronskians with a defect $c_{\al,m,j,k}$ defined in Equation 
\eqref{defcamjk}
are uniquely determined by the following recursion relation, for $\al\geq 2$:
\begin{eqnarray}\label{recuho}
&&t_{\al-1,j-1,k-1}\, t_{\al-1,j,k}\, c_{\al,m,j,k-1}=
t_{\al-1,j-1,k-1}\, t_{\al,j-1,k}\, c_{\al-1,m-1,j+1,k-1} \nonumber \\
&&\qquad \qquad\qquad \qquad+t_{\al,j,k-1}\, t_{\al-1,j,k}\, c_{\al-1,m,j-1,k-1}+ 
t_{\al,j,k-1}\, t_{\al,j-1,k}
\, c_{\al-2,m-1,j,k-1}
\end{eqnarray}
and the boundary conditions $c_{0,m,j,k}=\delta_{m,0}$, for all $j,k\in \Z$ and 
$c_{1,m,j,k}=\delta_{m,0}\, t_{1,j,k}+\delta_{m,1}\, t_{1,j-1,k+1}$
for all $m,j,k\in\Z$.
\end{thm}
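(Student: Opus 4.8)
The plan is to prove two things separately: that the defect-Wronskians $c_{\al,m,j,k}$ of \eqref{defcamjk} satisfy the recursion \eqref{recuho} together with the stated boundary data, and that these conditions determine the whole family $\{c_{\al,m,j,k}\}$ uniquely. Uniqueness is the soft half: the leading coefficient $t_{\al-1,j-1,k-1}\,t_{\al-1,j,k}$ on the left of \eqref{recuho} is a product of invertible variables, so the relation solves for $c_{\al,m,j,k-1}$ as a combination of size-$(\al-1)$ and size-$(\al-2)$ Wronskians, and as $k$ ranges over $\Z$ every size-$\al$ quantity $c_{\al,m,j,k}$ occurs as such a left-hand side; hence an induction on $\al$ starting from the size-$0$ and size-$1$ data pins down everything. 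The boundary data itself is read directly off \eqref{defcamjk}: for $N=0$ the matrix is empty so $c_{0,m,j,k}=\delta_{m,0}$, and for $N=1$ the matrix is $1\times2$ with columns $b\in\{1,2\}$, so deleting column $b=2-m$ gives $c_{1,0,j,k}=t_{1,j,k}$ and $c_{1,1,j,k}=t_{1,j-1,k+1}$.

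The core is to derive \eqref{recuho} from Lemma \ref{lemrec}. I would take \eqref{secrec} with $k\to k-1$ and multiply through by $t_{\al-1,j,k}$. This at once reproduces the left-hand side $t_{\al-1,j-1,k-1}\,t_{\al-1,j,k}\,c_{\al,m,j,k-1}$ of \eqref{recuho} and its middle term $t_{\al,j,k-1}\,t_{\al-1,j,k}\,c_{\al-1,m,j-1,k-1}$. Matching the two remaining terms of \eqref{recuho}, after cancelling the common factor $t_{\al,j-1,k}$, reduces to the single \emph{auxiliary identity}
\[
t_{\al-1,j,k}\,c_{\al-1,m-1,j,k-2}=t_{\al-1,j-1,k-1}\,c_{\al-1,m-1,j+1,k-1}+t_{\al,j,k-1}\,c_{\al-2,m-1,j,k-1},
\]
a three-term bilinear Wronskian relation holding at the constant defect $m-1$.

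Establishing this auxiliary identity is the main obstacle, precisely because the defect $m-1$ is held fixed across the size-$(\al-1)$ and size-$(\al-2)$ terms, whereas both relations of Lemma \ref{lemrec} shift the defect by one in passing between consecutive sizes. I expect it to follow from the same determinant technology --- Desnanot-Jacobi and Pl\"ucker relations on the matrix $(t_{1,j+a-b,k+a+b-N-1})$ --- used to prove Lemma \ref{lemrec}, with the deleted columns now chosen so as to leave defect $m-1$ on both minors. As a consistency check, at $\al=2$ the only non-trivial case is $m=1$, where the identity reads $t_{1,j,k}\,t_{1,j,k-2}=t_{1,j+1,k-1}\,t_{1,j-1,k-1}+t_{2,j,k-1}$, exactly the $\al=1$ instance of \eqref{ainfTsys} shifted by $k\to k-1$. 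This confirms both the identity and that the $T$-system relation is its ultimate input, suggesting as an alternative route a direct induction on $\al$ that feeds \eqref{ainfTsys} into the two recursions of Lemma \ref{lemrec} at each step.
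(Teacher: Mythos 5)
Your uniqueness argument and your verification of the boundary data are fine and match the paper's. Your reduction is also algebraically correct: substituting $k\to k-1$ in \eqref{secrec} and multiplying by $t_{\al-1,j,k}$ does reproduce the left-hand side and the middle term of \eqref{recuho}, and after cancelling $t_{\al,j-1,k}$ what remains is exactly your auxiliary identity
$t_{\al-1,j,k}\, c_{\al-1,m-1,j,k-2}=t_{\al-1,j-1,k-1}\, c_{\al-1,m-1,j+1,k-1}+t_{\al,j,k-1}\, c_{\al-2,m-1,j,k-1}$.
The problem is that this identity is the entire content of the theorem beyond Lemma \ref{lemrec}, and you do not prove it: you check it at $\al=2$, $m=1$, observe that it is then the $T$-system relation, and assert that in general it should follow from ``the same determinant technology.'' That is a genuine gap, not a routine verification. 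The identity is not an instance of either relation of Lemma \ref{lemrec} (both of those shift the defect between the two sizes, whereas yours holds the defect fixed at $m-1$ on all three terms), and setting up the right Pl\"ucker/Desnanot--Jacobi configuration to produce two defect minors of equal defect together with the full Wronskians $t_{\al-1,j,k}$, $t_{\al-1,j-1,k-1}$, $t_{\al,j,k-1}$ requires a concrete choice of matrix and deleted columns that you have not exhibited. Your own $\al=2$ check already shows that the $T$-system relation \eqref{ainfTsys} must enter, so this is not a formal consequence of the lemma alone.

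For comparison, the paper takes a different reduction: it uses \eqref{firstrec} to collapse the \emph{last two} terms of the right-hand side of \eqref{recuho} into $t_{\al,j,k-1}t_{\al-1,j-1,k-1}c_{\al-1,m,j,k}$, cancels the common factor $t_{\al-1,j-1,k-1}$, and is left with the identity \eqref{reduone}, which it then proves by multiplying by $t_{\al,j+1,k}$, substituting the $T$-system relation \eqref{ainfTsys} for $t_{\al,j,k+1}t_{\al,j,k-1}$, and regrouping so that \eqref{firstrec} and \eqref{secrec} each eliminate one bracket. If you want to salvage your route, the analogous move would be to multiply your auxiliary identity by a suitable $t_{\al,\cdot,\cdot}$, invoke \eqref{ainfTsys}, and reduce to the two relations of Lemma \ref{lemrec}; but as written, the central step is missing.
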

\begin{proof}
Using Equation \eqref{firstrec},
 the second line in \eqref{recuho} is equal to 
$t_{\al,j,k-1}t_{\al-1,j-1,k-1}c_{\al-1,m,j,k}$. Canceling
the overall factor $t_{\al-1,j-1,k-1}$, we must prove that
\begin{equation}\label{reduone}
t_{\al-1,j,k}\, c_{\al,m,j,k-1}-( t_{\al,j-1,k}\, c_{\al-1,m-1,j+1,k-1}+t_{\al,j,k-1}\, 
c_{\al-1,m,j,k})=0.
\end{equation}
Multiplying the l.h.s. of  \eqref{reduone} by $t_{\al,j+1,k}$ and using 
\eqref{ainfTsys}
we have
\begin{eqnarray*}\label{calcul}
&&t_{\al,j+1,k}t_{\al-1,j,k}c_{\al,m,j,k-1}
-(t_{\al,j,k+1}t_{\al,j,k-1}-t_{\al+1,j,k}t_{\al-1,j,k}) c_{\al-1,m-1,j+1,k-1}\nonumber 
\\
&&-t_{\al,j+1,k}t_{\al,j,k-1}c_{\al-1,m,j,k}\nonumber \\
&=&t_{\al-1,j,k}(t_{\al,j+1,k} c_{\al,m,j,k-1}+t_{\al+1,j,k} c_{\al-1,m-1,j+1,k-1})
\nonumber \\
&&-t_{\al,j,k-1}(t_{\al,j,k+1}c_{\al-1,m-1,j+1,k-1}+t_{\al,j+1,k}c_{\al-1,m,j,k})
\nonumber \\
&=&t_{\al,j,k-1}(t_{\al-1,j,k}c_{\al,m,j+1,k}-t_{\al,j,k+1}c_{\al-1,m-1,j+1,k-1}-
t_{\al,j+1,k}c_{\al-1,m,j,k})=0
\end{eqnarray*}
where we have simplified the third line by use of \eqref{firstrec}, and finally 
used \eqref{secrec}. 
Equation  \eqref{recuho} follows.

Equation \eqref{recuho} is a three-term linear recursion relation in the variable 
$\al$
and therefore has a unique solution $c$ given the initial conditions at $
\al=0,1$.
Moreover, these initial conditions are identical to those for \eqref{defcamjk},
hence this solution coincides with the definition \eqref{defcamjk} for all $
\al,m,j,k$.
\end{proof}

Let us define:
\begin{equation}\label{quantities}
C_{\al+1,m}(j,k)= {c_{\al+1,m,j+k-\al,k}\over t_{\al+1,j+k-\al,k}}, \qquad \al\geq 
0,\quad m,j,k\in\Z.
\end{equation}
These satisfy  $C_{\al+1,0}(j,k)=1$ and 
$C_{\al+1,\al+1}(j,k)=t_{\al+1,j+k-\al-1,k+1}/t_{\al+1,j+k-\al,k}$. 
The conserved quantities
of the $A_r$ $T$-system are obtained by imposing the boundary condition 
$t_{r+1,j,k}=1$, in which
case: $c_m(j)=C_{r+1,m}(j,k)$ for any $j\in\Z$, and $m=0,1,2,...,r+1$, 
independently
of $k\in\Z$.

\begin{cor}\label{correcu}
The quantities $C_{\al+1,m}(j,k)$ of eq.\eqref{quantities} are the
solutions of the following linear recursion relation, for $\al\geq 1$:
\begin{equation}\label{recuhocor}
C_{\al+1,m}(j,k)=C_{\al,m}(j-2,k)+
y_{2\al+1}(j-\al,k)\, C_{\al,m-1}(j,k) + y_{2\al}(j-\al-1,k)\, C_{\al-1,m-1}(j-2,k)
\end{equation}
with coefficients:
\begin{eqnarray}\label{timey}
y_{2\al+1}(j,k)&=&{t_{\al+1,j+k-1,k+1}\, t_{\al,j+1+k,k}\over t_{\al+1,j+k,k}\, 
t_{\al,j+k,k+1}}\quad
(\al\geq 1)\nonumber \\
y_{2\al}(j,k)&=&{t_{\al+1,j+k,k+1}\, t_{\al-1,j+k+1,k}\over t_{\al,j+k,k} \, t_{\al,j+k
+1,k+1}}
\quad (\al\geq 1)\nonumber \\
y_1(j,k)&=&{t_{1,j+k,k+1}\over t_{1,j+k+1,k}}\ ,
\end{eqnarray}
subject to the initial conditions $C_{0,m}(j,k)=\delta_{m,0}$ and 
$C_{1,m}(j,k)=\delta_{m,0}+\delta_{m,1}\, y_1(j-1,k)$.
\end{cor}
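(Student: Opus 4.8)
The plan is to derive the recursion for $C_{\al+1,m}(j,k)$ by substituting the definition \eqref{quantities} into the already-established recursion \eqref{recuho} of Theorem \ref{recutheo} and then rewriting the $t$-ratios as the coefficients $y_\bullet$ of \eqref{timey}. Concretely, I would start from \eqref{recuho}, divide both sides by an appropriate product of $t$'s so that every $c$-term becomes a $C$-term, and then evaluate indices so that the shifts $j\mapsto j+k-\al$ used in \eqref{quantities} are correctly propagated. The main bookkeeping task is to track the diagonal substitution: in \eqref{quantities} the first index of $c$ is $j+k-\al$, so lowering $\al$ by one or two shifts both the ``$j$'' slot and the value of $j+k-\al$, which is why the arguments on the right-hand side of \eqref{recuhocor} appear as $(j-2,k)$ for the $\al$-step and $(j-2,k)$ again for the $\al-1$ step while the middle term keeps $(j,k)$.

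First I would set, in \eqref{recuho}, the substitution matching \eqref{quantities}: replace the free indices there by the values that make $c_{\al+1,m,\,\cdot\,,\,\cdot}$ equal $t_{\al+1,j+k-\al,k}\,C_{\al+1,m}(j,k)$, i.e. take the first index to be $j+k-\al$ and the third index to be $k$. After this substitution each $c$ on the right-hand side of \eqref{recuho} carries specific shifted indices; I would then express each as $t\cdot C$ using \eqref{quantities}, being careful that $c_{\al-1,\dots}$ pulls out a factor $t_{\al-1,\dots}$ and $c_{\al-2,\dots}$ a factor $t_{\al-2,\dots}$ with the diagonal index recomputed. Dividing through by $t_{\al-1,j-1,k-1}\,t_{\al-1,j,k}$ (the product multiplying $c_{\al,m,j,k-1}$ on the left) then isolates $C_{\al+1,m}(j,k)$, and the surviving $t$-prefactors on the right collect into exactly the ratios defining $y_{2\al+1}$ and $y_{2\al}$. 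The first term $C_{\al,m}(j-2,k)$ should emerge with prefactor $1$ after the cancellation.

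The step I expect to be the genuine obstacle is matching the $t$-ratio prefactors to the closed forms in \eqref{timey}. This is not automatic: after the division the coefficients will appear as products of four $t$'s evaluated at indices that must be simplified using the shift conventions, and verifying that they equal $y_{2\al+1}(j-\al,k)$ and $y_{2\al}(j-\al-1,k)$ respectively requires careful index arithmetic (in particular confirming the arguments $j-\al$ and $j-\al-1$ rather than $j$). I would handle this by writing each $y$ in \eqref{timey} back in terms of $t$'s at the specialized arguments and checking term-by-term agreement, using the $T$-system relation \eqref{ainfTsys} only if a genuine simplification (rather than a pure relabeling) is needed.

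The boundary conditions are the easy part and I would dispatch them last: $C_{0,m}(j,k)=\delta_{m,0}$ follows directly from $c_{0,m,j,k}=\delta_{m,0}$ together with $t_{0,j,k}=1$, and $C_{1,m}(j,k)=\delta_{m,0}+\delta_{m,1}\,y_1(j-1,k)$ follows from the stated value $c_{1,m,j,k}=\delta_{m,0}\,t_{1,j,k}+\delta_{m,1}\,t_{1,j-1,k+1}$ by dividing by $t_{1,j+k-0,k}=t_{1,j+k,k}$ evaluated at the substituted first index and recognizing the resulting ratio $t_{1,j+k-1,k+1}/t_{1,j+k,k}$ as $y_1(j-1,k)$ from the last line of \eqref{timey}. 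Since \eqref{recuho} together with these initial data determine $c$ uniquely (Theorem \ref{recutheo}), the same uniqueness transfers to $C$, so establishing the recursion and the two boundary values completes the proof.
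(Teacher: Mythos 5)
Your proposal is correct and is exactly the route the paper intends: Corollary \ref{correcu} is stated without a separate proof precisely because it follows from Theorem \ref{recutheo} by replacing $\al$ with $\al+1$ in \eqref{recuho}, specializing the free indices so that the left-hand side becomes $c_{\al+1,m,j+k-\al,k}$, converting each $c$ to $t\cdot C$ via \eqref{quantities}, and dividing by the resulting prefactor of $C_{\al+1,m}(j,k)$. The index bookkeeping you flag as the main risk does close up as pure relabeling (the surviving ratios are exactly $y_{2\al+1}(j-\al,k)$ and $y_{2\al}(j-\al-1,k)$, with no use of \eqref{ainfTsys} required), and the boundary values follow from $c_{0,m,j,k}=\delta_{m,0}$ and the stated $c_{1,m,j,k}$ together with $t_{0,j,k}=1$, just as you describe.
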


\begin{example}\label{examC}
We have the following first few values of $C_{\al+1,m}(j,k)$:
\begin{eqnarray*}
\al=0:\ \ C_{1,0}(j,k)&=&1\\
C_{1,1}(j,k)&=&y_1(j-1,k)\\
\al=1:\ \ C_{2,0}(j,k)&=&1\\
C_{2,1}(j,k)&=&y_1(j-3,k)+y_2(j-2,k)+y_3(j-1,k)\\
C_{2,2}(j,k)&=&y_1(j-1,k)y_3(j-1,k)\\
\al=2:\ \ C_{3,0}(j,k)&=&1\\
C_{3,1}(j,k)&=&y_1(j-5,k)+y_2(j-4,k)+y_3(j-3,k)+y_4(j-3,k)+y_5(j-2,k) \\
C_{3,2}(j,k)&=&y_1(j-3,k)y_3(j-3,k)+y_4(j-3,k)y_1(j-3,k)\\
&&+y_5(j-2,k)(y_1(j-3,k)+y_2(j-2,k)+y_3(j-1,k))\\
C_{3,3}(j,k)&=&y_1(j-1,k) y_3(j-1,k) y_5(j-2,k) 
\end{eqnarray*}
\end{example}

\begin{remark}
The Corollary \ref{correcu} allows to interpret the conserved quantities of the 
$A_r$ $T$-system
as follows. From the recursion relation \eqref{recuhocor}, we deduce that 
$C_{r+1,m}(j,k)$
is a homogeneous polynomial of the weights $y_1,y_2,...,y_{2r+1}$, 
themselves ratios of
products of some $t_{a,b,c}$'s with $c$ only taking the values $k$ and $k+1$. 
If we impose
$t_{r+1,j,k}=1$, we see that, as explained above, $C_{r+1,m}(j,k)=c_m(j)$ is 
independent 
of $k$. We may therefore
write $C_{r+1,m}(j,k)=C_{r+1,m}(j,0)$, the latter involving only $T_{a,b,c}$'s 
with $c=0,1$.
These give $r$ conservation laws for $m=1,2,...,r$. For $r=1$, we have for 
instance
\begin{eqnarray*}
C_{2,1}(j,k)&=&{T_{1,j+k,k}\over T_{1,j+k-1,k+1}}+{1\over T_{1,j+k-1,k
+1}T_{1,j+k-2,k}}
+{T_{1,j+k-3,k+1}\over T_{1,j+k-2,k}}\\
&=&C_{2,1}(j,0)={T_{1,j,0}\over T_{1,j-1,1}}+{1\over T_{1,j-1,1}T_{1,j-2,0}}
+{T_{1,j-3,1}\over T_{1,j-2,0}}\\
\end{eqnarray*}
\end{remark}

\subsection{Hard particle interpretation}

In this paper, we introduce a slightly generalized model of hard particles on a 
graph. 
\subsubsection{Definition of the model}
\begin{figure}
\centering
\includegraphics[width=12.cm]{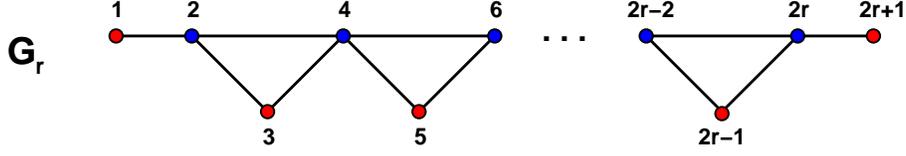}
\caption{The graph $G_r$,
with $2r+1$ vertices labeled $i=1,2,...,2r+1$.}
\label{fig:graphgr}
\end{figure}

Let $G_r$
be the graph 
of Figure \ref{fig:graphgr}, 
with vertices labeled as shown. 
When $r=1$, $G_1$ is just the chain with 3 vertices, and
when $r=0$ $G_0$ is a single vertex.

To each vertex labeled $i$ in $G_r$, we associate a height function
$h$, where 
$$
h(i)=\left\lfloor {i+1\over 2} \right\rfloor, \ (i>1), \ h(1)=0.
$$

A {\em configuration of hard particles} on $G_r$ is a subset $S$
of $I_{2r+1}$ such that $i,j\in I$ implies that vertices $i$ and $j$
are not connected by an edge. We can think of the elements of $I$ as
the vertices occupied by particles. The set of all hard particle
configurations of cardinality $m$ on $G_r$ is called $\mathcal C_m$.
There is a natural ordering on the set $I_{2r+1}$, and in the
generalized hard particle model we define in this paper, the set $S$
is considered to be an ordered set.

In general, a hard particle model on $G_r$ associates weights to the
occupied vertices which depend on the vertex label, and possibly also
on the total number of occupied particles.  The corresponding
partition function is the sum over all possible hard-particle
configurations of the products of the occupied vertex weights.

For the purpose of this work, 
we define the partition function for $m$ hard particles as
\begin{equation}\label{hardparticleZ}
Z_{m}^{G_r}(j,k)=\sum_{S\in \mathcal C_m} \prod_{\ell=1}^m
y_{i_\ell}(j-2(r+\ell-m)-1+h(i_\ell),k)
\end{equation}
with the weights $y_i$ as in \eqref{timey} and $S=\{i_1,...,i_m\}$.

\subsubsection{Conserved quantities as hard particle partition functions}

We have the following.

\begin{thm}
The partition function $Z_m^{G_\al}(j,k)$ \eqref{hardparticleZ} for $m$-hard 
particles on $G_\al$
coincides with the quantity $C_{\al+1,m}(j,k)$ of \eqref{quantities}.
\end{thm}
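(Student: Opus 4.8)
The plan is to show that the partition functions $Z_m^{G_\al}(j,k)$ of \eqref{hardparticleZ} satisfy the \emph{same} three-term recursion and the same initial data as the conserved quantities $C_{\al+1,m}(j,k)$. Since Corollary \ref{correcu}, via the uniqueness statement in Theorem \ref{recutheo}, guarantees that this recursion together with its seed values determines the sequence uniquely, the two families must then coincide. Under the identification $C_{\al+1,m}\leftrightarrow Z_m^{G_\al}$, $C_{\al,m}\leftrightarrow Z_m^{G_{\al-1}}$, $C_{\al-1,m-1}\leftrightarrow Z_{m-1}^{G_{\al-2}}$, the recursion \eqref{recuhocor} becomes the target identity
\[
Z_m^{G_\al}(j,k)=Z_m^{G_{\al-1}}(j-2,k)+y_{2\al+1}(j-\al,k)\,Z_{m-1}^{G_{\al-1}}(j,k)+y_{2\al}(j-\al-1,k)\,Z_{m-1}^{G_{\al-2}}(j-2,k).
\]

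First I would dispose of the base cases. The graph $G_0$ is a single vertex with $h(1)=0$, so $Z_0^{G_0}=1$ and $Z_1^{G_0}(j,k)=y_1(j-1,k)$, matching $C_{1,m}(j,k)=\delta_{m,0}+\delta_{m,1}\,y_1(j-1,k)$; adopting the natural convention that $G_{-1}$ is the empty graph gives $Z_m^{G_{-1}}=\delta_{m,0}=C_{0,m}$, so the hard-particle side can be seeded exactly as in Corollary \ref{correcu}.

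The heart of the argument is a decomposition of the hard-particle configurations counted by $Z_m^{G_\al}$ according to the occupancy of the two top vertices $2\al$ and $2\al+1$ (the vertices at heights $\al$ and $\al+1$, which are the ones added in passing from $G_{\al-1}$ to $G_\al$). I would split $\mathcal C_m$ into three classes: (i) neither $2\al$ nor $2\al+1$ occupied, so all $m$ particles sit on $G_{\al-1}$; (ii) $2\al+1$ occupied, which (being adjacent to $2\al$ in $G_\al$) forces $2\al$ empty and leaves $m-1$ particles on $G_{\al-1}$; (iii) $2\al$ occupied and $2\al+1$ empty, which (since $2\al$ is adjacent to both $2\al-1$ and $2\al-2$) forces those two vertices empty and leaves $m-1$ particles on $G_{\al-2}$. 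Because $S$ is an \emph{ordered} set and $2\al,2\al+1$ carry the largest labels, in cases (ii) and (iii) the distinguished vertex is always the last factor $\ell=m$; evaluating its weight via \eqref{hardparticleZ} with $h(2\al+1)=\al+1$ and $h(2\al)=\al$ produces precisely the prefactors $y_{2\al+1}(j-\al,k)$ and $y_{2\al}(j-\al-1,k)$.

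The remaining and only delicate point is the bookkeeping of the time-dependent weight shifts: in \eqref{hardparticleZ} the argument of each $y_{i_\ell}$ is $j-2(r+\ell-m)-1+h(i_\ell)$, which depends jointly on the graph size $r$, the total number $m$, and the position $\ell$. When a surviving sub-configuration is re-read on the smaller graph, both $r$ and $m$ change, and I must verify that the induced shift of the first argument is consistent with the $j\to j-2$ (resp. $j\to j$) shifts above. A short check shows that in case (i) preserving the combination $j-2r$ forces $j\to j-2$, giving $Z_m^{G_{\al-1}}(j-2,k)$; in case (ii) the quantity $r+\ell-m$ is unchanged for the surviving particles, giving $Z_{m-1}^{G_{\al-1}}(j,k)$; and in case (iii) the net decrease of one in $r+\ell-m$ again forces $j\to j-2$, giving $Z_{m-1}^{G_{\al-2}}(j-2,k)$. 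Matching these three contributions against the three terms of \eqref{recuhocor} completes the induction. I expect this shift bookkeeping — rather than the combinatorial case split, which is forced by the adjacency structure of $G_\al$ visible in Figure \ref{fig:graphgr} — to be the main obstacle, and it is worth cross-checking against the explicit small cases tabulated in Example \ref{examC}.
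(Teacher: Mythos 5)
Your proposal is correct and follows essentially the same route as the paper: the paper likewise decomposes hard-particle configurations by the occupancy of the top two vertices $2\al$ and $2\al+1$ into the three cases $(0,0)$, $(1,0)$, $(0,1)$, derives the recursion \eqref{recuHO} matching \eqref{recuhocor}, and concludes by uniqueness from the shared initial conditions. Your extra attention to the time-shift bookkeeping in the weights is a point the paper leaves implicit, but it is the same argument.
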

\begin{proof}
Hard particle partition functions on $G_r$ satisfy a recursion
relation in $r$. 
Fix $m$ and consider the configuration of particles on vertices $(2 r
+1,2r)$. There are 3 possible pairs of occupation numbers for these two 
neighboring
vertices, $(0,0)$, $(1,0)$ and $(0,1)$, respectively contributing to the partition 
function:
\begin{itemize}
\item $(0,0)$ contributes $Z_m^{G_{r-1}}(j-2,k)$.
\item $(1,0)$ contributes $y_{2 r+1}(j-r,k)\, Z_{m-1}^{G_{r-1}}(j,k)$.
\item $(0,1)$ contributes $y_{2r}(j-r-1,k)\, Z_{m-1}^{G_{r-2}}(j-2,k)$.
\end{itemize}
This implies that $Z_m^{G_{r}}$ satisfies the recursion relation
\begin{equation}\label{recuHO}
Z_m^{G_{r+1}}(j,k)=Z_m^{G_r}(j-2,k)+
y_{2r+1}(j-r,k)\, Z_{m-1}^{G_r}(j,k) + y_{2r}(j-r-1,k)\,
Z_{m-1}^{G_{r-1}}(j-2,k).
\end{equation}
But this is the same relation satisfied by $C_{r,m}(j,k)$, Equation
\eqref{recuhocor}, with the same initial conditions,
$Z_0^{G_r}(j,k)=1$ (for any $r$) and and
$Z_1^{G_0}(j,k)=y_1(j-1,k)=C_{1,1}(j,k)$. The theorem follows.
\end{proof}

Setting $t_{r+1,j,k}=1$, we have:
\begin{cor}

The conserved quantities $c_m(j)$ of the $A_r$ $T$-system are the partition 
functions
for $m$-hard particles on $G_r$, with the weights:
\begin{eqnarray}\label{timeyT}
y_{2\al+1}(j,k)&=&{T_{\al+1,j+k-1,k+1}\, T_{\al,j+1+k,k}\over T_{\al+1,j+k,k}\, 
T_{\al,j+k,k+1}}\quad
(1\leq \al\leq r)\nonumber \\
y_{2\al}(j,k)&=&{T_{\al+1,j+k,k+1}\, T_{\al-1,j+k+1,k}\over T_{\al,j+k,k} \, T_{\al,j
+k+1,k+1}}
\quad (1\leq\al\leq r)\nonumber \\
y_1(j,k)&=&{T_{1,j+k,k+1}\over T_{1,j+k+1,k}}
\end{eqnarray}
where $T_{0,j,k}=T_{r+1,j,k}=1$ for all $j,k\in\Z$.
\end{cor}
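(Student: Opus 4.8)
The plan is to read the statement off as the $\al=r$ specialization of the theorem just proved, combined with the identification $c_m(j)=C_{r+1,m}(j,k)$ recorded after \eqref{quantities}. First I would take the preceding theorem at $\al=r$, giving the identity $Z_m^{G_r}(j,k)=C_{r+1,m}(j,k)$ as rational functions of the $A_{\infty/2}$ variables $t_{\al,j,k}$, where the hard-particle weights are those of \eqref{timey}. This identity is the only substantive input, and it is already in hand.

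Next I would impose the boundary condition $t_{r+1,j,k}=1$. By the discussion following \eqref{ainfTsys}, this collapses the $A_{\infty/2}$ system to the $A_r$ $T$-system, so that $t_{\al,j,k}=T_{\al,j,k}$ for all $1\le\al\le r+1$ and all $j,k\in\Z$. On the left-hand side, $C_{r+1,m}(j,k)$ then becomes the conserved quantity $c_m(j)$ and, as noted after \eqref{quantities}, loses its $k$-dependence. On the right-hand side, the sum defining $Z_m^{G_r}$ in \eqref{hardparticleZ} is unchanged in form; only its weights $y_1,\ldots,y_{2r+1}$ must be reexpressed under the substitution $t\mapsto T$.

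The one routine point to verify is that none of the weights entering $Z_m^{G_r}$ involves a variable $t_{\al,\cdot,\cdot}$ with $\al>r+1$, so that the substitution applies to every factor. Inspecting \eqref{timey}, the top weight $y_{2r+1}$ involves only $t_{r+1}$ and $t_r$, the next weight $y_{2r}$ only $t_{r+1},t_r,t_{r-1}$, and all remaining weights have strictly smaller first index; hence every $t$ occurring carries $\al\le r+1$. The substitution therefore turns \eqref{timey} into \eqref{timeyT} termwise, the factors $t_{r+1,\cdot,\cdot}=1$ in $y_{2r}$ and $y_{2r+1}$ simply disappearing, which is exactly the convention $T_{r+1,j,k}=1$ recorded in \eqref{timeyT}. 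Assembling the two specializations yields $c_m(j)=Z_m^{G_r}(j,k)$ with weights \eqref{timeyT}.

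I expect essentially no obstacle: the mathematical content is entirely carried by the preceding theorem and by the equality $c_m(j)=C_{r+1,m}(j,k)$, and the corollary is a matter of specializing $\al=r$ under the boundary condition. The only thing that could conceivably fail is an index escaping the range $\al\le r+1$ in one of the weights, and the check above shows this does not happen.
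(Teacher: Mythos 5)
Your proposal is correct and follows exactly the route the paper takes: the paper states the corollary immediately after the theorem with only the phrase ``Setting $t_{r+1,j,k}=1$, we have,'' i.e.\ it is obtained by specializing the theorem at $\al=r$, imposing the boundary condition so that $C_{r+1,m}(j,k)=c_m(j)$, and substituting $t\mapsto T$ in the weights \eqref{timey} to obtain \eqref{timeyT}. Your additional check that no weight involves an index $\al>r+1$ is a sensible (if routine) verification that the paper leaves implicit.
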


As the resulting hard-particle partition functions are independent of
$k$, we may set $k=0$ in the expression for the weights.

\subsection{A pictorial representation for the hard particle partition
  function} 

\begin{figure}
\centering \includegraphics[width=12.cm]{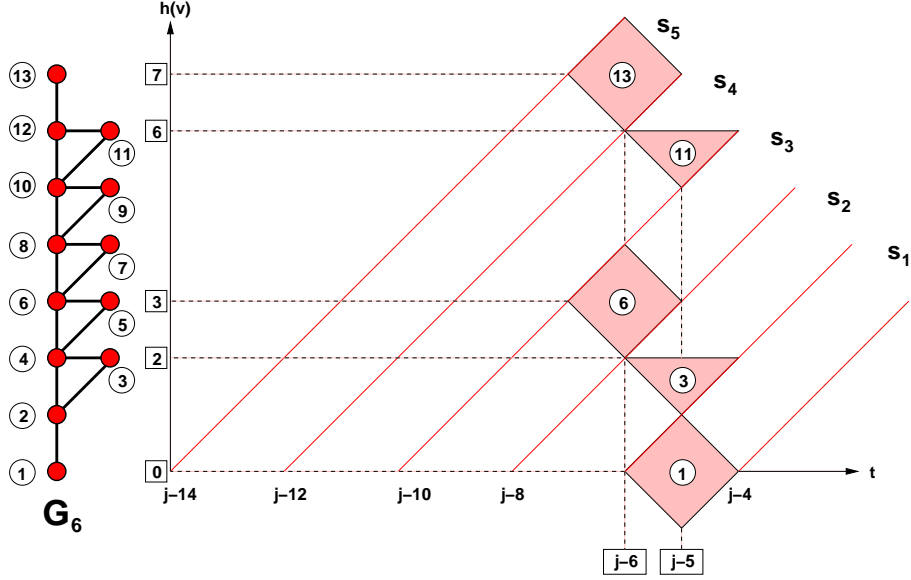}
\caption{\small A graphical interpretation of a hard-particle
configuration on the graph $G_6$, with $m=5$ particles at positions
$\{ 1,3,6,11,13\}$.  The label of
the occupied vertex is indicated in a circle, and the time and height
coordinates in rectangles. A distinct diagonal stripe corresponds to
each particle. The leftmost stripe has $x$-intercept $j-2(r+1)=j-14$,
and the rightmost is at $j-2(r+1-m)=j-4$. The weight of this
configuration is
$y_1(j-5,k)y_3(j-5,k)y_6(j-6,k)y_{11}(j-5,k)y_{13}(j-6,k)$.}
\label{fig:hardotime}
\end{figure}

The hard particle configurations which give rise to the partition
 function of the form \eqref{hardparticleZ} can be represented
 graphically as in Figure \ref{fig:hardotime}.
\begin{itemize}
\item A particle at a spine vertex $v$
 ($v\in\{1,2,4,6,...,2r-2,2r,2r+1\}$) is represented by a diamond on the
 two-dimensional lattice, its center at the height of the vertex, at
 the point $(t,h(v))$ for some $t\in\Z$, and its vertices at the four
 neighboring lattice sites.
\item A particle a vertex $v\in\{3,5,7,...,2r-1$\} is represented by the
lower half of such a diamond.
\end{itemize}

We call $t$ the time coordinate, and $h(v)$ the height.  Each polygon
is at $(t,h(v))$ contained in a diagonal stripe $s$, bordered by
the lines $y=x-(t+1-h(v))$ and $y=x-(t-1-h(v))$. We denote $s$
by its $x$-intercepts, $s=\{t-1-h(v),t+1-h(v)\}$.

Given a configuration $S\in\cC_m$, with $S=\{i_1<i_2< \cdots <i_m\}$,
 the polygon representing the particle $i_1$ is drawn in the stripe
 $s_1=\{t-2,t\}$; that of $i_2$ in the stripe immediately above and to
 the left, $s_2=\{t-4,t-2\}$, and the $k$-th polygon representing
 $i_k$ lies in stripe $s_k=\{t-2k,t-2k+2\}$. The height of each
 polygon is determined by $h(i_j)$ and its time coordinate by its
 stripe:  $t_k=h(i_k)-1+t-2(k-1)$.

If we choose $t=j-2(r+1-m)$, then Equation \eqref{hardparticleZ}
 can be written as
\begin{equation}\label{goodZ}
Z_{m}^{G_r}(j,k)=\sum_{S\in\mathcal C_m}
\prod_{\ell=1}^m y_{i_\ell}(t_\ell,k)
\end{equation}

\section{Path formulation and positivity}\label{firpositsec}

We now give an expression for $T_{\al,j,k}$ as a function of the
initial data $\bx_0=(T_{\beta,j,0},T_{\beta,j,1})_{\beta\in I_r,j\in
\Z}$. It can be interpreted as the partition function of weighted
paths on a certain graph, with time-dependent weights. That is, we
generalize the notion of a weighted path, so that the weight of a step
in the path depends on the time at which it is taken.

As a corollary of the formulation in this section, we have the
positivity Theorem \ref{positheor} for the variables $T_{\al,j,k}$ as
a function of the initial data.

\subsection{Definitions}\label{pathdefs}
\label{transmatsec}

\begin{figure}
\centering
\includegraphics[width=12.cm]{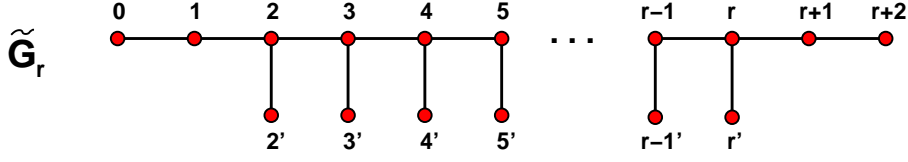}
\caption{The graph ${\wG}_r$,
with $2r+2$ vertices.}
\label{fig:dualgr}
\end{figure}

Let ${\wG}_r$ be the graph in Figure \ref{fig:dualgr}. It has $2r+2$
vertices, which are ordered as $(0,1,2,2',3,3',...r,r',r+1,r+2)$. 
Its incidence matrix $A$ is
\begin{eqnarray*}
&& A_{m,m'}=A_{m',m}=1, \ (2\leq m\leq r);\qquad
 A_{m,m+1}=A_{m+1,m}=1, \ (0\leq m\leq r+1).
\end{eqnarray*}
The vertex labelled $0$ is called the origin of the graph. We call the
vertices $i$ the {\em spine} vertices of $\wG_r$, and the edges which
connect $i\to\i\pm1$ spine edges.

We consider the set ${\mathcal P}_{t_1,t_2}^{a,b}$ of paths $p$ on the
graph ${\tilde G}_r$, starting at time $t_1$ and vertex $a$, and
ending at time $t_2\geq t_1$ at vertex $b$. We take $t_i\in \Z$, and
each step takes one time unit.  The path $p$ may be represented by the
succession of visited vertices, $p=(p(t))_{t=t_1,t_1+1,...,t_2}$, with
$p(t_1)=a$ and $p(t_2)=b$ and
$A_{p(s),p(s+1)}=1$ for any $s$.

Let $w_{i,j}(t)$ be the weight of a step vertex $i$ to vertex $j$ at
time $t$. We define the weight of a path $p\in {\mathcal
  P}_{t_1,t_2}^{a,b}$ to be
\begin{equation}
w(p)=\prod_{s=t_1}^{t_2-1} w_{p(s),p(s+1)}(s),\quad p(t_1)=a,\ p(t_2)=b.
\end{equation}
The partition function for weighted paths in ${\mathcal
P}_{t_1,t_2}^{a,b}$ is
\begin{equation}
\cZ_{t_1,t_2}^{a,b}=\sum_{p\in {\mathcal P}_{t_1,t_2}^{a,b}} w(p).
\end{equation}
For later use, we define $\cZ_{t_1,t_2}^{a,b}=0$ if $t_1>t_2$.

\begin{figure}
\centering
\includegraphics[width=10.cm]{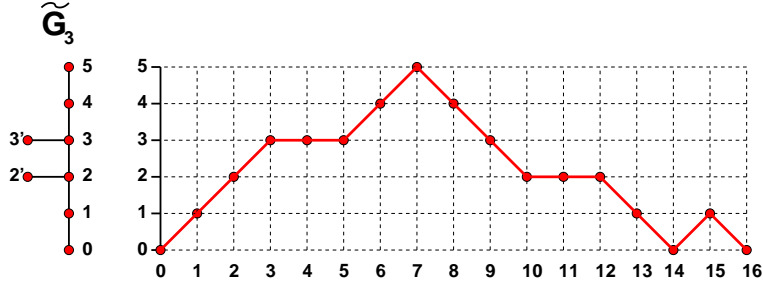}
\caption{\small The planar representation of a typical path in
  $\mathcal P_{0,16}^{0,0}$ on the
  graph ${\tilde G}_3$. }
\label{fig:grpath}
\end{figure}

Paths can be represented on the
lattice $\Z^2$ as in Figure \ref{fig:grpath}.  We associate a vertical
coordinate $h(i)=h(i')=i$ to each vertex of ${\wG}_r$.  The horizontal
axis is the time.
A step $a\to b$ at time $t$ on
${\wG}_r$ is a step $(t,h(a))\to (t+1,h(b))$. 

We claim (see Theorem \ref{pathrepsthm}) that there exists a choice of
weights $w_{a,b}(s)$, as functions of
$\bx_0=(T_{\al,j,0},T_{\al,j,1})_{\al\in I_r,j\in \Z}$, such that
$T_{1,j,k}/T_{1,j+k,0}$ is equal to the partition function
$\cZ_{j-k,j+k}^{0,0}$.

Dividing a path, which takes place from time $t$ to time $t'$,
into a first part from $t$ to $t'$, and a
second part, from $t'$ to $t''$, we have
\begin{equation}\label{recuZ}
\cZ_{t,t'}^{a,b}=\sum_{x\in \wG_r} \cZ_{t,t'}^{a,x}\,
\cZ_{t',t''}^{x,b},\quad t'\in[t,t''].
\end{equation}
In particular, the matrix of one-step partition functions
$\cZ_{t,t+1}^{a,b}$ is called the transfer matrix $\cT(t)$, with entries
\begin{equation}\label{transmat}
(\cT(t))_{a,b}=\cZ_{t,t+1}^{a,b}=w_{a,b}(t) \, A_{a,b}.
\end{equation}
The transfer matrix is a decorated adjacency matrix.  The recursion
relation \eqref{recuZ} implies
\begin{equation}\label{recunZ}
\cZ_{t_1,t_2}^{a,b}
=\left( \cT(t_1)\cT(t_1+1)\cdots \cT(t_2-1)\right)_{a,b}.
\end{equation}

We use the following definition for weights $w_{a,b}(s)$ of paths on $\wG_r$:
\begin{eqnarray}\label{choicew}
&& w_{m,m'}(s)=1, \quad w_{m',m}(s)=y_{2m+1}(s,0), \ (m\in
\{1,...,r\}), \nonumber \\ && w_{m,m+1}(s)=1, \quad
w_{m+1,m}(s)=y_{2m}(s,0), \ (m\in\{1,...,r-1\}), \\ && w_{0,1}(s)=1,
\quad w_{1,0}(s)=y_1(s,0), \nonumber
\end{eqnarray}
in terms of the weights $y_i(s,k)$ of Equation \eqref{timeyT}.

\subsection{An involution on pairs of weights}
 We define an involution
$\varphi$ on the set $\cC_m\times {\cP}_{j-2(r+1-m),j+2k}^{0,0}$,
consisting of hard-particle configurations on $G_r$ and paths on
$\wG_r$.

Let $(S,p)\in \cC_m\times {\cP}_{j-2(r+1-m),j+2k}^{0,0}$, with
$m\in\{0,...,r+1\}$, $k\in\Z_+$.  We refer to the graphical
representations of Figures \ref{fig:hardotime} and \ref{fig:grpath},
and we draw $S$ and $p$ on the same lattice (see Figure
\ref{fig:invol}), where $S$ is represented between the diagonal lines
$y=x-(j-2(r+1))$ and $y=x-(j-2(r+1-m))$, and $p$ starts at
$(j-2(r+1-m),0)$, the $x$-intercept of the bottom stripe of $S$.

\begin{figure}
\centering
\includegraphics[width=15.cm]{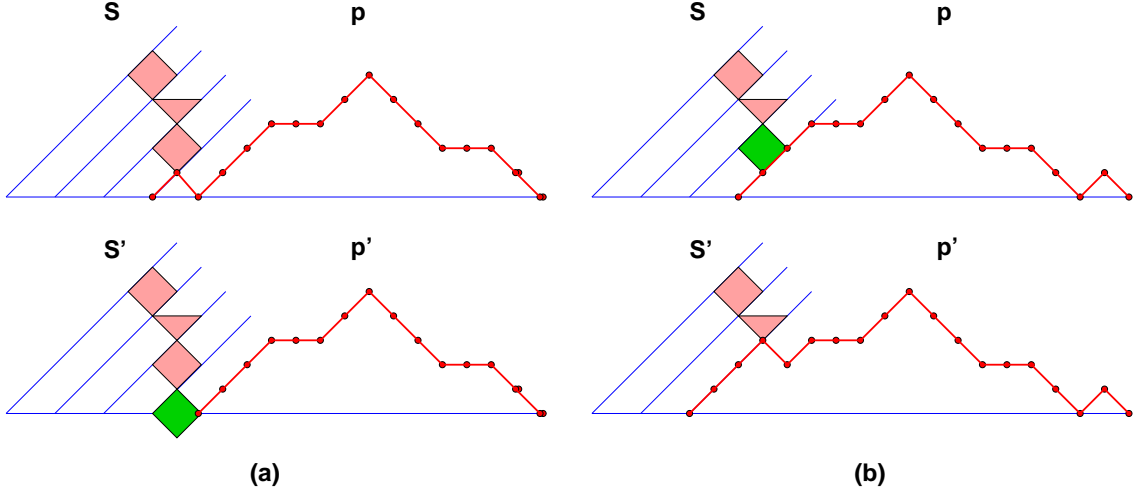}
\caption{\small The involution $\varphi$ between pairs of $m$- hard
particle configurations and paths in
${\mathcal P}_{j-2(r+1-m),j+2k}^{0,0}$. Case (a): the first stripe
traversed by
$p$ is absorbed into $S'$, which has $m+1$ particles, and 
$p'\in{\mathcal P}_{j-2(r+1-m)+2,j+2k}^{0,0}$. Case
(b): The bottom stripe of $S$ is absorbed into $p'$, now in ${\mathcal
P}_{j-2(r+1-m)-2,j+2k}^{0,0}$, while $S'$ has only $m-1$ 
particles.}
\label{fig:invol}
\end{figure}

The path $p$ has an initial section $p_0$ within the diagonal stripe
$\{j-2(r+1-m),j-2(r-m)\}$, consisting of $u$ consecutive up steps and
(i) a down step $(s,u)\to (s+1,u-1)$ or (ii) two horizontal steps
$(s,u)\to(s+1,u)\to (s+2,u)$, where $s=j-2(r+1-m)+u$. $p\setminus p_0$
is then to the right of this initial stripe.

Let $\sigma$ be a map from path steps of type (i) or (ii) on $\wG_r$ to the
vertex set of $G_r$. It is defined as follows:
\begin{eqnarray*}
\sigma((s,u)\to(s+1,u-1))&=&\left\{\begin{array}{ll}
2u-2, & 2\leq u\leq r+1;\\
1, & u=1;\\
2r+1, & u=r+2.\end{array}\right.\\
\sigma((s,u)\to(s+2,u)) &= & 2u-1.
\end{eqnarray*}

\begin{remark}
Graphically, steps of type (i) and (ii) in $p$ can mapped precisely to
the polygons representing particles on $G_r$. A step of type (i) is
the NE edge of a diamond (hence a particle on a spine vertex) and a
step of type (ii) the upper edge of a half-diamond. The map $\sigma$
represents this correspondence.
\end{remark}

Denote by $i$ the image of a step under the map $\sigma$. We must now
distinguish between two cases.
\begin{itemize}
\item {\bf Case (a):} If $i<i_1$ and $S':=\{i,i_1,i_2,...,i_m\}\subset
  \cC_{m+1}$, define $p'\in {\mathcal P}_{j-2(r+1-m)+2,j+2k}^{0,0}$ to
  be the path with $p'(j-2(r+1-m)+2+x)=x$ for $x=0,1,...,u$ (case (i))
  or $x=0,1,...,u-1$ (case (ii)), and $p'(x)=p(x+2)$ otherwise.
\item {\bf Case (b):} If $i\geq i_1$ or $\{i,i_1,...,i_m\} \notin\cC_{m+1}$,
define $S'=\{i_2,i_3,...,i_m\}\in\cC_{m-1}$, the hard particle
configuration with the right stripe removed. It is now drawn between
the diagonal lines
$y=x-(j-2(r+1))$ and $y=x-(j-2(r+1-(m-1)))$. As for the path $p'$,
\begin{itemize}
\item If $i_1\in \{3,5,...,2r-1\}$, define $p'(j-2(r+2-m)+x)=x$ for
  $x=0,1,...,h(i_1)$, 
$p'(j-2(r+2-m)+h(i_1)+y)=h(i_1)$ 
for $y=1,2$.
\item Otherwise, $p'(j-2(r+2-m)+x)=x$
for $x=0,1,...,h(i_1)+1$, and $p'(j-2(r+2-m)+h(i_1)+2)=h(i_1)$.
\end{itemize}
In both cases, $p'(x)=p(x-2)$ for the remaining times.
\end{itemize}

\begin{remark} Graphically the map can be visualized as follows. If
  the particle represented by $p_0$ can be added to $S$ while keeping
  the hard-particle condition, then we do this, while changing $p_0$
  so that it consists only of up steps, starting two steps to the
  right of the original starting point of $p$. Otherwise, perform the
  opposite operation, changing the first particle to a path segment.
\end{remark}

In view of the graphical description, the map $\varphi$ is clearly an
involution. Moreover it is weight-preserving: In Equation
\eqref{choicew}, only the steps of type (i) or (ii) have a non-trivial
weight. Moreover, $w(\sigma({\rm step}))=w({\rm step})$ according to
Equation \eqref{timeyT} (setting $k=0$).  Therefore,
$w(S,p)=w(S)w(p)=w(S')w(p')=w(S',p').$ We have
\begin{lemma}
\begin{equation}
\sum_{m=0}^{r+1} (-1)^{r+1-m} Z_{m}^{G_r}(j,0)\,
\cZ_{j-2(r+1-m),j+2k}^{0,0}=0 \label{ornotobe}, \quad (j\in \Z, k\geq 0).
\end{equation}
\end{lemma}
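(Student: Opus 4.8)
The plan is to prove the identity \eqref{ornotobe} by exhibiting it as the statement that the signed sum of weights over the set $\bigsqcup_{m=0}^{r+1} \cC_m \times \cP_{j-2(r+1-m),j+2k}^{0,0}$ vanishes, using the sign-reversing, weight-preserving involution $\varphi$ constructed just above the Lemma. The left-hand side of \eqref{ornotobe} is, by the definition of the partition functions $Z_m^{G_r}(j,0)=\sum_{S\in\cC_m} w(S)$ and $\cZ_{j-2(r+1-m),j+2k}^{0,0}=\sum_{p} w(p)$, exactly
$$
\sum_{m=0}^{r+1}(-1)^{r+1-m}\sum_{(S,p)\in\cC_m\times\cP_{j-2(r+1-m),j+2k}^{0,0}} w(S)\,w(p).
$$
So the whole content is to check that $\varphi$ pairs up all terms with opposite signs and equal weight, leaving nothing uncancelled.

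First I would observe that $\varphi$ changes the particle number by exactly $\pm 1$: in Case (a) it sends $(S,p)\in\cC_m\times\cP_{j-2(r+1-m),j+2k}^{0,0}$ to $(S',p')\in\cC_{m+1}\times\cP_{j-2(r-m),j+2k}^{0,0}$, and in Case (b) it sends it to $\cC_{m-1}\times\cP_{j-2(r+2-m),j+2k}^{0,0}$. In both cases the shift in the starting time of the path exactly matches the shift in $m$ that keeps the label $j-2(r+1-m')$ consistent with the new particle number $m'=m\pm1$; this is why the indices line up so that $\varphi$ is a map of the disjoint union into itself. Since $m\mapsto m\pm1$ flips the parity of $r+1-m$, the sign $(-1)^{r+1-m}$ is reversed under $\varphi$, so matched pairs cancel. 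Weight-preservation $w(S,p)=w(S',p')$ is already established in the paragraph preceding the Lemma, via $w(\sigma(\mathrm{step}))=w(\mathrm{step})$ from \eqref{timeyT} with $k=0$; I would simply invoke this.

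The crux is to verify that $\varphi$ is a genuine \emph{fixed-point-free involution} on the entire set, i.e.\ that $\varphi^2=\mathrm{id}$ and that every pair $(S,p)$ has a well-defined image with no exceptional uncancelled terms. The potential obstacle is boundary behavior: one must confirm that the initial section $p_0$ always exists (the path starts at the origin $0$ at the bottom stripe and must either turn down or take two horizontal steps), that $\sigma$ lands in the correct vertex set of $G_r$ so that $S'\in\cC_{m\pm1}$ is legitimate, and crucially that Cases (a) and (b) are exactly complementary and mutually inverse — applying $\varphi$ in Case (a) yields a configuration on which $\varphi$ acts by Case (b) to return the original, and vice versa. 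I would argue this geometrically using Figure \ref{fig:invol}: adding the polygon $\sigma(p_0)$ to $S$ is possible precisely when the hard-particle condition and the ordering $i<i_1$ hold (Case (a)), and otherwise the bottom stripe of $S$ must be converted back into path steps (Case (b)); these two operations undo each other by construction. Since the union ranges over $m=0,\dots,r+1$, one checks the extreme cases $m=0$ (empty $S$, only Case (a) applies) and $m=r+1$ (the graph $G_r$ is saturated, forcing Case (b)) so that $\varphi$ maps nothing outside the range — this confirms there are no leftover terms at the ends of the alternating sum, completing the proof.
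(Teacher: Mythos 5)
Your proposal is correct and follows essentially the same route as the paper: the paper's proof is a one-sentence appeal to the sign-reversing, weight-preserving involution $\varphi$ constructed just before the Lemma, exactly as you invoke it. The additional checks you carry out (that $\varphi$ shifts $m$ by $\pm1$ so the sign flips, that the starting times of the paths realign, and that the extreme cases $m=0$ and $m=r+1$ leave no uncancelled terms for $k\geq 0$) are precisely the content the paper delegates to the preceding construction of $\varphi$.
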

\begin{proof}
This is the partition function for pairs $(S,p)\in
\cup_{m=0}^{r+1}\cC_m\times {\cP}_{j-2(r+1-m),j+2k}^{0,0}$, with an extra 
factor
$(-1)^{r+1-m}$ which ensures  that the
contributions of $(S,p)$ and $\varphi(S,p)$ cancel each other.
\end{proof}

We can also consider the sum in Equation \eqref{ornotobe} in the case
where $k<0$. The sum is non-trivial in those cases only if $k\geq
-r-1$, since $\cZ_{t,t'}^{a,b}=0$ if $t>t'$. We extend the definition
of $\varphi$: $\varphi(S,p)=(S,p)$ if $S=\emptyset$ or if
the path $p$ has length zero and $i_1>1$.

\begin{lemma}
\begin{equation}\label{initzp}
\sum_{m=0}^{r+1-i} (-1)^{r+1-m} Z_{m}^{G_r}(j,0)\,
\cZ_{j-2(r+1-m),j-2i}^{0,0}=(-1)^i Z_{r+1-i}^{G_r'}(j,0)
\end{equation}
where $Z_m^{G_r'}(j,k)$ is the partition function of $m$ hard
particles on $G_r'$, the graph $G_r$ with vertex $1$ removed (or the
contribution to $Z_m^{G_r}$ in which vertex $1$ is unoccupied).
\end{lemma}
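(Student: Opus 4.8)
The plan is to proceed exactly as in the previous lemma, by interpreting the left-hand side of \eqref{initzp} as a signed partition function over pairs $(S,p)\in\cup_{m=0}^{r+1-i}\cC_m\times{\cP}_{j-2(r+1-m),j-2i}^{0,0}$ and showing that the extended involution $\varphi$ cancels almost all contributions in pairs, leaving only the fixed points. The crucial difference from \eqref{ornotobe} is that here the path length $2k$ is negative, namely $2k=-2i$, so the paths are very short and most configurations $(S,p)$ force the path to be trivial. First I would identify precisely which pairs $(S,p)$ are fixed points of $\varphi$ under the extended definition: these are exactly the pairs where $S=\emptyset$, or where $p$ has length zero with $i_1>1$. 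A pair with $S=\emptyset$ contributes only when the endpoints of the zero-length path coincide with the initial point forced by $m=0$; this requires $j-2(r+1)=j-2i$, i.e.\ $i=r+1$, which is a boundary case. The generic fixed points are those with a length-zero path and $i_1>1$, which I expect to be the bulk of the surviving terms.

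The key combinatorial step is to show that the fixed points with $i_1>1$ are in weight-preserving bijection with the hard-particle configurations counted by $Z_{r+1-i}^{G_r'}(j,0)$. Here I would use the geometry: the path $p$ has length $2k=-2i$, meaning it starts at time $j-2(r+1-m)$ and must end at time $j-2i$, so its length is $2(m-i)\geq 0$, forcing $m\geq i$ and in fact $m=r+1$ gives the longest allowable path. A length-zero path occurs precisely when $m=i$, in which case the path is the single vertex $0$ at time $j-2(r+1-i)=j-2i$; but the extended involution fixes such a pair only when $i_1>1$, i.e.\ when vertex $1$ is unoccupied in $S$. This is exactly the condition defining $G_r'$: the surviving hard-particle configurations $S$ of cardinality $m=i$ on $G_r$ with vertex $1$ unoccupied. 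Wait—I must recount the cardinality. Since the sum runs to $m=r+1-i$ and the right-hand side has $Z_{r+1-i}^{G_r'}$, the relevant fixed configurations have $m=r+1-i$ particles, and I would verify that for this top value of $m$ the path is forced to have length zero (since $t_1=j-2(r+1-m)=j-2i=t_2$) and that the involution leaves these untouched precisely when vertex $1$ is unoccupied.

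Concretely the argument would run as follows. For $m<r+1-i$ the path $p$ has positive length, so $\varphi$ acts freely (its action absorbing or releasing a stripe changes $m$ by $\pm1$ while staying within the range $[0,r+1-i]$), and these contributions cancel in signed pairs because $\varphi$ flips the sign $(-1)^{r+1-m}$. For $m=r+1-i$ the path has length zero and sits at vertex $0$; the extended involution fixes the pair exactly when $i_1>1$, and maps pairs with $i_1=1$ to pairs with smaller $m$ (releasing the bottom particle at vertex $1$ into the path), again cancelling. Thus the only survivors are pairs with $m=r+1-i$, zero-length path, and $i_1>1$, whose weight is just $Z(S)$ summed over $S\in\cC_{r+1-i}$ avoiding vertex $1$, which is $Z_{r+1-i}^{G_r'}(j,0)$, carrying the sign $(-1)^{r+1-m}=(-1)^i$.

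The main obstacle I anticipate is the careful bookkeeping at the boundary of the involution's domain: I must check that when $\varphi$ would release a particle from a length-zero path, the resulting shorter path is still a legitimate element of the appropriate ${\cP}_{t_1,t_2}^{0,0}$ (in particular that $t_1\le t_2$ is preserved, using the convention $\cZ_{t_1,t_2}^{a,b}=0$ for $t_1>t_2$), and that no double-counting or spurious fixed point arises from the edge case $i_1\in\{3,5,\ldots,2r-1\}$ (half-diamond vertices) versus spine vertices. Establishing that the extended $\varphi$ is genuinely an involution on the enlarged domain—and that its unique non-cancelling orbits are exactly the claimed configurations on $G_r'$—is the delicate part; the weight-preservation is inherited verbatim from the previous lemma since the local weight assignments in \eqref{choicew} are unchanged.
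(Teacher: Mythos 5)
Your proposal follows the paper's own proof: the paper likewise applies the extended involution $\varphi$ to the range $-r-1\le k<0$, lets all non-invariant pairs cancel in signed pairs, and identifies the survivors as exactly the pairs with an empty (length-zero) path and vertex $1$ unoccupied, which sum to $(-1)^i Z_{r+1-i}^{G_r'}(j,0)$. Your initial miscount of the path length as $2(m-i)$ (it is $2(r+1-m-i)$, so the length-zero case is $m=r+1-i$, not $m=i$) is corrected within your own text, and the final concrete version of the argument is sound.
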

\begin{proof}
We apply the involution argument in the previous Lemma for $k$ in the
range $-r-1\leq
k < 0$.  Pairs $(S,p)$
which are not invariant under $\varphi$ cancel each other. We are left
with the contribution of the invariant pairs. The latter always have
$p=\emptyset$ and the vertex $1$ unoccupied.  
\end{proof}

Equations \eqref{initzp} are an expression for the initial conditions
of the partition functions $\cZ_{j-2(r+1-m),j-2i}^{0,0}$ with $1\leq
i\leq r-1$ in terms of hard-particle partition functions.

\subsection{The $T$-system solution $T_{1,j,k}$ as a partition
  function of paths}
Our main result in this section is the following.
\begin{thm}\label{pathrepsthm} 
\begin{equation}\label{truc}
T_{1,j,k}=T_{1,j+k,0}\, \cZ_{j-k,j+k}^{0,0} \ .
\end{equation}
\end{thm}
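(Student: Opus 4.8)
The target identity \eqref{truc} asserts that the ratio $T_{1,j,k}/T_{1,j+k,0}$ equals the path partition function $\cZ_{j-k,j+k}^{0,0}$. The natural strategy is to show that both sides obey the same linear recursion in $k$ and agree on enough initial data to pin them down uniquely. Theorem \ref{recut} already supplies a linear recursion for the $T_{1,j,k}$: the relation \eqref{linrecone}, $\sum_{b=0}^{r+1} T_{1,j-b,k+b}(-1)^b c_{r+1-b}(j-k)=0$, whose coefficients are the conserved quantities $c_m(j-k)=C_{r+1,m}(j,k)$ identified as hard-particle partition functions $Z_m^{G_r}(j,0)$ in the previous section. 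So the first step is to recast \eqref{linrecone}, after dividing through by the appropriate normalization $T_{1,j+k,0}$, as a recursion for the left-hand quantities $T_{1,j,k}/T_{1,j+k,0}$ with coefficients that are exactly the $Z_m^{G_r}(j,0)$ appearing in the involution Lemma \eqref{ornotobe}.

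\textbf{Matching the recursions.} The key observation to exploit is Lemma \eqref{ornotobe}: for every $k\ge 0$,
\begin{equation}
\sum_{m=0}^{r+1} (-1)^{r+1-m} Z_{m}^{G_r}(j,0)\, \cZ_{j-2(r+1-m),j+2k}^{0,0}=0.
\end{equation}
This is precisely a linear recursion relating the path partition functions $\cZ_{j-2(r+1-m),j+2k}^{0,0}$ as $m$ ranges over $0,\dots,r+1$, with the same hard-particle coefficients $Z_m^{G_r}(j,0)$ that govern the $T$-recursion. The plan is therefore to reindex: writing $\cZ_{j-k,j+k}^{0,0}$ in the form that appears in \eqref{ornotobe} (the endpoints shift by two as $m$ changes, matching the shifts $T_{1,j-b,k+b}$ in \eqref{linrecone} once one tracks the change of variables between $b$ and $m$ and between the pairs $(j,k)$). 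One then verifies that the candidate quantity $T_{1,j+k,0}\,\cZ_{j-k,j+k}^{0,0}$ and the true solution $T_{1,j,k}$ satisfy an identical homogeneous linear recursion in $k$ of order $r+1$ with identical coefficients $c_m(j-k)$.

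\textbf{Initial data and uniqueness.} Since \eqref{linrecone} is an order-$(r+1)$ linear recursion in $k$ (equivalently in the sliding index along the diagonal), a solution is determined by $r+1$ consecutive values. The second step is thus to check the base cases: one must verify \eqref{truc} for the initial data range, i.e.\ that $T_{1,j,k}=T_{1,j+k,0}\,\cZ_{j-k,j+k}^{0,0}$ holds for $k$ in a window of length $r+1$ (for instance small nonnegative $k$, or the degenerate short paths). This is exactly where Lemma \eqref{initzp} enters: it evaluates the analogous alternating sum for $k<0$ and identifies the boundary contributions $(-1)^iZ_{r+1-i}^{G_r'}(j,0)$, giving closed expressions for the short path partition functions $\cZ_{j-2(r+1-m),j-2i}^{0,0}$ that serve as initial conditions. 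Matching these against the known values of $T_{1,j,k}$ built from the fundamental data $\bx_0$ completes the determination.

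\textbf{The main obstacle.} I expect the real work to be bookkeeping rather than conceptual: correctly aligning the three index conventions --- the $b$-shifts of \eqref{linrecone}, the $m$-shifts and the stripe endpoints $j-2(r+1-m)$ of \eqref{ornotobe}/\eqref{hardparticleZ}, and the $(j-k,j+k)$ endpoints of the claimed $\cZ$ --- so that the two recursions literally coincide term by term. In particular one must confirm that dividing the $T$-recursion by $T_{1,j+k,0}$ produces coefficients that are $k$-independent and equal to $Z_m^{G_r}(j,0)$, using that the conserved quantities $c_m(j-k)$ depend only on $j-k$ (Theorem \ref{recut}) and the corollary that $Z_m^{G_r}$ is $k$-independent after setting $k=0$. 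The second delicate point is ensuring the initial-condition count is exactly right: one needs precisely $r+1$ verified base values, supplied by \eqref{initzp}, and no off-by-one error in the diagonal window. Once those alignments are pinned down, the uniqueness of solutions to the linear recursion closes the argument.
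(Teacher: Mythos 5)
Your proposal follows essentially the same route as the paper's proof: show that $S_{j,k}=T_{1,j+k,0}\,\cZ_{j-k,j+k}^{0,0}$ satisfies the linear recursion \eqref{linrecone} by identifying its coefficients with the hard-particle partition functions and invoking the involution identity \eqref{ornotobe}, then pin down the $r+1$ initial values via \eqref{initzp}. The only place you underestimate the work is the base-case matching, which is more than index bookkeeping: the paper must separately verify that the $T_{1,j,k}$ satisfy the same triangular relations as $S_{j,k}$, and does so by introducing the quantities $W_i^{G_\al}(j)$ and proving $W_i^{G_\al}(j)=Z_i^{G_\al}(j,0)\vert_{y_1=0}$ by an induction on $\al$ that uses the recursion \eqref{recuHO} together with the $T$-system relations themselves.
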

\begin{proof} We will show that $S_{j,k}=T_{1,j+k,0}\, \cZ_{j-k,j+k}^{0,0}$
satisfies the linear recursion relation \eqref{linrecone} and
coincides with $T_{1,j,k}$ when $k\in\{0,\ldots, r\}$ for any $j\in
\Z$. Given that \eqref{linrecone} has $r+1$ terms, this implies
$S_{j,k}=T_{1,j,k}$ for all other $k$. 

The sum
$$
\sum_{m=0}^{r+1} (-1)^m c_{r+1-m}(j-k)\,
\cZ_{j-k-2m,j+k}^{0,0} 
$$ is equal to the sum in Equation \eqref{ornotobe}, 
since
$Z_{m}^{G_r}(j,0)=c_{m}(j)$. Therefore, it vanishes for all 
$j\in \Z$ and $k\in \Z_+$. This implies that $S_{j,k}$ satisfies the
same recursion relation \eqref{linrecone} as $T_{1,j,k}$.

As for the initial conditions, we see from Equation \eqref{initzp} that
$S_{j,k}$ satisfies
\begin{equation}\label{initzS}
\sum_{m=0}^{i}  (-1)^{m} Z_{i-m}^{G_r}(j,0)\, S_{j-m-2(r+1-i),m}
=Z_{i}^{G_r'}(j,0)\, T_{1,j-2(r+1-i),0}.
\end{equation}

We will show that the variables $T_{1,j,k}$ satisfy the same relations.
Let
\begin{equation}
W_i^{G_\al}(j)=\sum_{m=0}^{i}  (-1)^{m} Z_{i-m}^{G_\al}(j,0)\, 
{T_{1,j-2(\al+1-i)-m,m}\over T_{1,j-2(\al+1-i),0}}, \ \ (0\leq i\leq \al+1)
\end{equation}
Using the recursion relations \eqref{recuHO}, we find
\begin{eqnarray*}
W_i^{G_{\al+1}}(j)=
W_{i}^{G_{\al}}(j-2)+y_{2\al+1}(j-\al)W_{i-1}^{G_{\al}}(j)+
y_{2\al}(j-\al-1)W_{i-1}^{G_{\al-1}}(j-2).
\end{eqnarray*}
This is identical to the recursion relations \eqref{recuHO} for
$Z_i^{G_\al}(j,0)$. 
Comparing the initial terms, we find that $W_0^{G_0}(j)=1$ and $W_1^{G_0}
(j)=
Z_1^{G_0}(j,0)-{T_{1,j-1,1}\over T_{1,j,0}}Z_{0}^{G_0}(j,0)=0$, so that 
$W_i^{G_0}(j)=Z_i^{G_0}(j,0)\vert_{y_1(j-1)=0}$. Moreover,
\begin{eqnarray*}
W_0^{G_1}(j)&=&1,\\
W_1^{G_1}(j)&=&Z_1^{G_1}(j,0)-{T_{1,j-3,1}\over T_{1,j-2,0}}Z_{0}^{G_0}(j,
0)=Z_1^{G_1}(j,0)\vert_{y_1(j-3)=0},\\
W_2^{G_1}(j)&=&Z_2^{G_1}(j,0)-{T_{1,j-1,1}\over T_{1,j,0}}Z_{1}^{G_1}(j,0)+
{T_{1,j-2,2}\over T_{1,j,0}}Z_{0}^{G_1}(j,0)=0,
\end{eqnarray*}
where we have used the identity between $Z$ and $C$, Example
\ref{examC}, the definitions \eqref{timeyT}, and
$T$-system relations. In short, we have
$W_i^{G_\al}(j)=Z_i^{G_\al}(j,0)\vert_{y_1=0}$, valid for all 
initial data $\al=0,1$. This implies
\begin{equation}
W_i^{G_\al}(j)=Z_i^{G_\al}(j,0)\vert_{y_1=0} \quad {\rm for}\ \ {\rm
  all}\ \ \al .
\end{equation}
Thus, $W_i^{G_\al}(j)$ is the partition function for $i$ hard
particles on $G_\al$ with weight $0$ on vertex $1$, or
alternatively, vertex $1$ unoccupied. Therefore,
$W_i^{G_r}(j)=Z_i^{G_r'}(j,0)$. 

Therefore, $T_{1,j,k}$ and $S_{j,k}$ satisfy the
same recursion relation and have the same boundary
conditions. The Theorem follows.
\end{proof}

The reasoning of Theorem \ref{pathrepsthm} can be carried through by
considering paths with weights $y_\al(j)$ replaced by the weights
$y_\al(j,k)$ of eq.\eqref{timey}.  We therefore have the following.

\begin{cor}
If we define the weights $w_{a,b}(s,p)$ as follows:
\begin{eqnarray}
&& w_{m,m'}(s,p)=1, \quad w_{m',m}(s,p)=y_{2m+1}(s,p), \ (m\in \{1,...,r\}), 
\nonumber \\
&& w_{m,m+1}(s,p)=1, \quad w_{m+1,m}(s,p)=y_{2m}(s,p), \ (m\in\{1,...,r-1\}),
\nonumber  \\
&& w_{0,1}(s,p)=1, \quad w_{1,0}(s,p)=y_1(s,p), \nonumber 
\end{eqnarray}
with the $y_i(s,p)$ as in \eqref{timeyT}, then the following identity holds:
\begin{equation}
T_{1,j,k}=T_{1,j+p,k-p}\, Z_{j-p,j+p}^{0,0}(\{y_\al(s,k-p), \ j-p\leq s\leq j+p, 1\leq 
\al\leq 2r+1\}) \ .
\end{equation}
\end{cor}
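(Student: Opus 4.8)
The plan is to leverage Theorem \ref{pathrepsthm} directly, viewing it as the specialization of the present Corollary at the point $p=k$, and then to propagate the extra parameter $k$ through the entire argument of that theorem. First I would observe that the only place where the value of the time-shift was fixed in Theorem \ref{pathrepsthm} was in the choice of weights \eqref{choicew}, where we set the second argument of each $y_i$ to $0$. In the proof of Theorem \ref{pathrepsthm}, the weights $y_i(s,0)$ entered only through two facts: the weight-preservation of the involution $\varphi$ (which compares $w(\sigma(\mathrm{step}))$ with $w(\mathrm{step})$ via \eqref{timeyT}), and the recursion \eqref{recuHO} satisfied by the hard-particle partition functions. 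Neither of these used the specific value $k=0$; the identity \eqref{timeyT} holds verbatim with an arbitrary second argument, and the recursions are purely combinatorial in the graph structure. Hence the entire machinery goes through with $y_i(s,0)$ replaced by $y_i(s,k-p)$ throughout, as already remarked in the sentence preceding the Corollary.

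Concretely, I would first fix the shift parameter and rewrite the target. Setting $S_{j,k}^{(p)} = T_{1,j+p,k-p}\, \cZ_{j-p,j+p}^{0,0}$ where the transfer-matrix weights now carry the second argument $k-p$, the claim is $S_{j,k}^{(p)} = T_{1,j,k}$. The key point is that the conserved quantities $c_m(j)$ and the hard-particle partition functions $Z_m^{G_r}(j,k)$ carry a genuine $k$-dependence through \eqref{quantities} and \eqref{timey}, but by the Remark following Corollary \ref{correcu} and the Corollary after the hard-particle theorem, once we impose $t_{r+1,j,k}=1$ these become independent of $k$, so $Z_m^{G_r}(j,k')=c_m(j)$ for every $k'$, in particular for $k'=k-p$. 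This is what makes the shift harmless: the integrals of motion used in the linear recursion \eqref{linrecone} are the same regardless of the base time at which the weights are evaluated.

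The two halves of the proof of Theorem \ref{pathrepsthm} then transport as follows. For the recursion step, the involution Lemma \eqref{ornotobe} still holds with weights evaluated at second argument $k-p$, because $\varphi$ is weight-preserving for any fixed value of that argument; combined with $Z_m^{G_r}(j,k-p)=c_m(j)$, this shows $S_{j,k}^{(p)}$ satisfies \eqref{linrecone} in the variable $k$ exactly as before. For the initial conditions, Lemma \eqref{initzp} and the auxiliary quantities $W_i^{G_\al}(j)$ are likewise insensitive to the common second argument of the weights, so the computation identifying the boundary data of $S_{j,k}^{(p)}$ with those of $T_{1,j,k}$ reproduces verbatim. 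Since both sequences satisfy the same $(r+1)$-term linear recursion in $k$ and agree on $r+1$ consecutive values, they coincide for all $k$.

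The step I expect to require the most care is verifying that the shifted weights $y_i(s,k-p)$ still satisfy the weight-matching identity $w(\sigma(\mathrm{step}))=w(\mathrm{step})$ that underlies the involution, and correspondingly that the boundary cases ($i_1$ a half-diamond vertex versus a full-diamond spine vertex, cases (a) and (b) of $\varphi$) pair up correctly once the starting time of the path is shifted to $j-p$ rather than $j-k$. In principle this is automatic because $\varphi$ was defined purely graphically and the weight of each polygon depends only on the second argument through the \emph{same} substitution rule \eqref{timeyT}; but one must check that changing the initial time of the path from $j-2(r+1-m)$ to a value compatible with the endpoints $\cZ_{j-p,j+p}^{0,0}$ does not disturb the stripe decomposition used to define $\sigma$. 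I expect this to reduce to bookkeeping on the time coordinates $t_\ell=h(i_\ell)-1+t-2(\ell-1)$, which shift uniformly and therefore preserve all the adjacency and hard-particle constraints.
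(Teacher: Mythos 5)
Your proposal is correct, but it takes a noticeably different route from the paper. The paper's own proof is a two-sentence appeal to the translational invariance $T_{\al,j,k}\mapsto T_{\al,j,k+1}$ of the $T$-system: since the shifted variables $T'_{\al,j,k}:=T_{\al,j,k+(k-p)}$ satisfy the identical system, $T_{1,j,k}=T'_{1,j,p}$ is the \emph{same} function of the level-$(k-p,k-p+1)$ data as $T_{1,j,p}$ is of the level-$(0,1)$ data, so the corollary is a formal substitution into Theorem \ref{pathrepsthm} (the prefactor $T_{1,j+p,0}\to T_{1,j+p,k-p}$ included) with no need to reopen its proof. You instead re-trace the proof of Theorem \ref{pathrepsthm} with the second argument of every weight replaced by $k-p$, checking that the involution $\varphi$, the $k$-independence $Z_m^{G_r}(j,k')=c_m(j)$ of the conserved quantities, and the initial-condition computation via $W_i^{G_\al}$ are all insensitive to this common shift. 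That verification is sound (and the point you flag as delicate --- that the weight-matching $w(\sigma(\mathrm{step}))=w(\mathrm{step})$ only requires both sides to carry the \emph{same} second argument --- is exactly right), but it is strictly more work, and the reason each step survives the shift is precisely the translation invariance that the paper invokes once, globally. What your route buys is a self-contained check at the level of the combinatorial machinery; what the paper's route buys is brevity and a transparent structural explanation. Minor bookkeeping caveat: in your recursion step, varying $k$ at fixed $c=k-p$ means $p$ varies too, so the endpoint $j+p$ of the path is what stays fixed along the diagonal $j+k=\mathrm{const}$ while the starting time recedes by $2b$; your notation $S^{(p)}_{j,k}$ slightly obscures this, though the underlying argument is unaffected.
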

\begin{proof}
We may view this as a particular case of the translational invariance
$T_{\al,j,k}\to T_{\al,j,k+1}$ of the $T$-system, namely that
$T_{\al,j,k}$ is expressed as the {\it same} function of the initial
data $\{T_{\beta,j,k-p},T_{\beta,j,k-p+1}\}_{\beta\in I_r,j\in \Z}$ as
$T_{\al,j,p}$ is expressed in terms of
$\{T_{\beta,j,0},T_{\beta,j,1}\}_{\beta\in I_r,j\in \Z}$.  We deduce
that $T_{1,j,p}=T_{1,j+p,0}Z_{j-p,j+p}^{0,0}(\{y_\al(s,0)\})$ has the
same expression as $T_{1,j,k}=T_{1,j,p+k-p}$ in terms of
$y_\al(s,k-p)$, and the corollary follows, as the prefactor itself
comes from the substitution $T_{1,j+p,0}\to T_{1,j+p,k-p}$.
\end{proof}

\subsection{General $T$-system solution $T_{\al,j,k}$: families of non-
intersecting paths}\label{noninter}

We may interpret $T_{\al,j,k}$ directly in terms of paths
by use of the determinant expression of Theorem \ref{elim} for $T_{\al,j,k}$ in
terms of the $T_{1,\ell,m}$. Indeed, given weighted paths on an acyclic graph 
$\Gamma$, say with
partition function ${\mathcal Z}_{s,e}$ for paths starting at vertex $s$ and 
ending at vertex $e$,
the Lindstr\"om-Gessel-Viennot formula
gives an expression for the partition function of $\al$ non-intersecting paths on 
$\Gamma$ (i.e. such that
no to paths share a vertex) as ${\mathcal Z}_{s_1,...,s_\al;e_1,...,e_\al}=
\det_{1\leq i,j\leq \al}{\mathcal Z}_{s_i,e_j}$.
We obtain:
\begin{eqnarray}
T_{\al,j,k}&=&\det_{1\leq a,b\leq \al} \left(T_{1,j+k+2b-\al-1,0} \, 
{\mathcal Z}_{j-k+\al+1-2a,j+k+2b-\al-1}^{0,0}\right) \nonumber \\
&=&\left(\prod_{b=1}^\al T_{1,j+k+2b-\al-1,0}\right) \, {\mathcal Z}_{s_1,...,s_
\al;e_1,...,e_\al}^{0,0}\label{lgvalpha}
\end{eqnarray}
where ${\mathcal Z}_{s_1,...,s_\al;e_1,...,e_\al}^{0,0}$ stands for the partition 
function of families
of $\al$ non-intersecting paths in the plane representation of Section 
\ref{pathdefs}, 
starting at the points $s_a=(j-k+2a-\al-1,0)$, $a=1,2,...,\al$ and ending at the 
points $e_b=(j+k+\al+1-2b,0)$, $b=1,2,...,\al$. 
Alternatively, one may think of the partition function ${\mathcal Z}_{s_1,...,s_
\al;e_1,...,e_\al}$
as that of $\al$ ``vicious" walkers (i.e. never meeting at a vertex) on ${\tilde 
G}_r$, going
from the root to the root, respectively starting at times $j-k+2a-\al-1$ 
and ending at times $j+k+\al+1-2a$, $a=1,2,...,\al$, each step corresponding 
to a unit of time.

Interpreted in this way, the $T_{\al,j,k}$ are manifestly positive Laurent 
polynomials of the initial data,
via the weights $y_\beta(t)$ and the prefactor in \eqref{lgvalpha}. We therefore 
have the:

\begin{thm}\label{positheor}
The solution $T_{\al,j,k}$ of the $T$-system is expressed as a
positive Laurent polynomial of the initial data
$\bx_0=\{T_{\beta,j,0},T_{\beta,j,1}\}_{\beta\in I_r,j\in \Z}$ for all
$\al\in I_r$ and all $j,k\in \Z$.
\end{thm}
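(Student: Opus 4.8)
The plan is to derive positivity as a direct consequence of the path-model machinery already assembled, specifically Theorem \ref{pathrepsthm} and the determinant reduction of Theorem \ref{elim}. The key observation is that positivity need not be proven afresh: once $T_{\al,j,k}$ is written as the partition function of a family of $\al$ non-intersecting (vicious) walkers on the graph $\wG_r$, with nonnegative weights, the conclusion is automatic. So the real content is assembling that expression and checking its ingredients are manifestly positive Laurent monomials in the fundamental data $\bx_0$.

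First I would invoke Theorem \ref{elim} to write $T_{\al,j,k}=\det_{1\leq a,b\leq \al}(T_{1,j-a+b,k+a+b-\al-1})$, reducing everything to the level-one variables $T_{1,\ell,m}$. Next I would substitute the path representation of Theorem \ref{pathrepsthm}, namely $T_{1,\ell,m}=T_{1,\ell+m,0}\,\cZ_{\ell-m,\ell+m}^{0,0}$, into each matrix entry. After collecting the prefactors $T_{1,\cdot,0}$ out of the determinant (they factor out as a product over columns, giving $\prod_{b=1}^\al T_{1,j+k+2b-\al-1,0}$), the determinant of the remaining path partition functions $\cZ_{s_a,e_b}^{0,0}$ is, by the Lindstr\"om--Gessel--Viennot theorem applied on the acyclic planar graph $\wG_r$, exactly the partition function $\cZ_{s_1,\dots,s_\al;e_1,\dots,e_\al}^{0,0}$ of $\al$ non-intersecting paths from the prescribed start points $s_a=(j-k+2a-\al-1,0)$ to end points $e_b=(j+k+\al+1-2b,0)$. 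This is precisely Equation \eqref{lgvalpha}.

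The positivity then follows because every factor in this expression is a positive Laurent monomial in $\bx_0$. The prefactor $\prod_b T_{1,j+k+2b-\al-1,0}$ is literally a product of fundamental variables. The partition function $\cZ_{s_1,\dots,s_\al;e_1,\dots,e_\al}^{0,0}$ is a sum over families of non-intersecting paths, each weighted by a product of step weights $w_{a,b}(s)$; by \eqref{choicew} these are $1$ or one of the $y_i(s,0)$, and by \eqref{timeyT} each $y_i$ is a ratio of products of $T_{\beta,\cdot,0}$ and $T_{\beta,\cdot,1}$, hence a Laurent monomial with coefficient $+1$. Crucially, the LGV formula for vicious walkers produces a \emph{sum} (not a signed alternating sum) over non-intersecting families, so no cancellations or negative signs intrude; the determinant's sign structure has already been absorbed into the non-intersecting condition. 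Thus $T_{\al,j,k}$ is a sum of positive Laurent monomials in $\bx_0$, which is the assertion.

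The main obstacle, and the only point requiring genuine care, is the applicability of the Lindstr\"om--Gessel--Viennot theorem in this \emph{time-dependent} (weighted-by-step-time) setting, since the standard LGV theorem is stated for static edge weights on an acyclic graph. The paper's own remarks (the closing paragraph of the introduction and Section \ref{noninter}) assert that the generalization of LGV for ``strongly non-intersecting paths'' carries over with time-dependent weights, so I would rely on that established non-commutative/time-dependent counterpart rather than reprove it. The remaining check is purely bookkeeping: that the start and end times $j-k+2a-\al-1$ and $j+k+\al+1-2b$ are consistent with the $\cZ_{t_1,t_2}^{a,b}$ conventions (in particular the convention $\cZ=0$ when $t_1>t_2$, which handles the boundary entries of the determinant automatically), and that these align with the plane-representation offsets so that LGV yields exactly \eqref{lgvalpha}. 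Once that alignment is confirmed, Theorem \ref{positheor} is immediate.
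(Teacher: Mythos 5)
Your argument is the paper's argument for the main range of $k$, but it has a genuine gap for small and negative $k$, and the gap sits exactly at the point you dismiss as ``purely bookkeeping.'' Theorem \ref{pathrepsthm} reads $T_{1,\ell,m}=T_{1,\ell+m,0}\,\cZ^{0,0}_{\ell-m,\ell+m}$, and this identity is only valid for $m\geq 0$: when $m<0$ the right-hand side vanishes by the convention $\cZ^{a,b}_{t_1,t_2}=0$ for $t_1>t_2$, while $T_{1,\ell,m}$ is a perfectly nonzero cluster variable. The determinant of Theorem \ref{elim} has entries $T_{1,j-a+b,k+a+b-\al-1}$, and whenever $k+a+b<\al+1$ for some $a,b\in\{1,\dots,\al\}$ (which happens as soon as $k<\al+1$, in the paper's conservative accounting) some entries have negative third index. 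Substituting the path representation into those entries replaces a nonzero quantity by $0$, so the resulting determinant is \emph{not} $T_{\al,j,k}$; the convention does not ``handle the boundary entries automatically'' --- it silently breaks the identity \eqref{lgvalpha}. Your proof is therefore complete only for $k\geq\al+1$, which is precisely the case the paper calls ``clear from the above discussion.''

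The paper closes the gap with two additional arguments that your proposal omits. For $0<k<\al+1$ it observes that $T_{\al,j,k}$ depends only on the initial data with $\beta\geq\al+1-k$, so one may truncate to an $A_{r'}$ system with $r'=r-(\al+1-k)$ and reinterpret $T_{\al,j,k}$ as $T_{\al',j,k}$ with $\al'=k-1$; for that $\al'$ one has $k\geq\al'+1$ and the LGV formula applies. For $k\leq 0$ it uses the reflection symmetry of the $T$-system under $k\mapsto 1-k$, which expresses $T_{\al,j,1-k}$ in terms of the swapped initial data $\{T_{\beta,j,1},T_{\beta,j,0}\}$ by the same positive formula. You would need to supply both of these (or some substitute, e.g.\ a direct argument that the paper does not give) before the theorem is proved for all $j,k\in\Z$ as stated. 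The rest of your write-up --- the factorization of the prefactor, the positivity of the weights \eqref{timeyT}, and the reliance on the time-dependent LGV/strongly-non-intersecting machinery --- matches the paper's Section \ref{noninter} and is fine.
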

\begin{proof}
The statement is clear from the above discussion for $k\geq \al+1$ for
which all the partition functions in the determinant \eqref{lgvalpha}
have the form ${\mathcal Z}^{0,0}_{t,u}$ with $t\leq u$, and therefore
can be interpreted within the LGV framework. For $k\geq 0$ however,
from the structure of the $T$-system, it is clear that $T_{\al,j,k}$
only depends on a finite part of the initial data
$\{T_{\beta,\ell,0},T_{\beta,\ell,1},\ \vert \beta-\al \vert <k,
\vert\ell-j\vert <k\}$. In particular, if $0<k<\al+1$, then as only
the $\beta\geq \al+1-k>0$ are involved, we may truncate the size of
the $T$-system to some $A_{r'}$, with $r'=r-(\al+1-k)<r$. Upon
renaming the initial data accordingly, we may interpret $T_{\al,j,k}$
as $T_{\al',j,k}$ in this new $T$-system, where
$\al'=\al-(\al+1-k)=k-1$. For this $\al'\geq k-1$ the LGV formula
applies, and positivity follows. Finally, note that the expression of
$T_{\al,j,1-k}$ in terms of $\{T_{\beta,j,0},T_{\beta,j,1}\}$ is the
same as that of $T_{\al,j,k}$ in terms of the reflected initial data
$\{T_{\beta,j,1},T_{\beta,j,0}\}$, hence positivity follows for $k<0$
as well.
\end{proof}

\section{Operator formulation and positivity in terms of mutated initial data}\label{sec:otherpaths}
Let $\mathcal A$ be the space of Laurent polynomials in the variables
$\{T_{\al,j,k}\}$.  We consider the invertible ``shift operator'' $d$
acting on the infinite-dimensional vector space over $\mathcal A$ with
basis $\{|t\rangle: t\in \Z\}$, with $d |t\rangle = |t-1\rangle$. It
acts on the restricted dual space $V^*$, with basis $\langle t |$ such
that $\langle t|t'\rangle = \delta_{t,t'}$, as $\langle t | d =
\langle t+1|.$ We consider the algebra of formal Laurent series in $d$
with coefficients in $\mathcal A$ acting on $V$. All operator
relations which we derive below are considered in the weak sense, as
identities between matrix elements. We also adopt the operator
notation for diagonal operators in this basis, for example, 
$w_{a,b}|t\rangle=w_{a,b}(t)|t\rangle$.

\subsection{An expression using operator continued fractions}\label{pathcfrac}

Theorem \ref{pathrepsthm} implies
\begin{equation}
T_{1,j,k}=T_{1,j+k,0}\, \left( \cT(j-k)\cT(j-k+1)\cdots
\cT(j+k-1)\right)_{0,0}
\end{equation}
where the transfer matrix $\cT(s)$ 
is defined in Equation \eqref{choicew}.

We define the operator-valued transfer matrix $\bbT$ to be the matrix
with entries $\langle t|\bbT_{a,b}=\cT_{a,b}(t)\langle t+1 |$. We also
define operator-valued weights $\bbY_\al$, such that 
\begin{equation}\label{skelweight}
\langle
t|\bbY_{\al}  =y_\al(t,0)\langle t+1|
\end{equation}
where the $y$'s are defined in \eqref{timeyT}. 

Using these, we can write 
\begin{equation}
T_{1,j,k}=T_{1,j+k,0}\, \langle j-k\vert\left( \bbT
^{2k}\right)_{0,0}\vert j+k\rangle =T_{1,j+k,0}\, \langle
j-k\vert\left( (I-\bbT)^{-1}\right)_{0,0}\vert j+k\rangle,
\end{equation}
where $(I-\bbT)^{-1}:=\sum_{n\geq 0} (\bbT)^n$.
Therefore, the operator $\bbF=\Big((I-\bbT)^{-1}\Big)_{0,0}$ generates
the variables $T_{1,j,k}$. 

To compute $\bbF$, we row-reduce the matrix $I-\bbT$.  The result
can be written as
a non-commutative continued fraction:
\begin{eqnarray}\label{opcont}
&& \hskip.5in\bbF= \\
&&\hskip-.3in\left( 1-d \left( 1-d \left( 1-d \bbY_3 -d \left( \cdots
(1-d\bbY_{2r-1}-d(1-d \bbY_{2r+1})^{-1}  \bbY_{2r}
)^{-1} \cdots \right)^{-1} \bbY_4 \right)^{-1} \bbY_2 \right)^{-1}
\bbY_1 \right)^{-1}. \nonumber
\end{eqnarray}
Alternatively, we can write $\bbF=\bbF_0$ where the operators $\bbF_i$
are defined inductively:
\begin{eqnarray*}
\bbF_{r+2}&=&0,\\
\bbF_k&=&\left (1 -d \bbY_{2k-1}  -d \bbF_{k+1} \bbY_{2k}
\right)^{-1},\quad  (k=r+1,r,...,3,2),\\ 
\bbF_1&=& ( 1-d \bbF_2 \bbY_2 )^{-1},\quad
\bbF_0= ( 1-d \bbF_1  \bbY_1 )^{-1}=\bbF,
\end{eqnarray*}
where each term is understood
formal power series in $d$.

\begin{figure}
\centering
\includegraphics[width=13.cm]{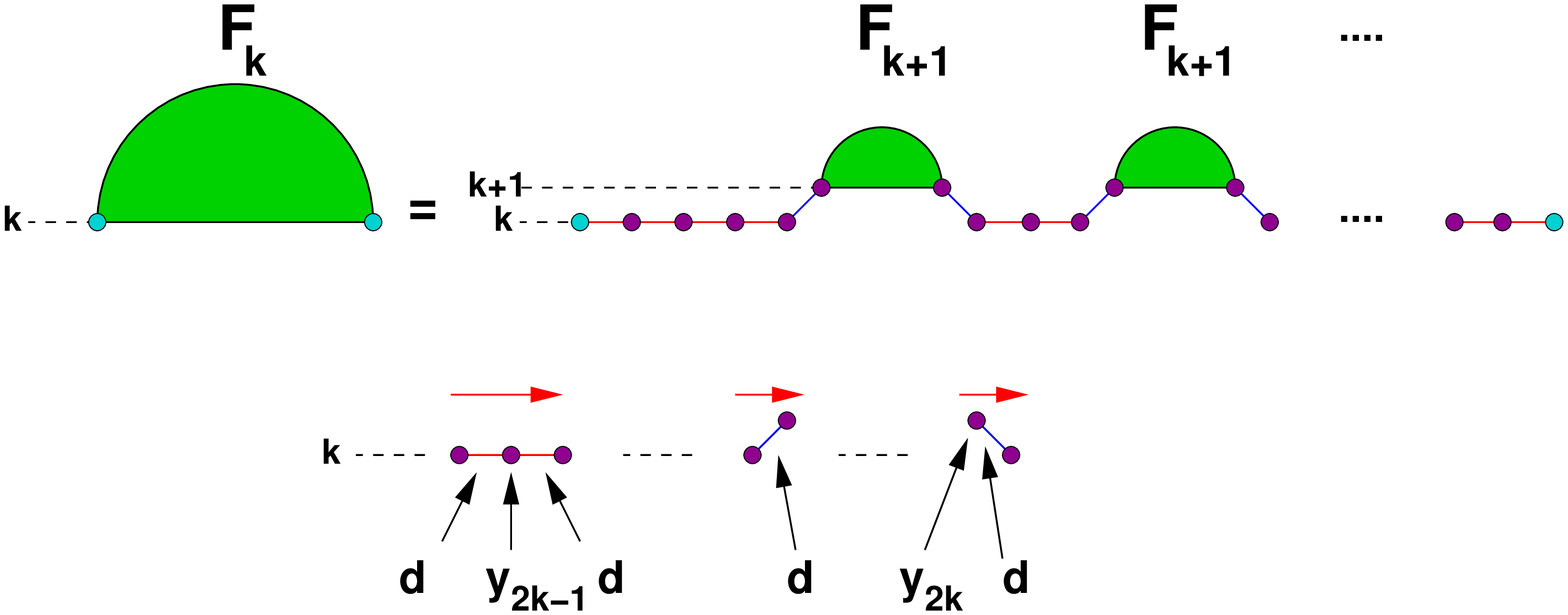}
\caption{\small The enumeration of paths on ${\wG}_r$ with
time-dependent weights $y_\al(t)$.  The paths from and to height $k$
which stay above height $k$ (generated by $\bbF_k$) are related to
those from height $k+1$ by arranging any number of horizontal step-pairs $k
\to k\to k$
and up-down step-pairs $k\to k+1\to k$, in-between which we insert any path 
from and
to height $k+1$ staying above height $k+1$.  The operator weights are
indicated on the bottom.}
\label{fig:opacityofdope}
\end{figure}

This expression is easily understood in terms of paths. Note
that each time increment corresponds to an insertion of an operator $d$.
An up step at height $k$ followed by a down
step contributes a weight $d \bbY_{2k} $, while a level step at height
$k$ contributes the weight $d \bbY_{2k-1} $. 

The operator
generating function for paths above height $k$, $\bbF_k$, is obtained by
shuffling the two following possibilities: (i) a level step pair $k\to k \to k$ (ii)
insertion of a path above height $k+1$ between steps $k\to k+1$ and $k+1\to 
k$ 
(see Figure \ref{fig:opacityofdope}).

\subsection{Mutations and operator continued fractions}
As with the $Q$-system of \cite{DFK3}, we would like to have
expressions for $T_{\al,j,k}$ as functions of other possible initial
data of cluster seeds in the cluster algebra. Cluster positivity means
that they are positive Laurent polynomials in this data, and we can
prove this by giving path generating functions on graphs with positive
weights for them. The operator formulation introduced above was
designed to allow us to do this in the case of special seeds of the form
\begin{equation}\label{xM}
\bx_{\bM}=\{T_{\al,j,m_{\al}+i}|i=0,1, \al\in I_r, j\in \Z\},
\end{equation}
where
$\bM$ is a Motzkin path of length $r$: $\bM = (m_1,...,m_r)$ with
$|m_i-m_{i+1}|\leq 1$.

This case is special, because it the non-commutative version of our
construction in \cite{DFK3} for the $Q$-system. The only difference is
that we must now use the operator-valued transfer matrix, instead of a
scalar, to account for time-dependent weights.

\subsubsection{Compound mutations and restricted initial
  data}\label{restrictinit}  The cluster seeds in
Equation \eqref{xM} are obtained from $\bx_0$ by acting on it with a
sequence of the compound mutations of the form
\begin{equation}\label{mutal}
\mu_\al = \prod_{j\in \Z} \mu_{\al,j},\qquad 
\mu_{\overline{\al}} = \prod_{j\in \Z} \mu_{\overline{\al},j}.
\end{equation}
Note that the mutation matrix $B_{0}$ has the property that
$B_{\al,\al}^{j,j'}=0$ if $j\neq j'$, hence $\mu_{\al,j}$ commutes
with $\mu_{\al,j'}$, so the compound mutations are well-defined.

The mutations \eqref{mutal} 
act on initial data $\bx_0$ via the {simultaneous} use of
all relations \eqref{Tsys} for all $j\in \Z$ to transform
$T_{\al,j;k-1}\to T_{\al,j;k+1}$ (forward mutation) or $T_{\al,j;k+1}
\to T_{\al,j;k-1}$ (backward mutation), the action being that of
$\mu_\al$ when $k$ is odd and $\mu_{\overline{\al}}$ when $k$ is
even.
Starting from the seed $\bx_0$, and acting only with
\eqref{mutal} generates a restricted set of cluster seeds. If,
moreover, we require that each of the mutations be one of the
$T$-system equations, we obtain only seeds of the form $\bx_\bM$ as in
\eqref{xM}. 

\begin{remark}
This is very similar to the situation of
Reference \cite{DFK3}, where seeds of type $\bx_\bM$ consist of variables
$\{R_{\al,m_\al}; R_{\al,m_\al+1}\}$. Here, we replace each variable
  $R_{\al, m}$ with the infinite sequence $(T_{\al,j,m})_{j\in\Z}$.
\end{remark}

\subsubsection{Operator continued fraction rearrangements}

In \cite{DFK3}, we have shown that the generating function for
$R_{1,n}$ may be expressed in terms of any mutated seed $\bx_\bM$ via
local rearrangements of the initial continued fraction in terms of the
seed $\bx_0$.  Here, we give the non-commutative version of the
starting point, which is operator version of the two rearrangement
lemmas for fractions used in \cite{DFK3}.

The following are operator identities to be understood as identities
between matrix elements of Laurent series in $d$. They are proved by a
simple calculation.

\begin{lemma}\label{rerootlemma}
Let $\bbA,\bbB$ be elements in $\mathcal A((d))$. Then
\begin{equation}\label{reroot}
1+d (1-\bbA d-d \bbB)^{-1} \bbA= \big( 1-d(1-d\bbB)^{-1}\bbA\big)^{-1}
\end{equation}
\end{lemma}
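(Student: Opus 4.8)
The plan is to prove the identity \eqref{reroot} directly as a formal power-series manipulation in $d$, treating both sides as elements of $\mathcal A((d))$ acting in the weak sense on matrix elements. Since the only operators involved are $d$ (the shift) and the diagonal coefficient operators $\bbA,\bbB$, the essential content is purely algebraic: I want to verify an equality of two formal Laurent series. The cleanest route is to start from the right-hand side, expand the inverse $\big(1-d(1-d\bbB)^{-1}\bbA\big)^{-1}$ as a geometric series $\sum_{n\geq 0}\big(d(1-d\bbB)^{-1}\bbA\big)^{n}$, and show term-by-term that it reorganizes into $1+d(1-\bbA d-d\bbB)^{-1}\bbA$.

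First I would rewrite the inner resolvent on the left, $(1-\bbA d-d\bbB)^{-1}$, as the geometric series $\sum_{m\geq 0}(\bbA d+d\bbB)^{m}$; then the left-hand side becomes $1+d\sum_{m\geq 0}(\bbA d+d\bbB)^{m}\bbA$. The strategy is to factor the combined shift-increment $\bbA d+d\bbB$ in a way that exposes the repeated block $d(1-d\bbB)^{-1}\bbA$ appearing on the right. Concretely, I would split each word in $(\bbA d+d\bbB)^{m}$ according to its maximal runs of the $d\bbB$ letter, so that a typical monomial is an alternation of blocks $d\bbB\cdots d\bbB$ (contributing a factor of the geometric series $(1-d\bbB)^{-1}$ once resummed) separated by the letter $\bbA d$. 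Resumming all the $d\bbB$-runs should convert the sum $\sum_m(\bbA d+d\bbB)^m$ into $\big(1-\bbA\,d(1-d\bbB)^{-1}\big)^{-1}$, after which multiplying by the prefactor $d$ and the trailing $\bbA$ and adding the $1$ is a matter of matching one resolvent identity against the geometric expansion of the right-hand side. The key bookkeeping is that the shift $d$ and the coefficients do not commute, so all products must be kept in strict left-to-right order; in particular, the rearrangement must respect the fact that $d$ carries a time shift past each $\bbA$ and $\bbB$.

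The main obstacle I anticipate is precisely this non-commutativity: because $d\,w_{a,b}=w_{a,b}(\,\cdot\,)d$ shifts the time argument, one cannot naively collect powers of $d$, and the ``geometric series in $d\bbB$'' must be summed with the shifts tracked correctly. Rather than fight the combinatorics of the non-commutative words, I expect the more robust approach is to avoid the explicit expansion altogether and instead verify the identity by cross-multiplication: multiply both sides on the right by $\big(1-d(1-d\bbB)^{-1}\bbA\big)$ and check that the left-hand side times this factor collapses to the identity operator. Expanding $\big(1+d(1-\bbA d-d\bbB)^{-1}\bbA\big)\big(1-d(1-d\bbB)^{-1}\bbA\big)$, the cross terms should cancel after a single application of the resolvent identity $(1-\bbA d-d\bbB)^{-1}(1-\bbA d-d\bbB)=1$, reorganized as $(1-\bbA d-d\bbB)^{-1}=(1-d\bbB)^{-1}+(1-d\bbB)^{-1}\bbA d(1-\bbA d-d\bbB)^{-1}$. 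This reduces the whole statement to one elementary algebraic rearrangement of resolvents, which is well-defined as formal power series in $d$ and sidesteps the delicate word-combinatorics. This is the ``simple calculation'' the paper alludes to, and checking that all terms are legitimate formal Laurent series (each coefficient of $d^n$ is a finite sum in $\mathcal A$) is the only convergence point that needs a remark.
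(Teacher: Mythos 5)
Your proposal is correct and is essentially the paper's own argument: the paper offers no details beyond ``proved by a simple calculation,'' and your cross-multiplication via a resolvent identity is precisely that calculation. One small bookkeeping point: the identity you quote, $X=Y+Y\bbA dX$ with $X=(1-\bbA d-d\bbB)^{-1}$ and $Y=(1-d\bbB)^{-1}$, is the one that verifies the product in the order $(1-dY\bbA)(1+dX\bbA)=1$, whereas for right-multiplication as you state it one should use the companion identity $X=Y+X\bbA dY$ (equivalently $(1-\bbA d-d\bbB)Y=1-\bbA dY$); both hold by the same one-line computation, and either suffices since a one-sided inverse of a formal power series in $d$ with invertible constant term is two-sided.
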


\begin{lemma}\label{reargtlemma}
Let $\bbA,\bbB,\bbC,\bbU$ be elements in $\mathcal A((d))$, with $\bbA
+ \bbB$ invertible. Then
\begin{equation}\label{reargt}
\bbA+(1-d (1-\bbU)^{-1}\bbC)^{-1} \bbB= \big( 1-\bbU-d(1-d\bbC')^{-1}\bbB' 
\big)^{-1}\bbA'
\end{equation}
where
\begin{equation}\label{abcprime}
\bbA'=\bbA+\bbB, \quad \bbB'=\bbC \bbB (\bbA+\bbB)^{-1}, \quad 
\bbC'=d^{-1}\bbC \bbA(\bbA+\bbB)^{-1} d
\end{equation}
\end{lemma}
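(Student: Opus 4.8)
The plan is to treat both \eqref{reroot} and \eqref{reargt} as formal identities in the skew ring $\mathcal A((d))$, where $d$ is invertible and $d\,a=\sigma(a)\,d$ for the shift automorphism $\sigma$ induced by $\langle t|d=\langle t+1|$. Every factor $(1-d\,(\cdots))^{-1}$ is understood as the geometric series $\sum_{n\geq 0}(d\,(\cdots))^n$, which is a well-defined Laurent series in $d$ because each step raises the $d$-degree, so only finitely many terms contribute to a given matrix element $\langle t|\cdot|t'\rangle$. An identity is then established by clearing the inverses through their defining resolvent relations and checking the resulting polynomial identity in $d$.

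I would first record the model computation for Lemma \ref{rerootlemma}, since Lemma \ref{reargtlemma} is verified in the same spirit. Writing $X=(1-d\bbB)^{-1}$ and $Y=(1-\bbA d-d\bbB)^{-1}$, it suffices to show $(1+dY\bbA)(1-dX\bbA)=1$. Expanding the product and collecting the $\bbA$-terms, the statement becomes $Y(1-\bbA dX)=X$; multiplying on the right by $X^{-1}=1-d\bbB$ turns this into $Y(1-\bbA d-d\bbB)=1$, which is exactly the definition of $Y$. So \eqref{reroot} holds.

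For Lemma \ref{reargtlemma} the decisive first step is to rewrite the left-hand side so that $\bbA'=\bbA+\bbB$ factors out on the right. Set $L=(1-\bbU)^{-1}$ and $G=(1-dL\bbC)^{-1}$, so $G=1+GdL\bbC$, and use $\bbC\bbB=\bbB'\bbA'$ (the definition $\bbB'=\bbC\bbB(\bbA')^{-1}$) to get
\[
\bbA+G\bbB=\bbA'+GdL\bbC\bbB=\bbA'+GdL\bbB'\bbA'=(1+GdL\bbB')\bbA'.
\]
Cancelling the invertible $\bbA'$ reduces the claim to the cleaner operator identity $1+GdL\bbB'=\big(1-\bbU-dQ\bbB'\big)^{-1}$, with $Q=(1-d\bbC')^{-1}$. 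The second key trick is to evaluate $d\bbC'$: because $\bbC'=d^{-1}\bbC\bbA(\bbA')^{-1}d$ is a $d$-conjugate, one has $d\bbC'=\bbC\bbA(\bbA')^{-1}d$, hence $Q=(1-\bbC\bbA(\bbA')^{-1}d)^{-1}$ and $Q=1+Q\,\bbC\bbA(\bbA')^{-1}d$. This is the entire purpose of defining $\bbC'$ as a conjugate: it slides the extra $d$ from the left of the resolvent to its right, so that the block $\bbC\,(\cdot)\,(\bbA')^{-1}$ sits in the same position inside both $Q$ and $\bbB'$, the two being distinguished only by the splitting $\bbA'=\bbA+\bbB$. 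I would then clear the remaining inverse by verifying $\big(1-\bbU-dQ\bbB'\big)(1+GdL\bbB')=1$, expanding $G$ and $Q$ through their resolvent relations and checking that the $\bbC$-blocks cancel in pairs.

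I expect the main obstacle to be precisely the noncommutativity of $d$ with the coefficients, not the algebra itself. The identity is genuinely \emph{false} if $d$ is treated as central: the naive commutative evaluations of the two sides reduce to $(u\bbA'-c\bbA)/(u-c)$ and $\bbA'(\bbA'-c\bbA)/\big(u(\bbA'-c\bbA)-c\bbB\big)$ with $u=1-\bbU,\ c=d\bbC$, which already disagree at second order in $u$. Hence the computation cannot be carried out formally, and every time $d$ is commuted past a coefficient — in particular leftward through $L=(1-\bbU)^{-1}$ and through $\bbC$ — the resulting shift by $\sigma$ must be tracked. The role of keeping $\bbA'$ factored on the right, together with the conjugation in $\bbC'$, is exactly to guarantee that each such shift is absorbed, so that after clearing denominators the surviving terms telescope to $1=1$. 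Organising this shift-bookkeeping correctly is the only delicate point; the rest is routine resolvent manipulation.
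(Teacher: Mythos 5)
Your reduction is the natural one and the first half of it is correct: setting $L=(1-\bbU)^{-1}$, $G=(1-dL\bbC)^{-1}$ and using $\bbC\bbB=\bbB'\bbA'$ to factor the left-hand side as $(1+GdL\bbB')\bbA'$ is exactly the right move, your observation that $d\bbC'=\bbC\bbA(\bbA')^{-1}d$ is the point of the conjugation, and your verification of Lemma \ref{rerootlemma} is fine. (The paper gives no written proof beyond ``a simple calculation,'' so there is nothing more detailed to compare against.)

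The gap is in the final step, and more seriously in how you read your own commutative test. The identity you propose to verify, $(1-\bbU-dQ\bbB')(1+GdL\bbB')=1$, cannot hold for $\bbU\neq 0$: collecting the terms with no factor of $\bbB'$ gives $1-\bbU$, not $1$, and no shift bookkeeping can repair a discrepancy that contains no $d$. Your observation that the two sides disagree when $d$ is treated as central is not evidence that non-commutativity is essential --- it is a counterexample. The lemma quantifies over arbitrary $\bbA,\bbB,\bbC,\bbU\in\mathcal A((d))$, and central elements (rational constants) are among them, so an identity valid for all such elements cannot fail on the central specialization. Indeed the failure is visible with no $d$ at all: taking $\bbB=\bbC=0$ gives left-hand side $\bbA$ and right-hand side $(1-\bbU)^{-1}\bbA$. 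So \eqref{reargt} as printed is false unless $\bbU=0$; there is evidently a typo or an implicit restriction in the statement (note that every invocation in Section \ref{mutandis} and Appendix \ref{atwo} has $\bbU=0$). For $\bbU=0$ your outline does close, and cleanly: one needs $1+Gd\bbB'=(1-dQ\bbB')^{-1}$, which follows from the sufficient identity $Gd=dQ(1+\bbB'Gd)$; multiplying on the left by $(1-d\bbC')d^{-1}$ and using $d\bbC'd^{-1}=\bbC\bbA(\bbA')^{-1}$ together with $\bbC\bbA(\bbA')^{-1}+\bbC\bbB(\bbA')^{-1}=\bbC$ reduces it to $(1-d\bbC)Gd=d$, i.e.\ to the definition of $G$. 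You should either restrict the lemma to $\bbU=0$ and finish as above, or first locate and prove a corrected statement for general $\bbU$; as written, your plan cannot be completed.
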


\begin{remark}
Lemma \ref{rerootlemma} has a path interpretation.  The r.h.s. of
equation \eqref{reroot} is the generating function for paths on the
integer segment $[0,2]$, from vertex $0$ to $0$, with operator-valued weights:
$$
w(0\to 1)=w(1\to 2)=d, \quad w(2\to 1)=\bbB, \quad w(1\to 0)=\bbA
$$ The l.h.s. of equation \eqref{reroot} decomposes these paths into
the trivial one (length $0$, contribution $1$), and all the others,
which start with a step $0\to 1$ and end up with a step $1\to 0$, with
respective weights $d$ and $\bbA$ (in this order). In-between, we have
the generating function for ``rerooted" paths, from vertex $1$ to
vertex $1$, which consist of arbitrary sequences of either steps $1\to
0\to 1$ (with weight $\bbA d$) or steps $1\to 2\to 1$ (with
weight $d \bbB$). We call the rearrangement of this Lemma a ``rerooting''.
\end{remark}

\subsubsection{General case: mutations as rearrangements}\label{mutandis}

For each Motzkin path $\bM$, the solution of the $T$-system is can be
expressed in terms of the initial data at $\bx_\bM$ as
\begin{equation}
T_{1,j,k}=T_{1,j+k-m_1,m_1}\, \langle j-k+m_1\vert \bbF_\bM \vert
j+k-m_1\rangle
\end{equation}
for some operator continued fraction $\bbF_\bM$. Our main claim is
that this fraction is
obtained from $\bbF$ \eqref{opcont} via a succesion of applications of
Lemmas \ref{rerootlemma} and \ref{reargtlemma}.

For each Motzkin path we will define weights $\bbY_i(\bM)$, ($i\in
I_{2r+1}$), which are monomials in $\bx_\bM$ and $d$. The fraction $\bbF_\bM$ is a
function of these. As in \cite{DFK3}, we find that the effect of
mutations on
$\bbF_\bM\mapsto \bbF_{\bM'}$ is the following:
\begin{itemize}
\item If $\al=1$, use Lemma \ref{rerootlemma} to write
\begin{equation}\label{rerootone}
\bbF_\bM=1+d \bbF_{\bM}' \bbY_1(\bM).
\end{equation}
where $\langle t |\bbY_1(\bm)= T_{1,t,m_1+1}/T_{1,t+1,m_1}\langle t+1|$.
Then Lemma \ref{reargtlemma} enables us to rewrite $\bbF_{\bM}'$ as
$\bbF_{\bM'}$, a function of $\bx_{\bM'}$.
\item If $\al>1$, apply Lemma \ref{reargtlemma} to the part
of $F_\bM$ involving the weights $\bbY_\beta(\bM)$ with $\beta\geq 2\al-1$. 
\end{itemize}
In both cases, the weights $\bbY_\beta(\bM)$ are transformed into weights
$\bbY_\beta(\bM')$.

We will provide the precise construction and proof in Section
\ref{pathposit}, but for clarity, we refer the reader to Appendix \ref{atwo},
where the example of $A_2$ is worked out completely.

\subsection{Paths on graphs with non-commutative weights}\label{pathposit}

In this section, we define graphs with weights in $\cA[d,d^{-1}]$. The
generating functions $\bbF_\bM$ are path partition functions on these
graphs. The graphs are identical to those introduced in \cite{DFK3},
and the weights contain exactly the same information contained in the
two-dimensional representation of paths on these graphs introduced in
\cite{DFK3}. The construction presented here is therefore a rephrasing
of these paths in terms of operators.

\begin{remark} Although the two-dimensional representation of paths
  used here is identical to the one we used in \cite{DFK3}, we did
  not, in the earlier paper, have use for the full information
  contained in this path representation. In particular, the
  horizontal coordinate (``time'' in our language) had no
  interpretation in the context of $Q$-systems. Here, it corresponds to
  what is known as the spectral parameter in the $T$-system equations.
\end{remark}

Since one is used to reading lattice paths from left to
right, we have chosen to act on the space $V^*$
instead of $V$. In that way, the order in which they act on the space
is the same as the order the path is traversed.

\subsection{The target graphs $\Gamma_\bm$}
\begin{figure}
\centering
\includegraphics[width=8.cm]{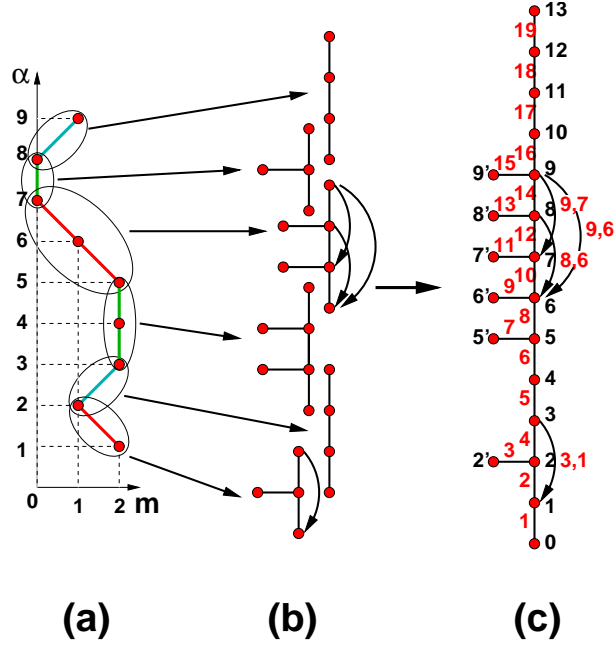}
\caption{\small The construction of the graph $\Gamma_\bM$ (c) for some 
sample Motzkin path $\bM$ (a) for $A_9$.
All the pieces glued (b) are represented vertically. The skeleton edges in (c) 
are labelled $1$ to $19$, the spine vertices
$0$ to $13$.}
\label{fig:glugraph}
\end{figure}

Let $\bM$ be a Motzkin path. We decompose it into pieces which do not
change direction: $\bM=\bM_1 \cup \bM_2 \cup \cdots$, where 
$\bM_i=(a,a+k, a+2k, \cdots, a+ (l_i-1) k)$ with $k=0,1$ or $-1$. The
{\em type} 
of subpath is called $k$. All the graphs used below must be drawn vertically
(see Fig.\ref{fig:glugraph}), which makes unambiguous the notion of top and 
bottom edges.

We construct a graph $\Gamma_{\bM_i}$ for each $i$ as follows:
\begin{itemize}
\item If $k=0$ then $\Gamma_{\bM_i}=\wG_{l_i}''$, the graph
  $\wG_{l_i}$ of Figure \ref{fig:dualgr} (represented vertically), with its bottom 
and top edges
  removed. 
\item If $k=1$, then $\Gamma_{\bM_i}$ is a simple (vertical) chain with $2l_i$
  vertices.
\item If $k=-1$, then $\Gamma_{\bM_i}$ is the graph $\wG_{l_i}''$
(represented vertically)
  decorated with additional oriented ``descending" edges $b\to a$ with
  $l_i+1\geq b>a+1>1$. 
\end{itemize}
We then glue the graphs: $\Gamma_{\bM_i}|\Gamma_{\bM_{i+1}}$ is the
graph obtained by identifying the top edge of $\Gamma_{\bM_i}$ with
the bottom edge of $\Gamma_{\bM_{i+1}}$.  Define
$\Gamma_\bM''=\Gamma_{\bM_1}|\Gamma_{\bM_{2}}|\cdots$, and
$\Gamma_{\bM}$ is $\Gamma_\bM''$ together with one additional
bottom and top edge and vertex. The graph $\Gamma_\bM$ is rooted at its 
bottom vertex.

Each graph thus constructed has a {\em spine}, namely a maximal vertical 
chain
of vertices consisting of the unprimed vertices of the various pieces glued.
We label the spine vertices
consecutively starting from $0$ at the bottom (see Fig.\ref{fig:glugraph}(c)). 
The vertices off the spine, which are attached only
to a vertex $i$ are labeled $i'$. We define
the {\em skeleton} of $\Gamma_\bM$ as the
graph with all edges $i\to j$ removed where $i>j+1$. The edges of the 
skeleton,
referred to as skeleton edges 
are labeled $1,2,...,2r+1$ from bottom to top (see Fig.\ref{fig:glugraph}(c)).

A path traverses each edge of a graph in one direction or another, and
in our formulation, we weight steps in each direction
differently. Therefore, we now consider the non-oriented edges in
$\Gamma_\bM$ as doubly-oriented edges, each orientation corresponding
to a different weight.

Assign a weight $\bbY_{a,b}(\bM)$ to the edge $a\to b$. The weight of any
edge away from the root, $\bbY_{i,i+1}=\bbY_{j,j'}=d$.  Skeleton edges
$\bbY_\al(\bM)$ pointing towards the root are independent weights,
which we define below. Weights on edges $i\to i-k$ with $k>1$ are
defined as the following product involving only skeleton weights or their inverses:
\begin{equation}\label{redun}
\bbY_{i,i-k}=\left(\prod_{a=i-1}^{i-k+1} \bbY_{a+1,a}\,
(\bbY_{a,a'})^{-1} (\bbY_{a',a})^{-1}\right)\,\bbY_{i-k+1,i-k}. 
\end{equation}
The ordered product is taken over edges from top to bottom, along a path from
vertex $i$ to $i-k$.  Note that 
$\bbY_{a,a'}=d$.  By inspection of \eqref{redun} we see that $\langle
t| \bbY_{i,i-k} \propto \langle t-k+2|$, hence $\bbY_{i,i-k}$ ``goes
back in time" by $k-2$ units.

\subsection{The positivity theorems for $\{T_{\al,j,k}\}$}

We now write $T_{1,j,k}$ as the partition function for paths on
$\Gamma_\bM$. The values of the skeleton weights are determined by
considering the effect of a mutation on the seed data -- They are
determined by a recursion relation, which can be solved explicitly.

\subsubsection{Transfer matrices and mutations}

As we illustrated in \cite{DFK3}, any Motzkin path has a unique
expression as a sequence of forward mutations, $m_\beta \mapsto
m'_\beta=m_\beta+\delta_{\beta,\alpha}$ where $\bM=(m_1,...,m_r)$ and 
$\bM'=(m_1',...,m_r')$. We restrict the mutations to those which increase $m_\al$ by $+1$
only in the following two cases:
\begin{itemize}
\item Case (i): $m_{\al-1}=m_\al=m_{\al+1}-1$,
\item Case (ii): $m_{\al-1}=m_\al=m_{\al+1}$,
\end{itemize}
(together with their boundary versions). This restricted set of mutations is sufficient 
to construct all Motzkin paths in the fundamental domain.

The initial step in the induction is the Motzkin path $\bM_0$.  The
path interpretation on $\Gamma_{\bM_0}={\wG}_r$ was given in Section
\ref{firpositsec}. The operator transfer matrix $\bbT_{\bM_0}=\bbT$
and the operator generating function $\bbF_{\bM_0}=\bbF$ are expressed
entirely in terms of the $d$ operator and the skeleton weights
$\bbY_\al(\bM_0)=\bbY_\al$ \eqref{skelweight}.

The inductive step is as follows. Given $\Gamma_\bM$ and its
operator weights, consider a forward mutation $\mu_\al$ or
$\mu_{\overline \al}$: $\bM\mapsto \bM'$. 
These have associated transfer matrices
$\bbT_\bM$ and $\bbT_{\bM'}$ corresponding to the graphs $\Gamma_\bM$
and $\Gamma_{\bM'}$.  We compare the associated generating
functions $\bbF_\bM=(I-\bbT_\bM)^{-1}_{0,0}$ and
$\bbF_{\bM'}=(I-\bbT_{\bM'})^{-1}_{0,0}$ using the row reduction
process. Both are operator continued fractions, which differ locally
due to the struction of the graphs. We find that the two operator
continued fractions are equal to each other if and only if the weights
of the graph $\Gamma_{\bM'}$ are related to those of $\Gamma_{\bM}$ as
follows.

\begin{thm}\label{weightrecu}
Let $\bbY'=\bbY(\bM')$ and $\bbY=\bbY(\bM)$, where
$\bM'=\mu_\al(\bM)$ or $\mu_{\overline{\al}}(\bM)$.
If $\al \neq 1$, then,
\begin{itemize}
\item{\bf Case (i): }
\begin{eqnarray}\label{weightncone}
\bbY_{2\al-1}'&=& \bbY_{2\al-1}+\bbY_{2\al}\nonumber \\
\bbY_{2\al}'&=& \bbY_{2\al+1} \bbY_{2\al} (\bbY_{2\al-1}')^{-1}\\
\bbY_{2\al+1}'&=&d^{-1}\bbY_{2\al+1}\bbY_{2\al-1} (\bbY_{2\al-1}')^{-1}d
\nonumber 
\end{eqnarray}
\item{\bf Case (ii): } in addition to the previous, we have
\begin{equation}\label{weightnctwo}
\bbY_{2\al+2}'=d^{-1}\bbY_{2\al+2}\bbY_{2\al-1} (\bbY_{2\al-1}')^{-1}d, \quad 
{\rm and}\ \ \bbY_\beta'=d^{-1}\bbY_\beta d,\ \forall \beta\geq 2\al+3
\end{equation}
\end{itemize}
If $\al=1$, we simply have to substitute $\bbY_1\to d^{-1}\bbY_1 d$ in the 
above formulas.
\end{thm}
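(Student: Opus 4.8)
The plan is to establish the identity $\bbF_\bM=\bbF_{\bM'}$ as operator continued fractions, both of which generate the same solution $T_{1,j,k}$ through their matrix elements, and to read off the weight recursion by applying the operator rearrangement Lemmas \ref{rerootlemma} and \ref{reargtlemma} locally. Since a single forward mutation $\mu_\al$ (or $\mu_{\overline\al}$) modifies the Motzkin path only in positions $\al-1,\al,\al+1$, the graphs $\Gamma_\bM$ and $\Gamma_{\bM'}$ differ only in a neighborhood of height $\al$. Consequently the two row-reduced continued fractions agree in all the nested factors built from weights $\bbY_\beta$ with $\beta<2\al-1$, so the weights below the mutated level are left untouched, and I would only need to analyze the sub-fraction $\bbF_\al$ that collects the weights of index $\geq 2\al-1$.

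First I would write out $\bbF_\al^\bM=(1-d\bbY_{2\al-1}-d\bbF_{\al+1}^\bM\bbY_{2\al})^{-1}$ with $\bbF_{\al+1}^\bM=(1-d\bbY_{2\al+1}-d\bbF_{\al+2}^\bM\bbY_{2\al+2})^{-1}$, and match this against the left-hand side of Lemma \ref{reargtlemma}, identifying the abstract operators $\bbA,\bbB,\bbC,\bbU$ with concrete combinations of skeleton weights and the deeper fraction $\bbF_{\al+2}$. The output operators $\bbA'=\bbA+\bbB$, $\bbB'=\bbC\bbB(\bbA+\bbB)^{-1}$, $\bbC'=d^{-1}\bbC\bbA(\bbA+\bbB)^{-1}d$ of the lemma would then be recognized as the continued fraction of the mutated graph. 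Comparing with $\bbF_\al^{\bM'}=(1-d\bbY'_{2\al-1}-d\bbF_{\al+1}^{\bM'}\bbY'_{2\al})^{-1}$ yields the three formulas of Case (i): the sum $\bbY'_{2\al-1}=\bbY_{2\al-1}+\bbY_{2\al}$ (which is $\bbA'$), the product $\bbY'_{2\al}=\bbY_{2\al+1}\bbY_{2\al}(\bbY'_{2\al-1})^{-1}$ (which is $\bbB'$), and the conjugated product for $\bbY'_{2\al+1}$ (which is $\bbC'$). The $d$-conjugations appearing in the theorem are forced precisely by the factor $d^{-1}(\cdots)d$ in Lemma \ref{reargtlemma}, reflecting that a rearranged level-step must be re-indexed in time.

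For Case (ii) I would repeat the analysis with the mutation producing a peak instead of a slope, so that the local piece of $\Gamma_{\bM'}$ acquires a descending edge; the rearrangement then propagates one level further, giving the additional transformation $\bbY'_{2\al+2}=d^{-1}\bbY_{2\al+2}\bbY_{2\al-1}(\bbY'_{2\al-1})^{-1}d$ together with a pure time-shift $\bbY'_\beta=d^{-1}\bbY_\beta d$ for all $\beta\geq 2\al+3$, recording that everything above the peak is displaced by two time units. For the boundary value $\al=1$, the bottom of the fraction involves the origin vertex $0$ rather than a level loop, so $\bbF_0=(1-d\bbF_1\bbY_1)^{-1}$ must first be cast into rerooted form $\bbF_0=1+d\bbF'_0\bbY_1$ via Lemma \ref{rerootlemma} before Lemma \ref{reargtlemma} applies; this rerooting conjugates the down-weight, which is exactly the prescribed substitution $\bbY_1\to d^{-1}\bbY_1 d$. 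I would finally verify the whole scheme on the fully worked $A_2$ example of Appendix \ref{atwo} as a consistency check.

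The step I expect to be the main obstacle is the precise operator identification in Lemma \ref{reargtlemma}: because the weights do not commute and each carries a hidden time-shift through $d$, getting the ordering of $\bbC\bbB(\bbA+\bbB)^{-1}$ and the conjugation $d^{-1}(\cdots)d$ exactly right — and confirming that the invertibility hypothesis, here that $\bbY_{2\al-1}+\bbY_{2\al}$ is invertible, holds — is the delicate part. Everything else is bookkeeping of which skeleton edges lie below, at, or above the mutated height, together with the flat/up/down dictionary for the pieces $\Gamma_{\bM_i}$.
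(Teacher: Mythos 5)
Your proposal follows essentially the same route as the paper: the paper's (very terse) proof is precisely "Gaussian elimination" of $I-\bbT_\bM$ into the operator continued fraction, followed by local application of Lemma \ref{reargtlemma} to the sub-fraction carrying the weights $\bbY_\beta$ with $\beta\geq 2\al-1$ (with the identifications $\bbA=\bbY_{2\al-1}$, $\bbB=\bbY_{2\al}$, $\bbC=\bbY_{2\al+1}$, etc., exactly as carried out in the $A_2$ appendix), and the $\al=1$ case handled by first rerooting via Lemma \ref{rerootlemma}, which is what induces $\bbY_1\to d^{-1}\bbY_1 d$. Your identification of the lemma outputs $\bbA',\bbB',\bbC'$ with $\bbY'_{2\al-1},\bbY'_{2\al},\bbY'_{2\al+1}$ and your treatment of the two cases and of the time-shift conjugations match the paper's argument.
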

\begin{proof}
The proof is by Gaussian elimination as in \cite{DFK3}.
The case $\al=1$ is special, as it requires a rerooting of
the generating function. The transformation of weights must be applied
on $\bbF'_{\bM}=(I-\bbT_\bM)^{-1}_{1,1}$ as in \eqref{rerootone},
which induces the substitution $\bbY_1\to d^{-1}\bbY_1 d$.
\end{proof}

Let
\begin{equation}\label{deflamu}
\lambda_{\al,t,m}={T_{\al,t,m+1}\over T_{\al,t+1,m}},\qquad 
\mu_{\al,t,m}={T_{\al,t,m}\over T_{\al-1,t+1,m}}
\end{equation}
\begin{cor}\label{solweight}
The skeleton weights obeying the recursions of Theorem
\ref{weightrecu}, subject to the initial condition \eqref{skelweight}
are the operators $\bbY_\beta(\bM)$, acting as $\langle t|
\bbY_\beta(\bM)=y_\beta(\bM;t)\langle t+1|$, with:
\begin{eqnarray}\label{weightsol}
\ \ y_{2\al -1}(\bM;t)&=& 
{\lambda_{\al,t+m_\al-m_1-1,m_\al}\over \lambda_{\al-1,t+m_{\al-1}-
m_1,m_{\al-1}}} \\
y_{2\al}(\bM;t)&=& 
{\mu_{\al+1,t+m_\al-m_1,m_\al +1}\over \mu_{\al,t+m_\al-m_1,m_\al}}\times 
\left\{ \begin{matrix}
{\lambda_{\al+1,t+m_{\al+1}-m_1,m_{\al+1}}\over \lambda_{\al+1,t+m_{\al}-
m_1,m_{\al}}}
 & {\rm if} \ m_{\al}=m_{\al+1}+1 \\
1  & {\rm otherwise} \end{matrix} \right\} \\
&&\times \left\{ \begin{matrix}
{\lambda_{\al-1,t+m_{\al}-m_1,m_{\al}}\over \lambda_{\al-1,t+m_{\al-1}-
m_1,m_{\al-1}}}
 & {\rm if} \ m_{\al}=m_{\al-1}-1 \\
1  & {\rm otherwise}
\end{matrix} \right\}
\end{eqnarray}
\end{cor}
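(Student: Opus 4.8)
The plan is to solve the recursion of Theorem \ref{weightrecu} explicitly by verifying that the proposed closed forms \eqref{weightsol} satisfy both the initial condition \eqref{skelweight} and the mutation recursions, using the uniqueness guaranteed by the recursive structure. Since Theorem \ref{weightrecu} determines the weights $\bbY_\beta(\bM')$ uniquely from $\bbY_\beta(\bM)$, and any Motzkin path $\bM$ in the fundamental domain is reached from $\bM_0$ by a unique sequence of the forward mutations of Cases (i) and (ii), it suffices to check: (1) that at $\bM=\bM_0$ (where $\bm=(0,\ldots,0)$) the formulas \eqref{weightsol} reduce to the skeleton weights $y_\al(t,0)$ of \eqref{timeyT}, via \eqref{skelweight}; and (2) that the formulas \eqref{weightsol}, as functions of the components $m_\beta$, transform correctly under $m_\al \mapsto m_\al+1$ according to \eqref{weightncone} and \eqref{weightnctwo}.

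First I would establish the base case. At $\bM_0$ all $m_\beta=0$, so the quantities \eqref{deflamu} become $\lambda_{\al,t,0}=T_{\al,t,1}/T_{\al,t+1,0}$ and $\mu_{\al,t,0}=T_{\al,t,0}/T_{\al-1,t+1,0}$, and I would substitute these into \eqref{weightsol} and check term by term that the result matches $y_{2\al-1}(t,0)$, $y_{2\al}(t,0)$, $y_1(t,0)$ as written in \eqref{timeyT}. This is a direct computation using $T_{\al,j,k}$ evaluated only at $k\in\{0,1\}$, with all the conditional factors (the $m_\al=m_{\al\pm1}\pm1$ cases) trivially equal to $1$ since $\bm$ is constant.

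Next, and this is the substantive part, I would verify the inductive step. Fix a mutation of Case (i) or (ii) at position $\al$, so that $m_\al\mapsto m_\al+1$ while all other components are unchanged, and the local configuration is as specified. I would substitute the closed forms \eqref{weightsol} for both $\bM$ and $\bM'$ into each of the three relations of \eqref{weightncone} (and the two further relations of \eqref{weightnctwo} in Case (ii)), and verify the identities. The key algebraic inputs are the $T$-system relations \eqref{Tsys}, which relate $T_{\al,j,m_\al+2}$ to lower-index data and are precisely what a mutation at $\al$ encodes, together with the bookkeeping of the time-shift arguments $t+m_\beta-m_1$: incrementing $m_\al$ shifts the arguments of $\lambda_{\al,\cdot}$ and $\mu_{\al,\cdot}$ and turns on or off the conditional factors depending on whether the local profile $(m_{\al-1},m_\al,m_{\al+1})$ crosses one of the thresholds $m_\al=m_{\al\pm1}\pm1$. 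The conjugation by $d$ appearing in $\bbY_{2\al+1}'$ and in \eqref{weightnctwo} corresponds exactly to the uniform time-shift $t\mapsto t$ of the untouched weights, which is consistent with the explicit $t$-dependence in \eqref{weightsol}; I would check that the operator relation $\langle t|d^{-1}\bbY_\beta d = y_\beta(\bM;t)\langle t+1|$ with the correct shifted argument reproduces $y_\beta(\bM';t)$.

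The hard part will be the careful tracking of the conditional factors and the time-shift arguments across the threshold-crossing of the mutation: when $m_\al$ increments, the local slope data $(m_\al-m_{\al\pm1})$ changes, so a factor that was $1$ in $\bM$ may become a nontrivial ratio in $\bM'$ (or vice versa), and one must confirm that the $T$-system relation supplies exactly the missing factor. Once the base case and the inductive step are established, the corollary follows immediately, since by the uniqueness in Theorem \ref{weightrecu} the operators $\bbY_\beta(\bM)$ defined by \eqref{weightsol} are the unique solution of the recursion with the given initial data, and hence coincide with the skeleton weights of $\Gamma_\bM$.
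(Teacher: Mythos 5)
Your proposal is correct and follows essentially the same route as the paper, whose entire proof is the one-line statement that the result follows ``by direct check of the recursion relations (\ref{weightncone}--\ref{weightnctwo})''; you have simply made explicit the base-case verification against \eqref{skelweight}, the inductive verification of the mutation recursions using the $T$-system relations, and the appeal to uniqueness that the paper leaves implicit.
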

\begin{proof}
By direct check of the recursion relations
(\ref{weightncone}-\ref{weightnctwo}).
\end{proof}

Thus, we have two expressions for the generating function of
$T_{1,j,k}$, one in terms of the seed data $\bx_\bM$ and the other in
terms of the seed data $\bx_{\bM'}$. We call the transition between the two
expressions a mutation: It acts on the graph $\Gamma_{\bM}$ and on its
weights. Alternatively, it acts on the operator continued fraction
expresson for $\bbF_\bM$ as a rearrangement.

\subsubsection{Positivity of $T_{1,j,k}$}

We note that the weights \eqref{weightsol} $y_\al(\bM;t)$ are
positive Laurent monomials of the initial data at $\bx_\bM$.  We
therefore have a positivity result:

\begin{thm}\label{pathone}
$T_{1,j,k+m_1}/T_{1,j+k,m_1}$ is the partition function for paths on
the rooted graph $\Gamma_\bM$ with the weights of Theorem
\ref{solweight}, starting from the root at time $j-k$ and ending at the root at 
time $j+k$. 
As such it is a positive Laurent polynomial of the
mutated data at $\bx_\bM$.
\end{thm}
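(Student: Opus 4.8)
The plan is to reduce Theorem \ref{pathone} to the data already assembled in Theorems \ref{pathrepsthm} and \ref{weightrecu} together with Corollary \ref{solweight}, so that almost nothing new needs to be computed. First I would recall that Theorem \ref{pathrepsthm}, in its operator form from Section \ref{pathcfrac}, establishes the base case: for the fundamental Motzkin path $\bM_0$ (with $m_1 = 0$) we have $T_{1,j,k} = T_{1,j+k,0}\,\langle j-k\vert \bbF_{\bM_0}\vert j+k\rangle$, where $\bbF_{\bM_0}=\bbF$ is the operator continued fraction \eqref{opcont}, which is manifestly the path partition function on $\Gamma_{\bM_0}=\wG_r$ rooted at vertex $0$. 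The identity $\bbF=((I-\bbT)^{-1})_{0,0}$ together with \eqref{recunZ} makes precise that $\langle j-k\vert \bbF\vert j+k\rangle = \cZ^{0,0}_{j-k,j+k}$, the sum over all paths from the root at time $j-k$ to the root at time $j+k$. This is exactly the claimed statement for $\bM_0$.

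Next I would run the induction on Motzkin paths described in Section \ref{mutandis}, using the restricted forward mutations of cases (i) and (ii) which, as noted, suffice to reach every $\bM$ in the fundamental domain. The inductive hypothesis is that $T_{1,j,k+m_1}/T_{1,j+k,m_1} = \langle j-k+m_1\vert \bbF_\bM\vert j+k-m_1\rangle$ equals the path partition function on $\Gamma_\bM$ with the weights $y_\beta(\bM;t)$ of Corollary \ref{solweight}. The key point is that a single forward mutation $\mu_\al$ (or $\mu_{\overline\al}$) changes $\bbF_\bM$ into $\bbF_{\bM'}$ by exactly the local continued-fraction rearrangements of Lemmas \ref{rerootlemma} and \ref{reargtlemma}: Lemma \ref{reargtlemma} is applied to the sub-fraction involving the weights $\bbY_\beta(\bM)$ with $\beta\geq 2\al-1$, and when $\al=1$ one first reroots via Lemma \ref{rerootlemma} as in \eqref{rerootone}. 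Theorem \ref{weightrecu} records precisely the induced transformation of the skeleton weights, and Corollary \ref{solweight} verifies that the solution of these recursions is the explicit positive Laurent monomial in the seed data $\bx_\bM$. Since the rearrangement lemmas are identities of operators (equivalently of matrix elements of Laurent series in $d$), the value $\langle j-k+m_1\vert\bbF_\bM\vert j+k-m_1\rangle$ is unchanged along the mutation, so it continues to equal $T_{1,j,k+m_1}/T_{1,j+k,m_1}$; only its \emph{presentation} as a function of the seed changes. Geometrically, the rearrangement is the passage from paths on $\Gamma_\bM$ to paths on $\Gamma_{\bM'}$, so the path-partition-function interpretation is preserved.

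The final point is positivity: by Corollary \ref{solweight} every skeleton weight $y_\beta(\bM;t)$, and hence (via \eqref{redun}) every edge weight of $\Gamma_\bM$, is a positive Laurent monomial in the variables $\bx_\bM$. A path partition function is a sum of products of such monomials with nonnegative integer multiplicities, so $T_{1,j,k+m_1}/T_{1,j+k,m_1}$ is a positive Laurent polynomial in $\bx_\bM$, and multiplying by the monomial prefactor $T_{1,j+k,m_1}$ preserves this. I expect the main obstacle to be the bookkeeping in the inductive step: one must check that the weights $\bbY_\beta$ \emph{not} touched by the rearrangement are correctly conjugated by the shift $d$ (the clauses $\bbY'_\beta=d^{-1}\bbY_\beta d$ for $\beta\geq 2\al+3$ in \eqref{weightnctwo}), and that the time-shift convention $\langle t\vert\bbY_{i,i-k}\propto\langle t-k+2\vert$ from \eqref{redun} is consistent with the starting and ending times $j-k+m_1$ and $j+k-m_1$ after each mutation. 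Verifying that the local graph surgery $\Gamma_\bM\to\Gamma_{\bM'}$ matches the operator rearrangement in both cases (i) and (ii), including the boundary versions and the special rerooting at $\al=1$, is where the real work lies; the worked $A_2$ example in Appendix \ref{atwo} serves as the template for this verification.
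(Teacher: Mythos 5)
Your proposal follows essentially the same route as the paper: Theorem \ref{pathone} is stated there as an immediate consequence of the inductive mutation/rearrangement construction (base case Theorem \ref{pathrepsthm}, inductive step via Lemmas \ref{rerootlemma} and \ref{reargtlemma} and Theorem \ref{weightrecu}, explicit weights from Corollary \ref{solweight}) together with the observation that those weights are positive Laurent monomials in $\bx_\bM$ --- which is exactly the argument you assemble. One small indexing slip: for $T_{1,j,k+m_1}/T_{1,j+k,m_1}$ the relevant matrix element is $\langle j-k\vert\bbF_\bM\vert j+k\rangle$ (consistent with the stated times $j-k$ and $j+k$); the form $\langle j-k+m_1\vert\cdots\vert j+k-m_1\rangle$ you quote corresponds to $T_{1,j,k}$ before the shift $k\to k+m_1$.
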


\subsubsection{General solution and strongly non-intersecting paths}

We now turn to the expression of $T_{\al,j,k}$ in terms of the mutated
initial data $\bx_\bM$.  We will interpret the
determinant formula Theorem \ref{elim} for $T_{\al,j,k}$ \`a la
Gessel-Viennot, in terms of the strongly non-intersecting paths on
the graph $\Gamma_\bM$ introduced in \cite{DFK3}.

\begin{figure}
\centering
\includegraphics[width=8.cm]{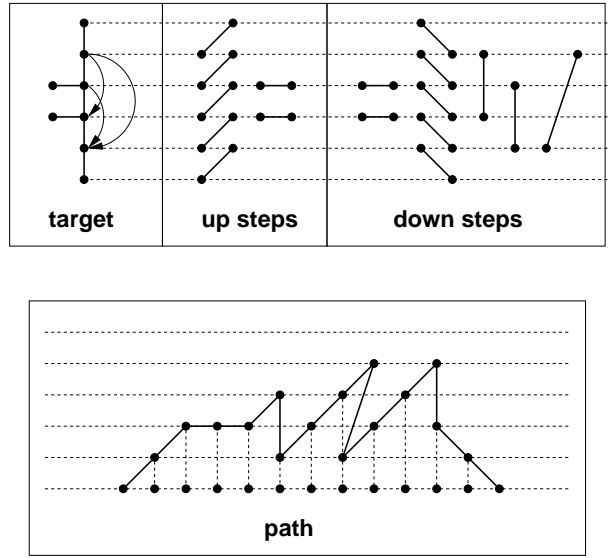}
\caption{\small Two-dimensional lattice path representation
on the graph $\Gamma_\bM$, $\bM=(2,1,0)$ of the $A_3$ case.
We have indicated the ``up" steps (i.e. away from the root) and the ``down" 
steps
(towards the root).}
\label{fig:pathslope}
\end{figure}

Let us briefly recall the two-dimensional $\Gamma_\bM$-lattice paths 
used to represent paths on
$\Gamma_\bM$, given in \cite{DFK3}. There are fundamentally three
kinds of oriented edges in $\Gamma_\bM$: the horizontal and vertical 
``skeleton" edges,
and down-pointing long edges, with weights which depend on the skeleton weights. 
The steps taken along these edges on $\Gamma_\bM$ are represented in $
\Z^2$
as follows (see Fig.\ref{fig:pathslope} for an illustration):
\begin{itemize}
\item A skeleton step $i\to i+\epsilon$, $\epsilon=\pm 1$, 
at time $t$ becomes the segment from $(t,i)$ to $(t+1,i+\epsilon)$
\item A skeleton step $i\to i'$ or $i'\to i$ at time $t$ becomes the
segment from $(t,i)$ to $(t+1,i)$
\item A long step $j\to i$, $j>i+1$ at time $t$ becomes the segment from $(t,j)$ to $(t+j-i-2,i)$
\end{itemize}
Note that the increment of $x$-coordinate for each step coincides with the 
time shift we have associated with each step. Indeed, all steps advance by 
one unit
of time, except the long ones, which go back in time by $j-i-2\leq 0$.
The only difference with \cite{DFK3} is that we now attach 
{\em time-dependent} 
weights to the steps namely a weight $y_{a,b}(t)$ for a step $a\to b$ starting at 
time $t$
(In the operator language, we have operator weights $\bbY_{a,b}$ that act as
$\langle t| \bbY_{a,b}= y_{a,b}(t) \langle t+h |$, where $h$ is the time-shift of 
the 
corresponding step, $h=+1$ for all steps except the long ones, for which
$h=a-b-2$.). We conclude that this representation is perfectly adapted to our 
weighted paths, as the $x$-coordinate is nothing but the time-coordinate.

Let us consider $T_{\al,j,k}$ as a function of the initial data at $\bx_\bM$.
Writing 
\begin{equation}\label{deterpath}
{T_{\al,j,k+m_1}\over \prod_{b=1}^\al T_{1,j+k+2b-\al,m_1} } 
=\det_{1\leq a,b \leq \al} 
{T_{1,j-a+b,k+a+b-\al-1+m_1}\over T_{1,j+k+2b-\al-1,m_1} }
\end{equation}
Using Theorem \ref{pathone},
we may interpret $T_{1,j-a+b,k+a+b-\al-1+m_1}/T_{1,j+k+2b-\al-1,m_1}$
as the partition function for $\Gamma_\bM$-lattice paths from $s_a=(j-k+\al
+1-2a,0)$ to 
$e_b=(j+k+2b-\al-1,0)$. The determinant is simply a signed sum of products of 
such
path partition functions, 
corresponding in turn to the partition function for families of 
paths starting at $\{s_a\}_{a=1}^\al$
and ending at $\{e_b\}_{b=1}^\al$, with the usual weights 
times the signature  of 
the permutation of endpoints induced by the configuration.

\begin{figure}
\centering
\includegraphics[width=12.cm]{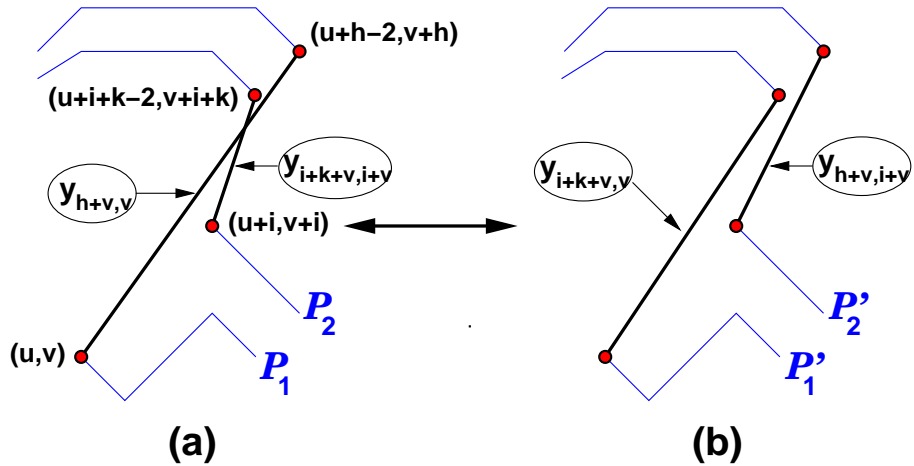}
\caption{\small A typical edge intersection of $\Gamma_\bM$-paths
(a) and the result of the flipping operation on it (b). We have indicated the
weights of the steps. 
The paths in (b)
are said to be ``too close'' to each other.}\label{fig:flipping}
\end{figure}

In the standard Gessel-Viennot case, these signs produce the necessary 
cancellations
to only leave us with the contribution of non-intersecting paths, namely families 
in which no
two paths share a vertex. This is best proved by introducing a sign-reversing 
involution that pairs up
and cancels all the unwanted terms in the expansion of the determinant. 

In the case of 
$\Gamma_\bM$-paths, the situation is more subtle, as paths may intersect 
without sharing a vertex.
In \cite{DFK3}, we have produced an involution, which allows to interpret an 
analogue of the determinant
\eqref{deterpath} as the partition function of strongly non-intersecting $
\Gamma_\bM$-lattice paths.
This involution consists in flipping paths as follows. We consider the first 
intersection between two
paths within a family. If the intersection is at a common vertex, we interchange 
the portions of paths
before the intersection. If it is not at a common vertex, we flip the two paths 
as indicated in Fig.\ref{fig:flipping}, by switching their beginnings until the crossing.

We make then the following crucial observation:
\begin{lemma}\label{crucialem}
In the generic flipping situation of Fig.\ref{fig:flipping},
the flipped pair of paths has the same (time-dependent) weight
as the original one, up to the sign of the permutation of starting points, 
due to the following relation:
$$
y_{h+v,v}(u+h-2)y_{i+k+v,i+v}(u+i+k-2)= 
y_{h+v,i+v}(u+h-2)y_{i+k+v,v}(u+i+k-2)
$$
\end{lemma}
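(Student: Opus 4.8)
The plan is to verify the claimed identity
$$
y_{h+v,v}(u+h-2)\,y_{i+k+v,i+v}(u+i+k-2)=
y_{h+v,i+v}(u+h-2)\,y_{i+k+v,v}(u+i+k-2)
$$
by direct substitution of the explicit skeleton weights from Corollary \ref{solweight}, together with the product formula \eqref{redun} for the long (descending) edges, and to show that after cancellation both sides reduce to the same monomial in the initial data $\bx_\bM$. The four weights appearing here are all weights of descending edges $a\to b$ with $a>b$, so each is a product of skeleton weights of the form $\bbY_{c+1,c}(\bbY_{c,c'})^{-1}(\bbY_{c',c})^{-1}$ along the spine between the two heights, times a terminal skeleton weight. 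The key structural point is that the edge $h+v\to v$ and the edge $i+k+v\to i+v$ traverse overlapping or disjoint ranges of spine heights, and when one instead forms the crossed pair $h+v\to i+v$ and $i+k+v\to v$, the two descents together visit exactly the same multiset of skeleton edges as the original pair. Hence the equality should follow from a rearrangement of the \emph{same} factors, once one tracks the time arguments carefully.

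First I would write out each of the four long-edge weights as an operator using \eqref{redun}, reading off from $\langle t|\bbY_{a,b}\propto\langle t-(a-b)+2|$ that a descending step $a\to b$ shifts time by $-(a-b)+2=b-a+2$. This fixes the time increments and explains the offsets $u+h-2$, $u+i+k-2$ in the statement: the two paths start at the same spatial/time origin determined by $u$ and $v$, and the shift by $h-2$ (resp.\ $i+k-2$) is precisely the time advance accumulated up to the point where the long step is taken. Next I would substitute the explicit scalar weights $y_{2\al-1}(\bM;t)$ and $y_{2\al}(\bM;t)$ from \eqref{weightsol}, expressed through the ratios $\lambda_{\al,t,m}$ and $\mu_{\al,t,m}$ of \eqref{deflamu}. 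The identity then becomes a statement that a product of $\lambda$'s and $\mu$'s, evaluated at shifted time arguments, is invariant under the recombination of endpoints.

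The main obstacle will be bookkeeping of the time arguments rather than any deep algebra. Because the long steps ``go back in time'', the evaluation points of the factors in the two descents are staggered, and the crossed pairing redistributes which factor is evaluated at which shifted time. I expect that the productform \eqref{redun} makes each descent a telescoping chain, so that the heights between $v$ and $h+v$ (and likewise between $i+v$ and $i+k+v$) contribute matching factors on both sides; the only genuine content is that the two ``free'' endpoints are swapped, and that the time shift of each retained factor depends only on the starting height of the corresponding step, which is unchanged under the flip. Concretely I would argue that the quantity
$$
y_{a,b}(t)\quad\text{depends on $t$ only through the combination dictated by}\quad
\langle t|\bbY_{a,b}\propto\langle t+(b-a+2)|,
$$
so that replacing the endpoint pair $(v,\,i+v)$ by $(i+v,\,v)$ while keeping the starting heights $h+v$ and $i+k+v$ and their respective start-times $u+h-2$, $u+i+k-2$ fixed, permutes the factors but not their values. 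Collecting terms then gives the same monomial on both sides, completing the verification; the final step is simply to observe that this is exactly the weight-preservation needed for the flipping involution on $\Gamma_\bM$-lattice paths to be weight-preserving up to the sign of the induced permutation of starting points.
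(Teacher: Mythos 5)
Your proposal is correct and follows essentially the same route as the paper, whose entire proof is ``by direct application of the formula \eqref{redun} for the long edge weights.'' You have in fact supplied more detail than the paper does, and you correctly isolate the two points that make the computation work: the crossed pairing of endpoints covers the same multiset of skeleton edges, and the time argument of the factor contributed at each intermediate height depends only on the starting height and starting time of the descent, both of which are unchanged by the flip.
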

\begin{proof} By direct application of the formula \eqref{redun} for the long edge weights.
\end{proof}

The only invariant families under this involution are those where the paths do 
not lie 
``too close" to each-other,
as otherwise they get cancelled by applying a flip.

Therefore all the conclusions of \cite{DFK3} still hold in the
present case, and we have:

\begin{thm}
$T_{\al,j,k+m_1}/\prod_{b=1}^\al T_{\al,j+k+2b-\al-1,m_1}$ is the partition 
function for
configurations of $\al$ strongly non-intersecting $\Gamma_\bM$-lattice paths, 
with the weights of Theorem \ref{solweight}. As
such, $T_{\al,j,k+m_1}$ is a positive Laurent polynomial of the mutated data at
$\bx_\bM$.
\end{thm}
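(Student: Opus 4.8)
The plan is to read the determinant formula of Theorem~\ref{elim}, transported to the shifted initial data $\bx_\bM$, through the single-path result of Theorem~\ref{pathone}, and then to run the Lindstr\"om--Gessel--Viennot involution in the generalized (``strongly non-intersecting'') form established in \cite{DFK3}. First I would start from the identity \eqref{deterpath}, which rewrites the normalized solution $T_{\al,j,k+m_1}/\prod_{b=1}^\al T_{1,j+k+2b-\al-1,m_1}$ as the $\al\times\al$ determinant whose $(a,b)$ entry is the ratio $T_{1,j-a+b,k+a+b-\al-1+m_1}/T_{1,j+k+2b-\al-1,m_1}$. By Theorem~\ref{pathone} each such entry is exactly the partition function of single $\Gamma_\bM$-lattice paths running from the starting point $s_a=(j-k+\al+1-2a,0)$ to the ending point $e_b=(j+k+2b-\al-1,0)$, with the positive Laurent weights $y_\beta(\bM;t)$ of Corollary~\ref{solweight}.

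Next I would expand the determinant as a signed sum over the symmetric group: it equals the weighted sum over all families of $\al$ paths, the $a$-th path joining $s_a$ to $e_{\pi(a)}$, each family carrying the factor $\sgn(\pi)$ of the induced permutation $\pi$ of endpoints. The heart of the argument is the sign-reversing involution. Following \cite{DFK3}, I would locate the first crossing of two paths within a family. If the two paths share a common vertex there, I interchange their initial segments in the usual Gessel--Viennot manner; this swaps two endpoints and hence reverses $\sgn(\pi)$, so the two families cancel. The genuinely new feature here is that $\Gamma_\bM$-paths may cross \emph{without} sharing a vertex, because of the long descending (back-in-time) edges; at such a crossing I apply the flipping move of Figure~\ref{fig:flipping}, again swapping the beginnings of the two paths up to the crossing. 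Lemma~\ref{crucialem} is precisely what makes this legitimate: the flipped pair carries the \emph{same} time-dependent weight as the original pair (the two long-edge weights reorganize via \eqref{redun}), while the permutation of starting points changes sign. Thus every family that is not strongly non-intersecting is paired with one of opposite sign and equal weight, and all such contributions cancel.

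What remains after the cancellation are exactly the families of $\al$ strongly non-intersecting $\Gamma_\bM$-lattice paths---those that are not ``too close'' in the sense of Figure~\ref{fig:flipping}---each occurring with the identity permutation and hence positive sign. Summing their positive-monomial weights gives the asserted partition function. Since by Corollary~\ref{solweight} every skeleton weight $y_\beta(\bM;t)$, and hence every step weight including the long-edge weights \eqref{redun}, is a positive Laurent monomial in the seed data $\bx_\bM$, and since the prefactor $\prod_{b=1}^\al T_{1,j+k+2b-\al-1,m_1}$ is a positive monomial in $\bx_\bM$, the product $T_{\al,j,k+m_1}$ is a positive Laurent polynomial in the mutated data.

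I expect the main obstacle to be verifying that the flipping move is an honest involution in the present time-dependent setting, rather than a routine index check. Concretely, one must check that flipping at the first crossing again produces a first crossing to which the inverse flip applies (so that $\varphi\circ\varphi=\mathrm{id}$), that strongly non-intersecting families are exactly the fixed points, and---the truly new point---that the weight identity of Lemma~\ref{crucialem} holds for every relevant pair of long edges, so that the involution is weight-preserving and not merely sign-reversing. Once Lemma~\ref{crucialem} is in hand, all the combinatorial bookkeeping of \cite{DFK3} transfers verbatim, since the graphs $\Gamma_\bM$ are identical and only the scalar weights have been promoted to time-dependent ones. For completeness I would also note, as in the proof of Theorem~\ref{positheor}, that the small-$k$ cases where start times exceed end times are handled by truncating to a smaller $A_{r'}$ system and by the reflection $k\mapsto 1-k$, so that positivity holds for all $j,k\in\Z$.
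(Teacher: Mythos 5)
Your proposal is correct and follows essentially the same route as the paper: the determinant identity \eqref{deterpath}, the single-path interpretation via Theorem \ref{pathone}, and the sign-reversing flipping involution whose weight-preservation is exactly Lemma \ref{crucialem}, after which the surviving terms are the strongly non-intersecting families. Your closing remarks on verifying the involution property in the time-dependent setting and on the small-$k$ reduction are sensible refinements of points the paper delegates to \cite{DFK3} and to the proof of Theorem \ref{positheor}.
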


\section{Conclusion}

The $T$-system equations are a special case of a
{\em non-commutative} $Q$-system. That is, one can write a $Q$-system
for non-commutative variables such that its matrix elements coincide with
the $T$-system equations. One can think of the non-commutative $Q$-system
as a ``non-commutative" cluster algebra.
Special cases of non-commutative cluster algebras have been
considered in several contexts, for example the quantum cluster algebras of
Berenstein and Zelevinsky \cite{BZ}, or the more general recursion relation
introduced by Kontsevich \cite{KPC} (in rank 2), with similar Laurent properties,
and which can be solved in some special ``affine" cases
using our methods \cite{DFKnew}. The main idea is that the path formulation
seems to be particularly well adapted to the explicit solution of such problems,
and makes Laurentness and  positivity manifest. This will be discussed in a future
publication.

We should mention that there has been a great deal of interest in 
$T$-systems with more restrictive boundary conditions \cite{N,HL}. 
We hope that the
construction introduced in this paper will provide a simple way of
treating such boundary conditions and their consequences.

\begin{appendix}
\section{Discrete Wronskians}

\subsection{Pl\"ucker relations}

Let $P$ be an
$N\times (N+k)$-matrix. Let $|P^{b_1,...,b_k}|$ be the determinant of
the matrix obtained by deleting the $k$ columns $b_1,...,b_k$ of $P$, times 
the signature of the permutation that
reorders these column indices in increasing order. Then we have:
\begin{equation}\label{plucker}
\vert P^{a_1,...,a_k}\vert \,\vert P^{b_1,...,b_k}\vert=\sum_{p=1}^k
\vert P^{b_p,a_2...,a_k}\vert  
\, \vert P^{b_1,...,b_{p-1},a_1,b_{p+1},...,b_k}\vert .
\end{equation}
for any choice of $2k$ columns $a_1,...,a_k$ and $b_1,...,b_k$ of $P$.
In particular, when $k=2$, we have
\begin{equation}\label{specplu}
  \vert P^{a_1,a_2}\vert \, \vert P^{b_1,b_2}\vert =\vert P^{b_1,a_2}\vert \, 
  \vert P^{a_1,b_2}\vert+\vert P^{b_2,a_2}\vert \, \vert P^{b_1,a_1}\vert.
\end{equation}
for any $N\times(N+2)$ matrix $P$. 

Equation \eqref{specplu}  implies the Desnanot-Jacobi relation. Let $M$ be an 
$N\times N$ matrix, and let $|M|$, $M_{i}^{j}$, $|M_{i_1,i_2}^{j_1,j_2}|$ denote 
the determinants of $M$,
the minor obtained by erasing row $i$ and column $j$ of $M$, and the double 
minor obtained
by erasing rows $i_1,i_2$ and columns $j_1,j_2$ of $M$, respectively. Let 
$1\leq i_1<i_2\leq N$ and $1\leq j_1<j_2\leq N,$ then
\begin{equation}\label{desna}
|M| \, |M_{i_1,i_2}^{j_1,j_2}|= |M_{i_1}^{j_1}|\, |M_{i_2}^{j_2}| - |M_{i_1}^{j_2}|\, 
|M_{i_2}^{j_1}|
\end{equation}
It is easily obtained as a particular case of eq.\eqref{specplu}, for 
$a_2=j_1$, $b_2=j_2$, $P_{i,a_1}=\delta_{i,i_1}$,
$P_{i,b_1}=\delta_{i,i_2}$, and $M$ is the matrix $P$ with columns $a_1$ and 
$b_1$ erased.
Indeed, one checks directly that: $|P^{a_1,b_1}|=|M|$, $|P^{a_2,b_2}|=|
M_{1,N}^{1,N}|$,
$|P^{a_1,a_2}|=|M_{N}^{1}|$, $|P^{b_2,b_1}|=-|M_{1}^{N}|$, $|P^{a_1,b_2}|=|
M_{N}^{N}|$,
and $|P^{b_1,a_2}|=-|M_{1}^{1}|$.

\subsection{$T$-system as discrete Wronskians}\label{wronskianApp}
Here present the proof of Theorem \ref{elim} which uses the relations in the 
previous subsection.
\begin{proof}
Consider Equation \eqref{desna}. Let $N=\al+1$, $i_1=j_1=1$, $i_2=j_2=N$ 
and choose the matrix 
$M$ with entries $M_{a,b}=T_{1,j+a-b,k+a+b-\al-2}$
for $a,b=1,2,...,\al+1$. We denote by $W_{\al+1,j,k}=|M|$ the corresponding 
``discrete Wronskian" determinant. Substituting this definition into eq.
\eqref{desna}, we 
have
\begin{equation}\label{relaW}
W_{\al+1,j,k} W_{\al-1,j,k} =W_{\al,j,k-1}W_{\al,j,k+1} -W_{\al,j-1,k}W_{\al,j+1,k}
\end{equation}
valid for $\al\in I_r$, provided we set $W_{0,j,k}=1$.
Note that $W_{1,j,k}=T_{1,j,k}$ by definition. Comparing eq.\eqref{relaW}
with the $T$-system \eqref{Tsys}, we deduce that the  $T$'s and $W$'s obey 
the
same recursion relations and share the same initial conditions at $\al=0$ and 
$1$.
As the system is a three-term recursion in $\al$ this determines the solution 
uniquely
and therefore we have $W_{\al,j,k}=T_{\al,j,k}$ for all $\al\in I_r$,
$j,k\in \Z$.
\end{proof}

\subsection{Linear recursion relations}\label{linearrecursion}
Here, we present a proof of Theorem \ref{recut}, that the variables $T_{\al,j;k}$ 
satisfy linear recursion relations, with constant coefficients which are the 
conserved quantities.
\begin{proof}
We perform the discrete analog of differentiating the Wronskian, and compute
$\varphi_{j-1,k+1}-\varphi_{j,k}=0$. Denoting by 
$\varphi_{j,k}=|\bg_1, \bg_2, \cdots ,\bg_{r+1}|$ and $\varphi_{j-1,k+1}=|\vf_1, 
\vf_2, \cdots ,\vf_{r+1}|$
as the determinants of column vectors $\bg_i,\vf_i$, we note that $\bg_{i+1}=
\vf_i$
for $i=1,2,...,r$. We may therefore rewrite
\begin{equation}
\varphi_{j-1,k+1}-\varphi_{j,k}=0= | \vf_1, \vf_2, \cdots \vf_r,\vf_{r+1}-(-1)^{r} 
\bg_{1}|
\end{equation}
with $(\vf_b)_a = T_{1,j-1+a-b,k+a-r-1+b} $ and $(\bg_{1})_a=T_{1,j-1+a,k+a-
r-1}$.
Therefore, there must exist a non-trivial linear combination of the columns of 
the matrix which vanishes. We write it as
\begin{equation}
\sum_{b=1}^r (-1)^b c_{r+1-b}(j,k) \vf_b + c_0(j,k)\,(\vf_{r+1}-(-1)^r \bg_1) =0.
\end{equation}

Recall that the entries of the vectors $\vf_b$ depend on $j,k$ in a very
particular way, namely $(\vf_b(j,k))_{a+1} =(\vf_b(j+1,k+1))_{a}$, and similarly 
for $\bg_1$.
In order for the above linear combination to be non-trivial, we must therefore 
have
$c_b(j,k)=c_b(j-1,k-1)=\cdots=c_b(j-k,0)$ 
for all $j,k\in \Z$, hence the coefficients
$c_b$ only depend on the difference $j-k$. Finally, we may normalize the 
coefficients
in such a way that $c_0=1$ identically, and the first part of the Theorem 
follows.

The second part is treated analogously, by considering the difference of 
Wronskians
$\varphi_{j+1,k+1}-\varphi_{j,k}=0$
and reasoning on the rows of the corresponding matrices.
\end{proof}

\subsection{Conserved quantities as Wronskian determinants with defect}
\label{conservedwronskian}
Here, we give the proof of Lemma \ref{conwron} expressing the conserved 
quantities of the $T$-system as 
Wronskian determinants with defects.

\begin{proof}
Let  $\gamma_m(j,n)$ denote the right hand side of of eq.\eqref{consW}.

It is clear that that $\gamma_0(j,n)=T_{r+1,j+n,n+r}=1$ and $\gamma_{r+1}
(j,n)=T_{r+1,j+n-1,n+r+1}=1$
as consequences of Theorem \ref{elim} and of  the $A_r$ boundary condition. 

Let 
$p\in\Z$ and define the
 $(r+2)\times (r+2)$ matrix $D$  to be the matrix with entries
$D_{1,b}=T_{1,j+p+1-b,p+b-1}$, and $D_{a,b}=T_{1,j+n+a-b,n+a+b-2}$ for 
$a=2,3,...,r+2$ 
and $b=1,2,...,r+2$. The identity \eqref{linrecone} may be recast into a 
vanishing non-trivial linear
combination of the columns of $D$, with coefficients $c_{r+2-b}(j)(-1)^{b-1}$, 
$b=1,2,...,r+2$, hence the determinant
of $D$ vanishes. 

Expanding the determinant along the first row, we find that
\begin{equation}\label{reclin}
0=\det(D)=\sum_{b=1}^{r+2} (-1)^{b+1}D_{1,b} |D_1^b|=\sum_{b=1}^{r+2}
(-1)^{b-1}
\gamma_{r+2-b}(j,n) T_{1,j+p+1-b,p+b-1} ,
\end{equation}
as the determinants $\gamma_m(j,n)$
are  the minors $|P_{1}^{r+2-m}|$. 

Since  the Wronskian
determinant $T_{r+1,j,k}=1$ is non-zero, there exist no other non-trivial linear 
recursion relation
than Equation \eqref{linrecone} with strictly fewer terms, hence the coefficients 
in Equation \eqref{reclin}
must be proportional to those in Equation \eqref{linrecone}. As $c_0(j)=
\gamma_0(j,n)=1$,
we deduce that $\gamma_m(j,n)=c_m(j)$ for all $m=0,1,2...,r+1$, and the 
Lemma follows.
\end{proof}

\section{Example of $A_2$: rearrangements, graphs and paths}\label{atwo}

Here, we illustrate the program of Section \ref{mutandis} in the case $r=2$. We 
first
present the rearrangements of the operator continued fraction $F$,
which make positivity of $R_{1,n}$ manifest in all three cases. Next,
we interpret these in terms of partition functions for
operator-weighted paths on graphs, to illustrate Section \ref{pathposit}.

\subsubsection{Rearrangements}
The fundamental domain for the action of mutations on the fundamental seed 
$\wx_0$
is coded by the following three Motzkin paths with 2 vertices: $\bm_0=(0,0)$, 
$\bm_1=\mu_1(\bm_0)=(1,0)$ and $\bm_2=\mu_2(\bm_0)=(0,1)$. We give 
below the three
operator continued fractions corresponding to these points.

{\bf Seed $\bx_0$:} The continued fraction $F_0(\by)$
reads for the fundamental seed corresponding to
the Motzkim path $\bm_0$ is:
\begin{equation}
F_0(\by)=\left(1-d \Big(1-d \big(1-d \bbY_3-d(1-d \bbY_5)^{-1}
\bbY_4\big)^{-1}\bbY_2\Big)^{-1} \bbY_1\right)^{-1}
\end{equation}
with operators $\bbY_i$, $i\in I_5$, acting as  $\langle t|\bbY_i=y_i(t)\, \langle t
+1|$, and:
\begin{eqnarray*}
&&y_1(t)={T_{1,t,1}\over T_{1,t+1,0} },\quad 
y_2(t)={T_{2,t,1}\over T_{1,t,0}T_{1,t+1,1} },\quad
y_3(t)={T_{1,t+1,0}T_{2,t-1,1}\over T_{1,t,1}T_{2,t,0} } \\
&&y_4(t)={T_{1,t+1,0}\over T_{2,t,0}T_{2,t+1,1} },\quad 
y_5(t)={T_{2,t+1,0}\over T_{2,t,1} }
\end{eqnarray*}

{\bf Seed $\bx_2=\mu_2(\bx_0)$:} Following Section \ref{mutandis},
we apply the Lemma \ref{reargtlemma} to $F_0$, with 
$\bbA=\bbY_3$, $\bbB=\bbY_4$, $\bbC=\bbY_5$ and $\bbU=0$:
This yields $F_0(\by)=F_2(\bw)$, where
 
\begin{equation}
F_2(\bw)= \left(1-d\Bigg(1-d \Big(1-d \big(1-d(1-d 
\bbW_5)^{-1}\bbW_4 \big)^{-1}\bbW_3 \Big)^{-1}\bbW_2 \Bigg)^{-1}\bbW_1 
\right)^{-1}
\end{equation}
with operators $\bbW_i$, $i\in I_5$, acting as $\langle t|\bbW_i=w_i(t) \langle t
+1|$, with:
\begin{eqnarray*}
&&w_1(t)={T_{1,t,1}\over T_{1,t+1,0} },\quad 
w_2(t)={T_{2,t,1}\over T_{1,t,0}T_{1,t+1,1} },\quad
w_3(t)={T_{1,t+1,0}T_{2,t,2}\over T_{1,t,1}T_{2,t+1,1} } \\
&&w_4(t)={T_{1,t+1,1}\over T_{2,t,1}T_{2,t+1,2} },\quad 
w_5(t)={T_{2,t+1,1}\over T_{2,t,2} }
\end{eqnarray*}

To obtain this, we have written $\bbW_1=\bbY_1$, $\bbW_2=\bbY_2$, and $
\bbW_3=\bbY_3+\bbY_4$,
while $\bbW_4=\bbY_5 \bbY_4 \bbW_3^{-1}$ and $\bbW_5=d^{-1}\bbY_5 
\bbY_3 \bbW_3^{-1} d$,
and used the $T$-system to simplify the expressions.

{\bf Seed $\bx_1=\mu_1(\bx_0)$:} Following Section \ref{mutandis}, we first
apply the rerooting Lemma \ref{rerootlemma}, with $\bbA=\bbY_1$ and $\bbB=
\bbV \bbY_2$,
where 
$$
\bbV= \left( 1-d \bbY_3-d (1-d \bbY_5)^{-1} \bbY_4\right)^{-1}
$$
This allows to rewrite $F_0(\by)=1+d F_0'(\by) \bbY_1$, with
$F_0'(\by)=(1-\bbY_1 d-d \bbU  \bbY_2)^{-1}$. We may now apply the 
rearrangement Lemma
\ref{reargtlemma}, with $\bbA=d^{-1}\bbY_1 d$, $\bbB= \bbY_2$, $\bbC=
\bbY_3+(1-d \bbY_5)^{-1} \bbY_4$,
and $\bbU=0$: this yields $F_0(\by)=1+d F_1(\bz) \bbY_1$, where:
\begin{equation}
F_1(\bz)= \left(1-d\Bigg(1-d \Big(1-d\bbZ_3 -d (1-d 
\bbZ_5)^{-1}\bbZ_4 \Big)^{-1}\Big(\bbZ_2 +(1-d \bbZ_5 )^{-1} \bbZ_6 \Big) 
\Bigg)^{-1}\bbZ_1\right)^{-1}
\end{equation}
where $\bbZ_i$, $i\in I_5$ act as $\langle t|\bbZ_i=z_i(t) \langle t+1|$, with:
\begin{eqnarray*}
&&z_1(t)={T_{1,t,2}\over T_{1,t+1,1} },\quad 
z_2(t)={T_{2,t-1,1}T_{2,t+1,1}\over T_{1,t,1}T_{1,t+1,2}T_{2,t,0} },\quad
z_3(t)={T_{1,t+1,1}T_{2,t-2,1}\over T_{1,t,2}T_{2,t-1,0} } \\
&&z_4(t)={T_{1,t-1,1}T_{1,t+1,1}\over T_{2,t-1,0}T_{2,t,1}T_{1,t,2} },\quad 
z_5(t)={T_{2,t,0}\over T_{2,t-1,1} }
\end{eqnarray*}
and $\bbZ_6$ is a long step weight, expressed in terms of the skeleton weights as:
$\bbZ_6=\bbZ_4 (d \bbZ_3)^{-1} \bbZ_2$ (a particular case of Eq.\eqref{redun}). 
Note that it is diagonal, namely:
$$
\langle t | z_6 =z_6(t) \langle t |,\ \  {\rm where}\qquad z_6(t)={z_4(t)\over 
z_3(t)} z_2(t-1)=
{1\over T_{1,t,2}T_{2,t-1,0}}
$$
The above weights follow from the identifications: $\bbZ_1=d^{-1}\bbY_1 d+
\bbY_2$, 
$\bbZ_2=\bbY_3  \bbY_2 \bbZ_1^{-1}$, $\bbZ_6=d^{-1}\bbY_4 \bbY_2 
\bbZ_1^{-1}$,
$\bbZ_3=d^{-1}\bbY_3 d^{-1}\bbY_1d \bbZ_1^{-1}$, $\bbZ_4=d^{-1}\bbY_4 
d^{-1}\bbY_1d \bbZ_1^{-1}$ 
and $\bbZ_5=d^{-1}\bbY_5d$,
and the use of the $T$-system to simplify the expressions.

\subsubsection{Paths on graphs with operator weights}
\begin{figure}
\centering
\includegraphics[width=6.cm]{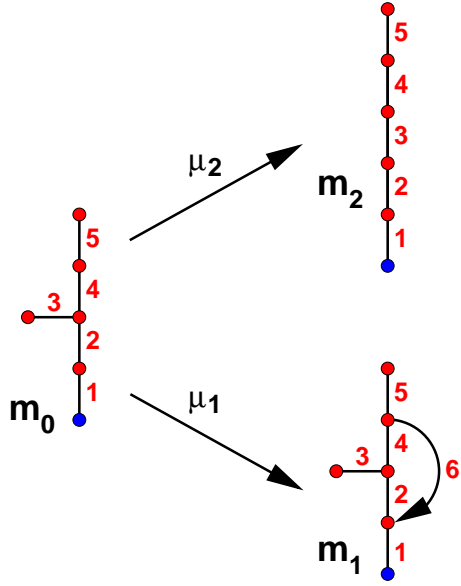}
\caption{The fundamental domain for $A_2$, coded by the Motzkin paths 
$\bm_0,\bm_1,\bm_2$, and the corresponding target graphs for the path 
interpretation,
with their edge labels.
Mutations are indicated by arrows.}\label{fig:pathslthree}
\end{figure}

Recall first that the
continued fractions $F_0(\by),F_2(\bw)$ are such that 
$T_{1,j,k}/T_{1,j+k,0}=\langle j-k | F_i | j+k\rangle$, $i=0,2$, while, due to the 
re-rooting, we
have $T_{1,j,k}/T_{1,j+k-1,1}=\langle j-k+1 | F_1 | j+k-1\rangle$. 

The three above operator continued fractions may be interpreted in terms of 
path counting
as follows. We have represented in Figure \ref{fig:pathslthree} the three rooted 
target graphs
$\Gamma_\bm$, attached to the three Motzkin paths
$\bm=\bm_0,\bm_1,\bm_2$, together with their edge labelings. 
We have the following 

\begin{thm}
For $i=0,1,2$,
the quantities $\langle t| F_i | t'\rangle$ are the partition functions for paths 
on the graphs $\Gamma_{\bm_i}$, from and to the root, starting at time $t$
and ending at time $t'$, and with operator weights defined as the product over
the operator weights for each successive step of the path, in the same order. 
The weights
are $d$ per step away form the root, and respectively $\bbY_i$, $\bbZ_i$ and 
$\bbW_i$
per step towards the root, along the edge labeled $i$.
\end{thm}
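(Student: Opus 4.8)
The plan is to use the standard dictionary between operator continued fractions and generating functions of paths, namely that for a rooted graph $\Gamma$ with operator-valued transfer matrix $\bbT$ the resolvent entry $\big((I-\bbT)^{-1}\big)_{0,0}$ enumerates closed paths at the root. First I would attach to each graph $\Gamma_{\bm_i}$ ($i=0,1,2$) its transfer matrix $\bbT_{\bm_i}$, assigning the weight $d$ to each step away from the root, the appropriate skeleton weight ($\bbY_\al$, $\bbZ_\al$, $\bbW_\al$ respectively) to each skeleton step towards the root, and the long-edge weight of \eqref{redun} to each descending long step. The theorem then splits into two independent claims: (A) $\langle t|\big((I-\bbT_{\bm_i})^{-1}\big)_{0,0}|t'\rangle$ is the path partition function on $\Gamma_{\bm_i}$ from root to root between times $t$ and $t'$, and (B) $F_i=\big((I-\bbT_{\bm_i})^{-1}\big)_{0,0}$.

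Claim (A) is formal and uniform in $i$. I would expand $(I-\bbT_{\bm_i})^{-1}=\sum_{n\geq 0}\bbT_{\bm_i}^n$ and note that $\big(\bbT_{\bm_i}^n\big)_{0,0}$ is, by definition of a matrix product, the sum over length-$n$ closed walks at the root of the ordered product of their step weights (only genuine edges contribute, since $\bbT_{a,b}$ vanishes off the adjacency). Because all operators act on $V^*$ from the left, as stressed in Section \ref{pathposit}, this ordered product is exactly the time-ordered weight $w(p)$ of the path, and sandwiching between $\langle t|$ and $|t'\rangle$ selects the walks running from time $t$ to time $t'$; this is precisely \eqref{recunZ} rewritten with operator weights. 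Hence (A) holds.

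For claim (B) I would carry out the row reduction of $I-\bbT_{\bm_i}$, equivalently the first-return-at-each-height decomposition of paths pictured in Figure \ref{fig:opacityofdope}. For $i=0$ this is exactly the computation \eqref{opcont} of Section \ref{pathcfrac}, so nothing new is needed. For $i=2$ the graph $\Gamma_{\bm_2}$ is a simple chain, so $\bbT_{\bm_2}$ is tridiagonal and its resolvent is the purely nested continued fraction $F_2(\bw)$ by the classical tridiagonal continued-fraction expansion. For $i=1$ the graph $\Gamma_{\bm_1}$ carries, in addition to the skeleton, a descending long edge; a first-return decomposition at height $1$ then produces the two contributions ``ordinary down-step'' and ``long down-step with intermediate excursions above height $1$'', i.e. the factor $\bbZ_2+(1-d\bbZ_5)^{-1}\bbZ_6$ appearing in $F_1(\bz)$, with the long-edge weight $\bbZ_6=\bbZ_4(d\bbZ_3)^{-1}\bbZ_2$ read off from \eqref{redun}.

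The main obstacle will be the long-edge case $i=1$: one must check that the time-shift encoded by the $d^{-1}$ in \eqref{redun} (making the long step ``go back in time'') together with the non-commutative ordering of the $\bbZ_\al$ combine so that the first-return decomposition reproduces exactly the operator $\bbZ_2+(1-d\bbZ_5)^{-1}\bbZ_6$ of $F_1$, including the fact that $\bbZ_6$ is diagonal. As a consistency check I would note that $F_1$ and $F_2$ were obtained from $F_0$ purely through the algebraic identities of Lemmas \ref{rerootlemma} and \ref{reargtlemma}, so all three are equal as generating functions of $T_{1,j,k}$; the content of the theorem is the separate \emph{combinatorial} interpretation of each continued fraction on its own graph, and (A)--(B) supply exactly that.
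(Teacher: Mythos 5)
Your proposal is correct and follows essentially the same route as the paper: the paper's proof likewise introduces the operator transfer matrices $\bbT_i$, identifies the path partition function with $\big((I-\bbT_i)^{-1}\big)_{0,0}$, and obtains each $F_i$ by Gaussian elimination of $I-\bbT_i$, exactly your claims (A) and (B). Your explicit attention to the long-edge weight $\bbZ_6$ in the $i=1$ case is a useful elaboration of a point the paper leaves implicit, but it is not a departure from its argument.
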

\begin{proof}
The proof is a straightforward adaptation of the argument of Section 
\ref{pathcfrac}: it uses
operator transfer matrices $\bbT_i$, and amounts to performing the Gaussian 
elimination
of $I-\bbT_i$, in order to compute $\left((I-\bbT_i)^{-1}\right)_{0,0}$, where $0$ 
indexes the root vertex on $\Gamma_{\bm_i}$.  We give explicit expressions 
below.
\end{proof}

We now list the transfer matrices for the three cases above. In all cases, we 
have
$F_i =\left((I-\bbT_i)^{-1}\right)_{0,0}$.

$$\small 
\bbT_0=\begin{pmatrix} 
0 & d & 0  & 0 & 0 & 0\\
\bbY_1 & 0 & d & 0 & 0 & 0\\
0 & \bbY_2 & 0 & d & 0 & 0\\
0 & 0 & \bbY_3 & 0 & d & d \\
0 & 0 & \bbY_4 & 0 & 0 & 0 \\
0 & 0 & 0 & 0 & \bbY_5 & 0
\end{pmatrix}
\bbT_1=\begin{pmatrix} 
0 & d & 0  & 0 & 0 & 0\\
\bbZ_1 & 0 & d & 0 & 0 & 0\\
0 & \bbZ_2 & 0 & d & 0 & 0\\
0 & 0 & \bbZ_3 & 0 & d & d \\
0 & \bbZ_6 & \bbZ_4 & 0 & 0 & 0 \\
0 & 0 & 0 & 0 & \bbZ_5 & 0
\end{pmatrix}
\bbT_2=\begin{pmatrix} 
0 & d & 0  & 0 & 0 & 0\\
\bbW_1 & 0 & d & 0 & 0 & 0\\
0 & \bbW_2 & 0 & d & 0 & 0\\
0 & 0 & \bbW_3 & 0 & d & 0 \\
0 & 0 & 0 & \bbW_4 & 0 & d \\
0 & 0 & 0 & 0 & \bbW_5 & 0
\end{pmatrix}
$$

\begin{remark}
Note that, as opposed to the two other cases,
the transfer matrix $\bbT_1$ is not made of diagonal operators times $d$, as
$\bbZ_6$ is diagonal, hence goes back one step in time compared to the other 
operators $\bbZ_i$, $i=1,2,...,5$. This necessity for the longer descending 
steps
to go back in time was already observed in \cite{DFK3} in the two-dimensional
representation of the $\Gamma_\bm$-paths.
\end{remark}

\end{appendix}

\def\cprime{$'$} \def\cprime{$'$} \def\cprime{$'$} \def\cprime{$'$}
  \def\cprime{$'$} \def\cprime{$'$} \def\cprime{$'$} \def\cprime{$'$}
  \def\cprime{$'$} \def\cprime{$'$}
\providecommand{\bysame}{\leavevmode\hbox to3em{\hrulefill}\thinspace}
\providecommand{\MR}{\relax\ifhmode\unskip\space\fi MR }
\providecommand{\MRhref}[2]{%
  \href{http://www.ams.org/mathscinet-getitem?mr=#1}{#2}
}
\providecommand{\href}[2]{#2}


\begin{thebibliography}{10}
\bibitem{BR} V. Bazhanov and N. Reshetikhin, {\em Restricted solid-on-solid 
models connected with simply-laced algebras and conformal field theory}. J. 
Phys. A: Math. Gen. {\bf 23} (1990) 1477--1492.

\bibitem{BFZ}A. Berenstein, S. Fomin,
A. Zelevinsky, \emph{Cluster algebras III:
Upper bounds and double Bruhat cells} 
Duke Math. J.   {\bf 126} (2005),  No. 1, 1--52.

\bibitem{BZ}A. Berenstein, A. Zelevinsky, \emph{Quantum Cluster Algebras},
Adv. Math. {\bf 195} (2005) 405--455. 
{\tt arXiv:math/0404446 [math.QA]}.

  
\bibitem{DFK2} P. Di Francesco and R. Kedem, \emph{Q-systems as
    cluster algebras II}. {\tt
    arXiv:0803.0362 [math.RT]}.
\bibitem{DFK3} P. Di Francesco and R. Kedem, \emph{Q-systems, heaps,
paths and cluster positivity}, preprint {\tt arXiv:0811.3027
[math.CO]}.
\bibitem{DFK4} P. Di Francesco and R. Kedem, {\em $Q$-systems cluster 
algebras, paths and total positivity}. Preprint {\tt arXiv:0906:3421 [math.CO]}.
\bibitem{DFKnew} P. Di Francesco and R. Kedem, {\em Discrete 
non-commutative integrability: the proof of a conjecture by M. Kontsevich}, to
appear.

\bibitem{FZ}
S. Fomin and A. Zelevinsky, \emph{Cluster algebras. {I}.{F}oundations}, 
J. Amer. Math. Soc. \textbf{15} (2002), no.~2, 497--529.



\bibitem{FZtotalposit} S. Fomin And A. Zelevinsky \emph{Double Bruhat cells 
and total positivity},
Jour. of the A.M.S. \textbf{12}, No 2 (1999), 335-380.



\bibitem{FR} E. Frenkel and N. Reshetikhin, {\em The
  $q$-characters of representations of quantum affine algebras and
  deformations of $W$-algebras}. In { Recent developments in quantum affine
  algebras and related topics (Raleigh, NC 1998)}, Contemp. Math. {\bf
  248} (1999),
  163--205.

\bibitem{GLS} C. Geiss, B. Leclerc and J. Schr\"oer, \emph{Preprojective 
algebras and cluster algebras}, preprint (2008)
{\tt arXiv:math/0804.3168}.

\bibitem{HL} D. Hernandez and B. Leclerc, {\em Cluster algebras and quantum 
affine algebras}. {\tt arXiv:0903.1452 [math.QA]}.

\bibitem{Ke}
R. Kedem, \emph{{$Q$}-systems as cluster algebras}, J. Phys. A:
  Math. Theor. 41 (2008) 194011 (16 pp).
{\tt arXiv:0712.2695
[math.RT]. }

\bibitem{KPC}
M. Kontsevich, private communication.

\bibitem{KR}
A. Kirillov and N. Reshetikhin, \emph{Formulas for the multiplicities of
  the occurrence of irreducible components in the tensor product of
  representations of simple {L}ie algebras}, Zap. Nauchn. Sem. S.-Peterburg.
  Otdel. Mat. Inst. Steklov. (POMI) \textbf{205} (1993), no.~Differentsialnaya
  Geom. Gruppy Li i Mekh. 13, 30--37, 179.
  
\bibitem{KnutsonTao} A. Knutson, T. Tao, C. Woodward,
{\em A positive proof of the Littlewood-Richardson rule using the octahedron 
recurrence}.
Electron. J. Combin. {\bf 11} (2004), Research Paper 61.

\bibitem{KNS} A. Kuniba, A. Nakanishi and J. Suzuki, {\em Functional relations 
in solvable lattice models. I. Functional relations and representation theory.} 
International J. Modern Phys. A {\bf 9} no. 30, pp 5215--5266 (1994).

\bibitem{LWZ} I. Krichever, O. Lipan, P. Wiegmann and A. Zabrodin, 
\emph{Quantum Integrable Systems and Elliptic Solutions of Classical 
Discrete Nonlinear Equations}, Comm. Math. Phys. {\bf 188} (1997) 267--304.
{\tt arXiv:hep-th/9604080. }

\bibitem{Nakajima}
H. Nakajima, \emph{{$t$}-analogs of {$q$}-characters of
  {K}irillov-{R}eshetikhin modules of quantum affine algebras}, Represent.
  Theory \textbf{7} (2003), 259--274 (electronic). 
\bibitem{N} H. Nakajima, {\em Quiver varieties and cluster algebras}. Preprint 
{\tt arXiv:0905.0002 [math.QA]}.
\bibitem{Speyer} D. Speyer, \emph{Perfect matchings and the
        octahedron recurrence}, J. Algebraic Comb. \textbf{25} No 3
      (2007) 309-348. {\tt arXiv:math/0402452 [math.CO]}.
\bibitem{Zel04} A. Zelevinsky, {\em Quantum cluster algebras}. Oberwolfach 
talk, February 2005 {tt arXiv:0502260 [math.QA]}.
\end{thebibliography}
\end{document}